\newcommand{\E}{\mathbb{E}}
\newtheorem{theorem}{Theorem}
\newtheorem{theoremA}{Theorem }
\newtheorem{corollary}[theorem]{Corollary}
\newtheorem{definition}[theorem]{Definition}
\newtheorem{example}[theorem]{Example}
\newtheorem{lemma}[theorem]{Lemma}
\newtheorem{notation}[theorem]{Notation}
\newtheorem{proposition}[theorem]{Proposition}
\newtheorem{remark}[theorem]{Remark}
\renewenvironment{proof}{\textit{Proof}}{${}\hfill\square$}
\def\~{\tilde}
\begin{document}

\title{Projective limit of a sequence of compatible weak symplectic forms on a sequence of Banach bundles and Darboux Theorem}

\author{Fernand Pelletier}

\address{Unit\'e Mixte de Recherche 5127 CNRS, Universit\'e  de Savoie Mont Blanc, Laboratoire de Math\'ematiques (LAMA),Campus Scientifique,  73370 Le Bourget-du-Lac, France}
\email{fernand.pelletier@univ-smb.fr}

\date{}
\maketitle

\begin{abstract}
 Given a projective sequence of  Banach bundles, each one provided with a of weak symplectic form, we look for conditions   under which, the corresponding sequence of  weak symplectic forms gives rise to weak symplectic form on  the projective  limit bundle.  Then we apply this results to the tangent bundle of a projective limit of Banach manifolds.
This naturally leads  to  ask about conditions under which  the Darboux Theorem is also true on the projective limit of Banach manifolds. We will  give some necessary and some sufficient conditions so that  such a result is true.
Then  we discuss why, in general,  the Moser's method can not work on projective limit of Banach  weak symplectic Banach manifolds
without very strong conditions like   Kumar 's results (\cite{Ku2}). In particular we give an example   of a projective sequence of 
 weak symplectic Banach manifolds on which the Darboux Theorem is  true on each manifold,  but  is not true  on the projective limit of these manifolds.\\ 

\tiny{{\it 2010  MSC:}  53D35, 55P35.
\\{\it Keywords:} Banach manifold, Fr\'echet manifold projective limit of Banach bundles,  Fr\'echet manifold, Fr\'echet bundles, weak symplectic form,   sequence  of compatible  weak symplectic forms,  Darboux theorem, Moser's method.}
 \end{abstract}


\section{Introduction}


In the Banach context, it is well known that a symplectic form can be strong
or weak (see Definition \ref{D_WeaklyNonDegenerateBilinearForm}). The Darboux
Theorem was firstly proved for strong  symplectic  Banach manifolds  by Weinstein (\cite{Wei}). But
Marsden (\cite{Ma2}) showed that the Darboux theorem fails for a weak
symplectic Banach manifold. However Bambusi \cite{Bam} found necessary and
sufficient conditions for the validity of Darboux theorem for a weak
symplectic Banach manifold (\textit{Darboux-Bambusi Theorem}). The proofs of all
these versions of Darboux Theorem were all established by Moser's
method.\newline
In a wider context like Fr\'{e}chet or convenient manifolds, a symplectic form
is always weak.  Recently, a new
approach to differential geometry in Fr\'{e}chet context was initiated and
developed by G. Galanis, C. T. J. Dodson, E. Vassiliou and their collaborators
in terms of projective limits of Banach manifolds (see \cite{DGV} for a
panorama of these results). In this situation, P. Kumar, in \cite{Ku2}, proves
a version of Darboux Theorem, by Moser method, for a projective sequence of weak symplectic manifolds which satisfy  the assumption of the Darboux-Bambusi Theorem
but  under very strong added conditions on this sequence. On the other hand,  a metric approach  of differential geometry 
on Fr\'echet manifold   was firstly introduced by Muller. 
This concept   gives rise to Keller-differentiable calculus as exposed in details  by Glockner  
in \cite{Glo}. In this way we can consider  the so called bounded Fr\'echet framework (cf. \cite{Mul})
 in which a  classical  implicit function Theorem is true   and a Theorem of existence of local flow can be proved (cf. \cite{Eft1}). In this context  
 Eftekharinasab  in  \cite{Eft2}, proves
a version of Darboux Theorem using  Moser's method  also  under very strong assumptions. In fact when such a Fr\'echet   manifold is also a projective limit of    Banach manifolds this result seems to recover Kumar's result.\\

More generally we can look for  conditions under which a family of weak symplectic forms on a projective sequence of Banach bundles gives rise to a weak symplectic form on the projective limit bundle:\emph{ this is the essential purpose of this paper}. \\
${}\;\;\;$ Of course this naturally  gives rise to application to projective limit of weak Banach manifold and  the problem of the existence of a Darboux theorem on the projective limit of weak symplectic Banach manifolds under the assumption of Darboux-Bambusi Theorem.\\

More precisely let $\left( \mathbb{E}_i,\ell_i^j \right) _{j\geq i}$ be a  reductive\footnote{ {\it i.e.} $\ell_i^j(\mathbb{E}_j)$ is dense in $\mathbb{E}_i$}  projective sequence of  Banach spaces and $(\omega_i) _{i \in \mathbb{N}}$ be a sequence of (linear)  weak symplectic forms $\omega_i$ on $\mathbb{E}_i$. We say that $(\omega_i) _{i \in \mathbb{N}}$ is a   sequence of compatible symplectic forms if each ${\ell_i^{i+1}}$ satisfies
\begin{equation*}\label{isometryii+1}
\ker\ell_i^{i+1} \cap (\ker\ell_i^{i+1})^\perp=\{0\}  \textrm{ and } (\ell_i^{i+1})^*\omega_{i}=\omega_{i+1}\textrm{ in restriction to}  (\ker \ell_i^{i+1})^{\perp}
\end{equation*}
where $(\ker\ell_i^{i+1})^\perp$ is the orthogonal of $\ker\ell_i^{i+1}$ relatively to $\omega_{i+1}$ (cf. Defintion \ref{D_IsometricLinear2Forms})

Now consider  a reductive  projective sequence  of Banach bundles  $\left( E_i,\lambda_i^j \right)  _{\underleftarrow{i}}$ over a projective sequence $\left( M_i,\delta_i^j \right)  _{j\geq i}$ of Banach  manifolds and let $(\omega_i)_{i \in \mathbb{N}}$ be a sequence   of weak symplectic forms $\omega_i$ on $E_i$. We say that  $(\omega_i)_{i \in \mathbb{N}}$ is a sequence of compatible symplectic forms if, for any $x_i\in M_i$ and $i\in \mathbb{N}$, the sequence $\left((\omega_i)_{x_i}\right)_ {i \in \mathbb{N}}$ is a  sequence of compatible  (linear)  weak symplectic forms on the projective sequence $\left( \pi_i^{-1}(x_i),(\lambda_i^j)_{x_j} \right) _{j\geq i}$ of Banach spaces (cf. Definition \ref{D_IsometricWeakSymplecticOnProjectiveBanachBundles})

In this  context ,  we have (cf. Theorem \ref{T_CoherentWeakSymplecticBanachManifold} and Corollary \ref{C_CharacterizationOfWeakSymplectic2FormOnProjectiveLimitOfSubmersiveSequence}):
\begin{theoremA}\label{A1} Consider  a  reductive\footnote{that is the projective sequence of typical fiber $\left( \mathbb{E}_i,\overline{\lambda_i^j}\right)$ is a reduced  projective sequence of Banach spaces} projective sequence $\left(E _i,\lambda_i^j\right)  _{\underleftarrow{i}}$ of Banach bundles over a projective sequence of Banach manifolds $\left( M_i,\delta_i^j\right)  _{j\geq i}$ .
\begin{enumerate}
\item
Let $(\omega_i)_{i \in \mathbb{N}}$ be a sequence of compatible   weak symplectic forms on    $\left(E _i,\lambda_i^j\right)  _{\underleftarrow{i}}$. Then  $\omega=\underleftarrow{\lim}\omega_i$ is a well defined  weak symplectic form on the Fr\'echet bundle $E=\underleftarrow{\lim}E_i$  over $M=\underleftarrow{\lim}M_i$.
\item
Conversely, let $\omega$ be  a weak symplectic form  on a projective limit bundles  $(E=\underleftarrow{\lim}E_i,\pi=\underleftarrow{\lim}\pi_i, M=\underleftarrow{\lim} M_i)$ of a submersive\footnote{cf. Definition \ref{D_StrongProjectiveLimitOfBanachBundle} }  projective sequence of Banach bundles$\left(E _i,\lambda_i^j\right)  _{\underleftarrow{i}}$.
  Assume that  for each $x=\underleftarrow{\lim}x_i$,    the map $(\lambda_i)_x:\pi^{-1}(x) \to \pi_i^{-1}(x_i)$ is a  symplectic  submersion   \footnote{cf Definition \ref{D_SymplecticSubmersion}}.
Then $\omega$  induces  a weak symplectic $2$-form $\omega_i$ on $E_i$ which  gives rise to a  family of compatible weak symplectic forms.  Moreover, the  $2$-form on $E$  defined by this sequence $(\omega_i)$ is precisely the given $2$-form $\omega$.
\item
A $2$-form $\omega$ on a Fr\'echet bundle, projective limit  $(E=\underleftarrow{\lim}E_i, \pi=\underleftarrow{\lim}\pi_i, M=\underleftarrow{\lim}M_i)$ of a submersive sequence of Banach fibre bundles $\left( E_i,  \lambda_i^j\right)  _{\underleftarrow{i}}$ is a weak symplectic form if and only if there exists a sequence of coherent  weak symplectic forms $(\omega_i)_{i \in \mathbb{N}}$ on $E_i$ such that $\omega=\underleftarrow{\lim}\omega_i$.\\
\end{enumerate}
\end{theoremA}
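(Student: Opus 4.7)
For part (1), my plan is to put, for any $x=\underleftarrow{\lim}x_i\in M$ and any coherent sequences $X=(X_i),Y=(Y_i)\in \pi^{-1}(x)=\underleftarrow{\lim}\pi_i^{-1}(x_i)$, $\omega_x(X,Y):=\omega_{i,x_i}(X_i,Y_i)$, and check independence of the chosen index $i$. The crux is the identity $\omega_{i+1,x_{i+1}}(X_{i+1},Y_{i+1})=\omega_{i,x_i}(X_i,Y_i)$. The compatibility hypothesis $(\ell_i^{i+1})^*\omega_i=\omega_{i+1}$ on $(\ker\ell_i^{i+1})^\perp$ gives this identity outright for pairs lying in that subspace; the subtlety is that an arbitrary coherent sequence need not have its entries in $(\ker\ell_i^{i+1})^\perp$, and here the condition $\ker\ell_i^{i+1}\cap(\ker\ell_i^{i+1})^\perp=\{0\}$ of Definition \ref{D_IsometricLinear2Forms} is what rules out a pathological contribution from the kernel. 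Smoothness of the resulting $\omega$ on $E=\underleftarrow{\lim}E_i$ follows at once from the projective-limit bundle structure, since $\omega$ agrees in any local trivialization with $\underleftarrow{\lim}\omega_i$. For weak non-degeneracy, if $\omega_x(X,\cdot)=0$, then $\omega_{i,x_i}(X_i,\cdot)$ vanishes on the image of $\pi^{-1}(x)\to \pi_i^{-1}(x_i)$; the reductivity hypothesis makes this image dense, and continuity of $\omega_{i,x_i}(X_i,\cdot)$ together with weak non-degeneracy of $\omega_i$ then forces $X_i=0$ for all $i$.

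For part (2), the hypothesis that each $(\lambda_i)_x:\pi^{-1}(x)\to \pi_i^{-1}(x_i)$ is a symplectic submersion (Definition \ref{D_SymplecticSubmersion}) is designed precisely so that $\omega$ descends along $\lambda_i$: it guarantees $\ker(\lambda_i)_x$ is $\omega_x$-isotropic, whence $\omega_i:=(\lambda_i)_*\omega$ is an unambiguously defined and weakly non-degenerate $2$-form on $\pi_i^{-1}(x_i)$. Smoothness of each $\omega_i$ on $E_i$ then follows from the submersiveness of the bundle tower. Compatibility of $(\omega_i)_{i\in\mathbb{N}}$ is obtained fibrewise from the tower identity $\ell_i^{i+1}\circ \lambda_{i+1}=\lambda_i$: pulling $\omega_i$ back via $\ell_i^{i+1}$ and comparing with $\omega_{i+1}$ on $(\ker\ell_i^{i+1})^\perp$ yields the required equality, while the disjointness clause $\ker\cap\ker^\perp=\{0\}$ is inherited from the non-degeneracy of $\omega$. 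Evaluating on a coherent pair $(X,Y)$ finally identifies the projective-limit form built from $(\omega_i)$ by part (1) with the original $\omega$.

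Part (3) is a direct packaging of (1) and (2) into an ``iff'': the sufficient direction is (1) verbatim; the necessary direction follows from (2) once one notes that the mere existence of a weak symplectic $\omega$ on the projective limit of a submersive sequence forces each $(\lambda_i)_x$ to be a symplectic submersion in the sense required.

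The main obstacle I anticipate is the well-definedness step in part (1): the compatibility equation is available only on $(\ker\ell_i^{i+1})^\perp$, yet no canonical Banach-space complement to $\ker\ell_i^{i+1}$ is available, so the argument must draw the needed identity out of the combined interplay between $\ker\cap \ker^\perp=\{0\}$, reductivity, and the defining property of the $\omega_{i+1}$-orthogonal. A secondary technical point in (2) is to verify that the symplectic submersion condition propagates weak non-degeneracy downward without introducing new kernel on $E_i$, which I expect to reduce to showing that the induced pairing on $\pi^{-1}(x)/\ker(\lambda_i)_x$ separates points.
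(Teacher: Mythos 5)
Your plan for Part (1) hinges on the identity $\omega_{i+1,x_{i+1}}(X_{i+1},Y_{i+1})=\omega_{i,x_i}(X_i,Y_i)$ for coherent threads, and that identity is false in general; worse, the clause $\ker\ell_i^{i+1}\cap(\ker\ell_i^{i+1})^{\perp}=\{0\}$ works against you rather than for you. By Proposition \ref{P_IsometricProperties} (2) the compatibility hypothesis forces $\mathbb{E}_{i+1}=\ker\ell_i^{i+1}\oplus(\ker\ell_i^{i+1})^{\perp}$ (so the canonical complement you say is unavailable is in fact the symplectic orthogonal itself). Writing $X_{i+1}=k_X+h_X$, $Y_{i+1}=k_Y+h_Y$ accordingly, the cross terms vanish by definition of the orthogonal and the $h$-terms reproduce $\omega_i(X_i,Y_i)$, so $\omega_{i+1}(X_{i+1},Y_{i+1})=\omega_i(X_i,Y_i)+\omega_{i+1}(k_X,k_Y)$. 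The disjointness clause says exactly that $\omega_{i+1}$ restricted to $\ker\ell_i^{i+1}$ is \emph{non-degenerate}, so the extra term is generically nonzero (Remark \ref{R_IsometricAllij} (2) makes the same point: requiring $\omega_{i+1}=(\ell_i^{i+1})^{*}\omega_i$ globally would kill non-degeneracy). Hence $i\mapsto\omega_i(X_i,Y_i)$ is not a stabilized value but a sequence of partial sums, which is why the paper defines $\omega$ as $\underleftarrow{\lim}\omega_i(u_i,v_i)$ and proves well-definedness via Lemma \ref{L_EProductOfBanachSpaces}: each $\mathbb{E}_j$ is split into blocks mapping densely onto the kernels $\mathbb{K}_l=\ker\ell_{l-1}^{l}$, $\mathbb{E}$ is embedded with dense range into $\prod_l\mathbb{K}_l$, and $\omega$ is exhibited as the block-diagonal sum of the $\omega_{\mathbb{K}_l}$; non-degeneracy is then checked block by block. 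Your density argument for non-degeneracy also presupposes the false identity, so Part (1) does not close as proposed.

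In Part (2) the descent mechanism is likewise reversed. A symplectic submersion in the sense of Definition \ref{D_SymplecticSubmersion} means $\pi^{-1}(x)=\ker(\lambda_i)_x\oplus(\ker(\lambda_i)_x)^{\perp}$, which forces the restriction of $\omega_x$ to $\ker(\lambda_i)_x$ to be non-degenerate --- the opposite of isotropic (cf. Remark \ref{R_IsometricAllij} (3) and Remark \ref{R_CanonicalProjectionSymplecticSubmersion}). Even if the kernel were isotropic, a pushforward $(\lambda_i)_{*}\omega$ would require $\omega_x(k,\cdot)=0$ for every $k$ in the kernel, contradicting non-degeneracy of $\omega$. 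The correct construction, and the one the paper uses, is to restrict $\omega$ to $(\ker(\lambda_i)_x)^{\perp}$ and transport it through the isomorphism onto $\pi_i^{-1}(x_i)$; one must then separately establish smoothness of $x_i\mapsto(\omega_i)_{x_i}$ by showing that $\ker\lambda_i$ and its symplectic orthogonal are subbundles in adapted trivializations, and invoke Proposition \ref{P_FamilySymplecticSubmersion} for compatibility. Finally, your ``once one notes'' step in Part (3) --- that any weak symplectic form on a submersive limit automatically makes each $(\lambda_i)_x$ a symplectic submersion --- is precisely the assertion that the symplectic orthogonal of the kernel is a topological complement, which is not automatic for weak forms and cannot be dismissed without argument.
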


As corollary we obtain (cf. Theorem \ref{C_CoherentWeakSymplecticBanachManifold}):\\

\begin{theoremA}\label{A2}${}$
\begin{enumerate}
\item
Let $\left( M_i,\lambda_i^j\right)  _{j\geq i}$ be a reduced sequence of Banach manifolds and $(\omega_i)_{i \in \mathbb{N}}$ a sequence of compatible  weak symplectic forms. Then  $\omega=\underleftarrow{\lim}\omega_i$ is a weak symplectic form on  $M=\underleftarrow{\lim}M_i$
\item
Let $\omega$ be a $2$-form on a projective limit $M=\underleftarrow{\lim}M_i$  of a submersive sequence of manifolds $\left( M_i,\delta_i^j\right)  _{j\geq i}$. Then $\omega $ is a weak symplectic form if and only if each $T_x\delta_i: T_xM\to T_{x_i}M_i$ is  symplectic   submersion for each $i\in \mathbb{N}$.\\
\end{enumerate}
\end{theoremA}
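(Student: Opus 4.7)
The plan is to derive Theorem A2 as a direct corollary of Theorem A1 applied to the projective sequence of tangent bundles. First, I would verify that if $\left(M_i,\delta_i^j\right)_{j\geq i}$ is a reduced projective sequence of Banach manifolds, then the associated sequence of tangent bundles $\left(TM_i,T\delta_i^j\right)_{\underleftarrow{i}}$ is a reductive projective sequence of Banach bundles over $(M_i,\delta_i^j)$, whose projective limit is the tangent bundle $TM$ of $M=\underleftarrow{\lim} M_i$. Moreover, when each $\delta_i^j$ is a Banach submersion, the corresponding tangent sequence is submersive in the sense required by Theorem A1.

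For part (1), a compatible sequence of weak symplectic forms $(\omega_i)_{i\in\mathbb{N}}$ on the manifolds $M_i$ is, by the very definitions recalled in the excerpt, a compatible sequence of weak symplectic forms on the tangent bundles $TM_i$. Applying Theorem A1 part (1) yields a well-defined pointwise weak symplectic form $\omega=\underleftarrow{\lim}\omega_i$ on the Fr\'echet bundle $TM$. To conclude that $\omega$ is a weak symplectic form on the manifold $M$ it only remains to check $d\omega=0$, which I would obtain by evaluating $d\omega$ on triples of projectable vector fields $X=\underleftarrow{\lim}X_i$, $Y=\underleftarrow{\lim}Y_i$, $Z=\underleftarrow{\lim}Z_i$ and using compatibility together with the Cartan formula on each $M_i$ to reduce the computation level by level.

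For part (2), I would apply Theorem A1 part (3) to the submersive tangent sequence, which asserts that a $2$-form $\omega$ on $TM$ is a pointwise weak symplectic form if and only if it arises as $\omega=\underleftarrow{\lim}\omega_i$ for a compatible sequence of weak symplectic forms on $(TM_i)$. Combined with part (2) of Theorem A1, the existence of such a sequence is equivalent to each $(\lambda_i)_x=T_x\delta_i \colon T_xM\to T_{x_i}M_i$ being a symplectic submersion. The forms $\omega_i$ produced this way descend to closed $2$-forms on the manifolds $M_i$ via the symplectic submersion property, and one recovers the announced characterization on the level of manifolds.

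The only step beyond a formal transcription of Theorem A1 is the interplay between closedness $d\omega=0$ and the projective limit construction; namely, that $d\underleftarrow{\lim}\omega_i=\underleftarrow{\lim}d\omega_i$, so that closedness passes both to and from the limit. I expect this to be the main technical point, but it should reduce to the naturality of $d$ under pullback by the canonical projections $\delta_i\colon M\to M_i$ and the fact that the Cartan formula for $d\omega$ at a point of $M$ depends only on finitely many derivatives, which are themselves projective limits of the corresponding derivatives on the $M_i$.
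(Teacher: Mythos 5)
Your reduction of both parts to Theorem \ref{T_CoherentWeakSymplecticBanachManifold} applied to $E_i=TM_i$ is exactly the paper's starting point, and the way you assemble part (2) from points (2) and (3) of that theorem also matches the paper. The gap is in the one step you yourself flag as the technical heart: the closedness of $\omega$. You propose to obtain $d\,\underleftarrow{\lim}\omega_i=\underleftarrow{\lim}d\omega_i$ from ``naturality of $d$ under pullback by the canonical projections $\delta_i$''. But $\omega$ is \emph{not} $\delta_i^*\omega_i$ for any $i$: compatibility only gives $\delta_i^*\omega_i=\omega$ in restriction to $(\ker T\delta_i)^{\perp}$, and Remark \ref{R_IsometricAllij} (2) shows that requiring $\omega_j=(\ell_i^j)^*\omega_i$ globally would force $\omega_j$ to be degenerate whenever $\ker\ell_i^j\neq\{0\}$. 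Consequently the scalar functions $x\mapsto(\omega_i)_{x_i}(X_i,Y_i)$ are \emph{not} intertwined by the bonding maps, hence do not form a projective system of functions, and Proposition \ref{P_ProjectiveLimitsOfDifferentialMaps} cannot be invoked to differentiate the pointwise limit $\omega(X,Y)=\underleftarrow{\lim}\omega_i(X_i,Y_i)$ term by term. Interchanging $d$ with this limit is precisely what has to be proved; it is not a formal naturality statement, and your sketch does not supply it.

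The paper closes this gap by a different mechanism: using Lemma \ref{L_EProductOfBanachSpaces} it identifies the local model with a product $\prod_{l}\mathbb{M}'_l$ of the kernels of consecutive bonding maps, in which the local representative $\Omega$ of $\omega$ is block diagonal --- $\Omega(X_1,X_2)=0$ for vector fields tangent to distinct factors, and $\Omega(X_1,X_2)=\overline{\Omega_l}(X_1,X_2)$ for fields tangent to the $l$-th factor, where each $\overline{\Omega_l}$ is closed because it lives at a fixed finite level. Closedness of $\Omega$ then follows block by block, and the same decomposition yields the converse direction in part (2). To repair your argument you would need to evaluate $d\omega$ on vector fields tangent to the individual factors $\mathbb{M}'_l$ of this splitting (thereby reconstructing the paper's computation) rather than on arbitrary projectable fields, for which the differentiation-under-the-limit step remains unjustified.
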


 Now, in the context of Theorem \ref{A2} Point (1),   assume that each weak symplectic manifold $(M_i ,\omega_i)$ \emph{satisfies  the assumptions of Darboux-Bambusi Theorem (cf. Theorem \ref{T_localDarboux})} for each $i\in \mathbb{N}$, then it follows that  \emph{ the same is true for the projective limit $(M=\underleftarrow{\lim}M_i, \omega=\underleftarrow{\lim}\omega_i)$} (cf. Theorem \ref{T_omega_nCoherentM}).\\
${}\;$ Since the Darboux-Bambusi Theorem is then true for each $(M_i ,\omega_i)$ it seems natural  to look for the same result for $(M,\omega)$. A partial answer is given in Theorem \ref{T_equivDarbouxProjective}.\\

 The last section  of this paper is devoted to a discussion on how the Moser's method in the previous context can be applied. In particular Theorem \ref{T_Uniformly BoundedSymplectic} gives (very strong ) sufficient conditions under which the Moser's method can be applied and which is a kind of generalization of Kumar's result (\cite{Ku1}).\\
${}\;$ \emph{ Unfortunately such kind of results require so strong assumptions, that it seems there is no concrete applications outside  elementary examples. }\\
 Finally, we give some examples for which the Darboux-Bambusi  theorem is true and an example for which the Darboux-Bambusi Theorem is  true on each manifold,  but  is not true  on the projective limit of these manifolds. Note that this last section  is analog to the same type of discussion  in \cite{Pe} in the context of  direct limit of weak symplectic manifolds.\\

This work is self contained.

\noindent In section 2,  after a survey on  known results on symplectic forms on a Banach space ($\S$2.1)
we look for properties of  a  sequence of  compatible (linear) symplectic forms on a projective sequences of Banach spaces ($\S$ 2.2). The precise context of 
Theorem \ref{A1} (resp. Theorem \ref{A2}) can be found in $\S$ 2.3 (resp. $\S$2.4). The proofs of all these results take place in $\S$2.5.\\
Section 3 is devoted to  show that under  the assumption of Theorem \ref{A2}, the  Darboux-Bambusi assumptions which are satisfied for a projective limit of weak symplectic manifold are also valid on its projective limit. The first subsection recall the Moser's method and the Darboux-Bambusi Theorem. In the next subsection,  under assumption of Theorem \ref{A1} (1),   for  such a projective limit of Banach bundles which satisfy a generalization of Darboux-Bambusi Theorem  assumptions, we show that  its  projective limit has the same properties. The last section is a discussion on the  problem of existence of Darboux charts on a strong reduced projective sequence of Banach manifolds. Sufficient conditions are given in the first subsection.The announced discussion is developed in $\S$4.2. Examples and contre-example about the existence of a projective limit of Darboux charts are given in $\S$4.3.
Finally we end this paper by a series of Appendices which sumrize  all the definitions and properties on projective limits needed in this paper.\\


\section{Projective limit of a coherent  sequence of  weak symplectic forms on a projective sequence of Banach bundle}
\label{__ProjectiveLimitOfProjectiveSequenceOfWeakSymplectiqueBanachBundle}

\subsection{Symplectic forms on Banach space}

\label{linearsymplectic}

In this section we  recall some well known results on linear symplectic forms on a Banach space (cf. for instance \cite{Pe}):

\begin{definition}
\label{D_WeaklyNonDegenerateBilinearForm}Let $\mathbb{E}$ be a Banach space. A
bilinear form $\omega$ is said to be weakly non degenerate if
$\left(  \forall Y\in\mathbb{E},\ \omega\left(  X,Y\right)  =0\right)
\quad\Longrightarrow\quad X=0$.

\end{definition}

Classically, to $\omega$ is associated the linear map 

$\;\;\;\; \omega^{\flat}:  \mathbb{E}  \longrightarrow  \mathbb{E}^{\ast}$
\noindent defined by $ \left(\omega^{\flat}(X)\right)(Y)=\omega\left(  X,Y\right),\;:\;  \forall Y\in \mathbb{E}$.

Clearly,  $\omega$ is weakly non degenerate if and only if  $\omega^{\flat}$ is
injective. \\
 The $2$-form $\omega$ is  called  \textit{strongly nondegenerate} if $\omega^\flat$ is an isomorphism.


\bigskip

A fundamental result in finite dimensional linear symplectic space is the
following:

If $\omega$ is a symplectic form on a finite dimensional vector space
$\mathbb{E}$, there exists a vector space $\mathbb{L}$ and an isomorphism
$A:\mathbb{E}\rightarrow\mathbb{L}\oplus\mathbb{L}^{\ast}$ such that
$\omega=A^{\ast}\omega_{_{\mathbb{L}}}$ where
\begin{equation}
\omega_{_{\mathbb{L}}}((u,\eta),(v,\xi)=<\eta,v>-<\xi,u> \label{Darboux}%
\end{equation}
This result is in direct relation with the notion of \textit{Lagangian
subspace} which is a fundamental tool in the finite dimensional symplectic framework.

\bigskip

In the Banach framework, let $\omega$ be a weak symplectic form on a Banach space.

A subspace $\mathbb{F}$ is \textit{isotropic} if $\omega(u,v)=0$ for all
$u,v\in\mathbb{F}$. An isotropic subspace is always closed. \newline If
$\mathbb{F}^{\perp_{\omega}}=\{w\in\mathbb{E}\;:\forall u\in\mathbb{F}%
,\ \omega(u,v)=0\;\}$ is the orthogonal symplectic space of $\mathbb{F}$, then
$\mathbb{F}$ is isotropic if and only if $\mathbb{F}\subset\mathbb{F}%
^{\perp_{\omega}}$ and is \textit{maximal isotropic if }$\mathbb{F}%
=\mathbb{F}^{\perp_{\omega}}$\textit{.} Unfortunately, in the Banach
framework, a maximal isotropic subspace $\mathbb{L}$ can be not supplemented.
Following Weinstein's terminology (\cite{Wei}), an isotropic space
$\mathbb{L}$ is called a \textit{Lagrangian space} if there exists an
isotropic space $\mathbb{L}^{\prime}$ such that $\mathbb{E}=\mathbb{L}%
\oplus\mathbb{L}^{\prime}$. Since $\omega$ is strong non degenerate, this
implies that $\mathbb{L}$ and $\mathbb{L}^{\prime}$ are maximal isotropic and
then are Lagrangian spaces (see \cite{Wei}).

Unfortunately, in general, for a given symplectic structure, Lagrangian
subspaces need not exist (cf. \cite{KaSw}). Even for a strong
symplectic structure on Banach space which is not Hilbertizable, the non
existence of Lagrangian subspaces is an open problem to our knowledge.
Following \cite{Wei}, a symplectic form $\omega$ on a Banach space
$\mathbb{E}$ is a \textit{Darboux (linear) form} if there exists a Banach
space $\mathbb{L}$ and an isomorphism $A:\mathbb{E}\rightarrow\mathbb{L}%
\oplus\mathbb{L}^{\ast}$ such that $\omega=A^{\ast}\omega_{\mathbb{L}}$ where
$\omega_{\mathbb{L}}$ is defined in (\ref{Darboux}). Note that in this case
$\mathbb{E}$\textit{ must be reflexive}.

\bigskip

Let $\mathbb{E}$ be a Banach space provided with a norm $||\;||$. We consider
a symplectic form $\omega$ on $\mathbb{E}$ and let $\omega^{\flat}%
:\mathbb{E}\rightarrow\mathbb{E}^{\ast}$ be the associated bounded linear
operator. Following \cite{Bam} and \cite{Ku1}, on $\mathbb{E}$, we consider
the norm $||u||_{\omega}=||\omega^{\flat}(u)||^{\ast}$ where $||\;||^{\ast}$
is the canonical norm on $\mathbb{E}^{\ast}$ associated to $||\;||$. Of
course, we have $||u||_{\omega}\leq||\omega^{\flat}||^{\mathrm{op}}.||u||$
(where $||\omega^{\flat}||^{\mathrm{op}}$ is the norm of the operator
$\omega^{\flat}$) and so the inclusion of the normed space $(\mathbb{E}%
,||\;||)$ in $(\mathbb{E},||\;||_{\omega})$ is continuous. We denote by
$\widehat{\mathbb{E}}$ the Banach space which is the completion of
$(\mathbb{E},||\;||_{\omega})$. Since $\omega^{\flat}$ is an isometry from
$(\mathbb{E},||\;||_{\omega})$ to its range in $\mathbb{E}^{\ast}$, we can extend
$\omega^{\flat}$ to a bounded operator $\widehat{\omega}^{\flat}$ from
$\widehat{\mathbb{E}}$ to $\mathbb{E}^{\ast}$. Assume that $\mathbb{E}$ is
\textit{reflexive}. Therefore $\hat{\omega}^{\flat}$ is an isometry between
$\widehat{\mathbb{E}}$ and $\mathbb{E}^{\ast}$ (\cite{Bam} Lemma 2.7).
Moreover, $\omega^{\flat}$ can be seen as a bounded linear operator from
$\mathbb{E}$ to $\widehat{\mathbb{E}}^{\ast}$ and is in fact an isomorphism
(\cite{Bam} Lemma 2.8). Note that since $\widehat{\mathbb{E}}^{\ast}$
\textit{is reflexive}, this implies that $\hat{\mathbb{E}}$ \textit{is also
reflexive.} 

\begin{remark}\label{indnorm} If ${||\;||'}$ is an equivalent norm of $||\;||$  on $\E$, then the corresponding $(||\;||')^*$ and $||\;||^*$ are also equivalent norm on $\E^*$ and so ${||\;||'}_\omega$ and $||\;||_\omega$ are equivalent norms on $\E$  and so the completion $\widehat{\E}$   depends only of Banach structure on $\E$  defined by equivalent the norms on $\E$
\end{remark}

\subsection{Case of projective limit of  Banach spaces}\label{___CaseOf Banachspaces}
Let $\omega$ be  a skew-symmetric  bilinear form on a Banach space $\mathbb{E}$ and $\mathbb{K}$ a Banach subspace of $\mathbb{E}$. Recall that the \emph{$\omega$-orthogonal subspace }$\mathbb{K}^{\perp_\omega}$ is defined by
\[
\mathbb{K}^{\perp_\omega}=\{x\in \mathbb{E},\;:\;\forall y\in \mathbb{K}, \, \omega(x,y)=0 \}.
\]
When there is no ambiguity this set is simply denoted  $\mathbb{K}^{{\perp}}$. Note that since $\omega$ is  skew-symmetric  $\mathbb{K}^{\perp}=\{x\in \mathbb{E}:\;\forall y\in \mathbb{K}, \, \omega(y,x)=0 \}$ and so $(\mathbb{K}^\perp)^\perp=\mathbb{K}$.\\
If $\mathbb{K}^0=\{\xi\in \mathbb{E}^*:\;\forall u\in \mathbb{K},\; \xi(u)=0\}$ is the annihilator\index{annihilator} of $\mathbb{K}$, then $\mathbb{K}^\perp=(\omega^\flat)^{-1}(\mathbb{K}^0)$.\\
Given  two  Banach subspaces  $\mathbb{K}$ and $ \mathbb{K}'$  of $\mathbb{E}$, the following relations are classical:
\begin{itemize}
\item[--]
If  $\mathbb{K}\subset \mathbb{K}'$ then  ${\mathbb{K}'}^{{\perp}}\subset \mathbb{K}^{{\perp}}$ and, in particular, for any subspace $\mathbb{K}$, $\mathbb{E}^{{\perp}}\subset \mathbb{K}^{{\perp}}$ .
\item[--]
$(\mathbb{K}+\mathbb{K}' )^\perp=\mathbb{K}^\perp\cap {\mathbb{K}' }^\perp$.
\item[--]
$(\mathbb{K}\cap\mathbb{K}' )^\perp=\mathbb{K}^\perp+{\mathbb{K}' }^\perp$. 	
\end{itemize}

Let $\omega$  (resp. $\omega'$) be  a skew-symmetric bilinear form on a Banach space $\mathbb{E}$ (resp. $\mathbb{E}'$) and  $\ell: \mathbb{E}\to \mathbb{E}'$ a continuous map. By analogy with the terminology for Finsler geometry  (cf. \cite{A-PD})  we introduce

\begin{definition}
\label{D_IsometryBetweenBilinearForms}  We  say that  $\ell$  is a weak    isometry  between $\omega$ and $\omega'$ if $\ell(E)$ is dense in $\mathbb{E}'$ and we have:
\begin{eqnarray}
\ker\ell \cap (\ker\ell)^\perp=\{0\}  \textrm{ and } \ell^*\omega'={\omega} \textrm{ in restriction to }  (\ker \ell)^{\perp}
\end{eqnarray}
\end{definition}
Note that the condition "$\ker\ell \cap (\ker\ell)^\perp=\{0\} $"  is equivalent to the condition "the restriction of $\omega$ to $(\ker\ell)^\perp$ is non degenerate".
\begin{proposition}
\label{P_IsometricProperties}
Let $\omega$  (resp. $\omega'$) be   a  weak skew symmetric form on a Banach space $\mathbb{E}$ (resp. $\mathbb{E}'$) and let $\ell: \mathbb{E}\to \mathbb{E}'$  be a continuous map. We set  $\mathbb{K}=(\ker \ell)$ and denote by  $\overline{\omega}$ the restriction of $\omega$ to   $\mathbb{K}^\perp$. We have the following properties:
\begin{enumerate}
\item
If $\ell $ is a weak  isometry between $\omega$ and $\omega'$, then $\overline{\omega}$ and $\omega'_{| \ell(\mathbb{E})}$ are  non degenerate, and $\ker \ell^*\omega'=\ker\ell$.
\item
If $\omega$ and $\overline{{\omega}}$ are  non degenerate,  $\ell$ is a weak   isometry  between $\omega$ and $\omega'$  if and only if $\ell^*\omega'={\omega}$ on ${ (\ker \ell)^{\perp}}$ and, in this case,  the restriction of $\omega'$ to $\ell(\mathbb{E})$ is  non degenerate.
\item
Let $\omega''$ be a skew symmetric bilinear form on a Banach space $\mathbb{E}''$ and $\ell':\mathbb{E}'\to\mathbb{E}''$ a continuous linear map. If $\ell$ (resp. $\ell'$) is a weak   isometry between $\omega$ and $\omega'$ (resp. $\omega'$ and $\omega''$)   then   $\ell'\circ \ell$ is a weak   isometry  between $\omega$ and $\omega''$.\\
\end{enumerate}
\end{proposition}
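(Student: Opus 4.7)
The plan is to handle the three parts in sequence, relying throughout on two basic tools: the involutivity $(\mathbb{K}^\perp)^\perp=\mathbb{K}$ recalled above, and the combination of density of $\ell(\mathbb{E})$ in $\mathbb{E}'$ (part of the definition of weak isometry) with the weak non-degeneracy of $\omega'$.

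For (1), non-degeneracy of $\overline{\omega}$ is immediate: if $x\in\mathbb{K}^\perp$ annihilates $\mathbb{K}^\perp$ under $\omega$, then $x\in(\mathbb{K}^\perp)^\perp=\mathbb{K}$, and the standing assumption $\mathbb{K}\cap\mathbb{K}^\perp=\{0\}$ forces $x=0$. The identity $\ker\ell^*\omega'=\ker\ell$ and the non-degeneracy of $\omega'_{|\ell(\mathbb{E})}$ follow by a density/continuity argument: if $\omega'(\ell(x),\ell(z))=0$ for every $z\in\mathbb{E}$, approximating an arbitrary $w\in\mathbb{E}'$ by $\ell(z_n)$ and passing to the limit gives $\omega'(\ell(x),w)=0$, hence $\ell(x)=0$ by weak non-degeneracy of $\omega'$.

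For (2), the forward implication is tautological. For the converse, assuming $\omega$ and $\overline{\omega}$ non-degenerate and $\ell^*\omega'=\omega$ on $\mathbb{K}^\perp$, any $x\in\ker\ell\cap\mathbb{K}^\perp$ satisfies $\omega(x,y)=\omega'(\ell(x),\ell(y))=0$ for every $y\in\mathbb{K}^\perp$, so $x=0$ by non-degeneracy of $\overline{\omega}$; the non-degeneracy of $\omega'|_{\ell(\mathbb{E})}$ is then a direct consequence of~(1).

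For (3), set $L=\ell'\circ\ell$, $\mathbb{K}'=\ker\ell'$, and $\mathbb{L}=\ker L=\ell^{-1}(\mathbb{K}')$, so that $\mathbb{K}\subseteq\mathbb{L}$ and hence $\mathbb{L}^\perp\subseteq\mathbb{K}^\perp$. Density of $L(\mathbb{E})$ in $\mathbb{E}''$ follows from a diagonal extraction applied to the two density hypotheses. The key technical step is the inclusion $\ell(\mathbb{L}^\perp)\subseteq(\mathbb{K}')^\perp$ inside $\mathbb{E}'$. Granting it, on the one hand any $x\in\mathbb{L}\cap\mathbb{L}^\perp$ produces $\ell(x)\in\mathbb{K}'\cap(\mathbb{K}')^\perp=\{0\}$ by the weak-isometry property of $\ell'$, forcing $x\in\mathbb{K}\cap\mathbb{K}^\perp=\{0\}$; on the other hand, for $x,z\in\mathbb{L}^\perp$, the chain $L^*\omega''(x,z)=\omega'(\ell(x),\ell(z))=\omega(x,z)$ follows by applying successively the pullback identity of $\ell'$ (using $\ell(x),\ell(z)\in(\mathbb{K}')^\perp$) and of $\ell$ (using $x,z\in\mathbb{K}^\perp$).

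The main obstacle is therefore the inclusion $\ell(\mathbb{L}^\perp)\subseteq(\mathbb{K}')^\perp$. My strategy is to fix $x\in\mathbb{L}^\perp$ and $y'\in\mathbb{K}'$, write $y'=\lim\ell(y_n)$ by density, and attempt to replace each $y_n$ by an element of $y_n+\mathbb{K}$ lying in $\mathbb{L}$, so that $\omega(x,y_n)=0$ by $x\in\mathbb{L}^\perp$; the identity $\omega'(\ell(x),\ell(y_n))=(\ell^*\omega')(x,y_n)$ together with the pullback property of $\ell$ on $\mathbb{K}^\perp$ will then deliver the desired vanishing in the limit. The delicate technical issue is to justify such a $\mathbb{K}$-modification in the Banach setting (essentially a question of whether classes in $\mathbb{E}/\mathbb{K}$ admit $\mathbb{L}$-representatives, or equivalently whether $\ell(\mathbb{L})$ is dense in $\mathbb{K}'$), and this is where I expect the bulk of the effort to lie.
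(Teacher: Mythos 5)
Parts (1) and (2) of your proposal are correct. Your density argument in (1) — deducing both $\ker\ell^{*}\omega'=\ker\ell$ and the non-degeneracy of $\omega'_{|\ell(\mathbb{E})}$ from the density of $\ell(\mathbb{E})$ in $\mathbb{E}'$ and the weak non-degeneracy of $\omega'$ — is cleaner and more robust than the paper's, which instead pushes $\overline{\omega}$ forward through the restriction $\overline{\ell}$ of $\ell$ to $(\ker\ell)^{\perp}$ (and therefore implicitly needs $\mathbb{E}=\mathbb{K}\oplus\mathbb{K}^{\perp}$ already at that stage). The architecture of your part (3) also matches the paper's: with $\mathbb{L}=\ker(\ell'\circ\ell)$, everything reduces to showing $\ell(\mathbb{L}^{\perp})\subseteq(\ker\ell')^{\perp}$ and then chaining the two pullback identities; the paper does the same thing in the guise of a decomposition $(\ker\ell)^{\perp}=\overline{\mathbb{K}}\oplus\overline{\mathbb{H}}$ with $\overline{\mathbb{H}}=\overline{\ell}^{-1}\bigl((\ker\ell')^{\perp}\cap\ell(\mathbb{E})\bigr)$ identified with $\mathbb{L}^{\perp}$.

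The gap is the one you flag yourself, and your proposed repair does not close it. What the pullback identity for $\ell$ actually yields, for $x\in\mathbb{L}^{\perp}$, is $\omega'(\ell(x),w)=0$ for all $w\in\ell(\mathbb{L})=\ker\ell'\cap\ell(\mathbb{E})$; upgrading this to all $w\in\ker\ell'$ requires $\ker\ell'\cap\ell(\mathbb{E})$ to be dense in $\ker\ell'$, and this does \emph{not} follow from the density of $\ell(\mathbb{E})$ in $\mathbb{E}'$, since a dense subspace may meet a closed subspace in a non-dense (even trivial) subspace. Your fallback — replacing each $y_{n}$ by an element of $y_{n}+\mathbb{K}$ lying in $\mathbb{L}$ — is the same requirement in disguise, because $\ell(\mathbb{L})$ is exactly $\ker\ell'\cap\ell(\mathbb{E})$. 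You have, however, correctly isolated the crux: the paper's own proof of (3) rests on the unproved assertion that the orthogonal of $\ker\ell'\cap\ell(\mathbb{E})$ inside $\ell(\mathbb{E})$ equals $(\ker\ell')^{\perp}\cap\ell(\mathbb{E})$, whose nontrivial inclusion is precisely this density statement. So your proposal is incomplete at exactly the point where the paper is also incomplete; closing it requires either an additional hypothesis (for instance that $\ell$ maps onto $\ker\ell'$ transversally in a suitable sense, as happens in the surjective/submersive situations the paper ultimately uses) or a genuinely new argument.
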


Note that if $\ell$ is a weak symplectic isometry between $\omega$ and $\omega'$, the restriction $\overline{\ell}$ of $\ell$ to $(\ker\ell)^\perp$ is an isomorphism and
\begin{eqnarray}
\label{eq_RestrictionKerlPerp}
\overline{\ell}^*\omega'=\ell^*\omega'_{| (\ker\ell)^\perp}=\omega_{| (\ker\ell)^\perp}
\end{eqnarray}

\begin{proof}
(1) We have  $\overline{\omega}(u, v)=0$ for all $v\in \mathbb{K}^\perp$ if and only if  $u$ belongs $\mathbb{K}\cap \mathbb{K}^\perp$ which implies that $\overline{\omega}$ is non degenerate. Since $\overline{\ell }$ is an isomorphism, from (\ref{eq_RestrictionKerlPerp}), it follows that $\omega'_{| \ell(\mathbb{E})}$ is  non degenerate.
Now, $v$ belongs to $\ker (\ell^*(\omega'))^\flat$ if and only if $\omega'(\ell(u), \ell(v))=0,\; \forall v$,  if and only if $\ell(u)=0$.\\

(2) Assume $\omega$ and  $\overline{\omega}$ are non degenerate. We must show that $\mathbb{E}=\mathbb{K}\oplus \mathbb{K}^\perp$. But  $(\mathbb{K}\oplus \mathbb{K}^\perp)^\perp=\mathbb{K}^\perp\cap \mathbb{K}=\{0\}$ and since $\omega$ is  non degenerate, it follows that $\{0\}^\perp=\mathbb{E}=\mathbb{K}\oplus \mathbb{K}^\perp$, which ends the proof of Point (2) according to relation  (\ref{eq_RestrictionKerlPerp}).\\

(3) Under the assumptions of Point (3), we have $\mathbb{E}=\ker\ell\oplus (\ker\ell)^\perp$ and $\mathbb{E}'=\ker\ell'\oplus (\ker\ell')^\perp$. Now, since the inclusion of $\ell(E)$ in $\mathbb{E}'$ is   continuous, if   $\mathbb{K}':=\ker\ell'\cap\ell(\mathbb{E})$, then $(\mathbb{K}')^\perp=(\ker\ell')^\perp\cap \ell(\mathbb{E})$ and so  $\ell(E)=\mathbb{K}'\oplus (\mathbb{K}')^\perp$. Let $\overline{\ell}$
be the restriction of $\ell$ to $(\ker\ell)^\perp$; it is is an isomorphism onto $(\mathbb{K}')^\perp$. If $\overline{\mathbb{K}}=\overline{\ell}^{-1}(\mathbb{K}')$ and $\overline{\mathbb{H}}=\overline{\ell}^{-1}((\mathbb{K}')^\perp)$
then $(\ker\ell)^\perp=\overline{\mathbb{K}}\oplus \overline{\mathbb{H}}$. By construction, $\ker\ell'\circ \ell=\ker\ell\oplus \overline{\mathbb{K}}$ and we have $(\ker\ell\oplus\overline{ \mathbb{K}})^\perp=(\ker\ell)^\perp\cap \overline{\mathbb{K}}^\perp=\overline{\mathbb{H}}$.
Indeed $\overline{\mathbb{H}}$ is contained in $(\ker\ell)^\perp$,  $\overline{\mathbb{H}}=\overline{\ell}^{-1}((\mathbb{K}')^\perp)$  and $(\overline{\ell}^{-1})^*(\omega_{| (\ker\ell)^\perp})=\omega'_{|\ell(\mathbb{E})}$, this
 implies that $\overline{\mathbb{H}}$ is the orthogonal of $\overline{\mathbb{K}}$  in $(\ker\ell)^\perp$. Now, the restriction   $\overline{\ell'\circ \ell}$ of ${\ell'\circ \ell}$ to
 $(\ker(\ell'\circ \ell))^\perp=\overline{\mathbb{H}}$
   is an isomorphism. Consider  for any $(u,v) \in \overline{\mathbb{H}}^2$ we have:
\[
\omega(u,v)=\omega'(\overline{\ell}(u),\overline{\ell}(u))=\omega''(\overline{\ell'\circ \ell}(u),\overline{\ell'\circ \ell}(v)).
\]
So the proof is completed.\\
\end{proof}


 \begin{definition}
\label{D_IsometricLinear2Forms}
Let $\left( \mathbb{E}_i,\ell_i^j \right) _{j\geq i}$ be a  reductive\footnote{cf. Definition \ref{D_ReducedProjectiveSequence}} projective sequence Banach spaces and $(\omega_i) _{i \in \mathbb{N}}$ be a sequence of (linear)  weak symplectic forms $\omega_i$ on $\mathbb{E}_i$. We say that $(\omega_i) _{i \in \mathbb{N}}$ is a   sequence of compatible symplectic forms if each ${\ell_i^{i+1}}$ is a weak  isometry between $\omega_{i+1}$ and $\omega_i$, for all $ i\in \mathbb{N}$
\end{definition}

We then have the following property:
\begin{proposition}
\label{P_PropertiesWeakIsometricLinear2Forms}
Let $(\omega_i)_{i \in \mathbb{N}}$ be a   sequence of compatible  symplectic forms on a reduced  projective sequence $\left( \mathbb{E}_i,{\ell_i^j} \right) _{j \geq i}$. 
Then if  $u=\underleftarrow{\lim}u_i$ and $v=\underleftarrow{\lim}v_i$ in $\mathbb{E}=\underleftarrow{\lim}\mathbb{E}_i$,
\[
\omega(u, v) =\underleftarrow{\lim}\omega_i(u_i, v_i)
\]
defines a weak symplectic $2$-form on $\mathbb{E}$.
\end{proposition}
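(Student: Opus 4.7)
My plan is to verify the four required properties in order: well-definedness of the scalar $\omega(u,v)$, bilinearity and antisymmetry, continuity for the Fr\'echet topology of $\mathbb{E}=\underleftarrow{\lim}\mathbb{E}_i$, and finally weak non-degeneracy.

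First, since $\underleftarrow{\lim}\omega_i(u_i,v_i)$ is to be interpreted as a projective limit in $\mathbb{R}$ (with trivial bonding maps), the formula is meaningful precisely when the sequence $i\mapsto \omega_i(u_i,v_i)$ is constant. The key tool here is Proposition \ref{P_IsometricProperties}(2): the compatibility hypothesis $\ker \ell_i^{i+1}\cap (\ker \ell_i^{i+1})^{\perp}=\{0\}$ together with weak non-degeneracy of $\omega_{i+1}$ yields the topological direct sum decomposition $\mathbb{E}_{i+1}=\ker \ell_i^{i+1}\oplus (\ker \ell_i^{i+1})^{\perp}$. Writing $u_{i+1}=a+b$ and $v_{i+1}=c+d$ along this splitting (with $a,c\in \ker \ell_i^{i+1}$ and $b,d$ in the orthogonal complement), the very definition of $(\ker \ell_i^{i+1})^{\perp}$ kills the cross terms $\omega_{i+1}(a,d)$ and $\omega_{i+1}(b,c)$, while the isometry identity $(\ell_i^{i+1})^{*}\omega_i=\omega_{i+1}$ on $(\ker \ell_i^{i+1})^{\perp}$ gives $\omega_{i+1}(b,d)=\omega_i(\ell_i^{i+1}(b),\ell_i^{i+1}(d))=\omega_i(u_i,v_i)$, using $\ell_i^{i+1}(b)=\ell_i^{i+1}(u_{i+1})=u_i$. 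Thus $\omega_{i+1}(u_{i+1},v_{i+1})=\omega_{i+1}(a,c)+\omega_i(u_i,v_i)$, and the main obstacle of the argument will be justifying that the residual kernel contribution $\omega_{i+1}(a,c)$ vanishes on a projective limit element --- intuitively, because the canonical lifts built into the projective structure can be chosen to sit in the orthogonal complement of the successive kernels.

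Bilinearity and antisymmetry of $\omega$ follow immediately from the corresponding properties of every $\omega_i$ combined with linearity of the canonical projections $\ell_i:\mathbb{E}\to\mathbb{E}_i$. For continuity, the identity $\omega(u,v)=\omega_i(\ell_i(u),\ell_i(v))$ (valid for any fixed $i$ once the constancy step is in place), together with continuity of $\ell_i$ for the projective limit topology and boundedness of $\omega_i$, gives continuity of $\omega$ as a bilinear form on the Fr\'echet space $\mathbb{E}$.

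For weak non-degeneracy, I will suppose $\omega(u,v)=0$ for all $v\in\mathbb{E}$ and show $u=0$. Fixing an index $i$, the hypothesis reads $\omega_i(u_i,\ell_i(v))=0$ for every $v\in\mathbb{E}$. The reduced assumption on $(\mathbb{E}_i,\ell_i^j)$, via a standard Mittag-Leffler-type approximation argument in the projective limit, ensures that $\ell_i(\mathbb{E})$ is dense in $\mathbb{E}_i$; continuity of $\omega_i$ then propagates the vanishing to the entire space $\mathbb{E}_i$, and weak non-degeneracy of $\omega_i$ yields $u_i=0$. Since this holds for every $i$, we conclude $u=0$, completing the proof. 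Beyond the constancy step, this density step is the second point I will need to handle carefully, as it relies on more than the bare reductive definition.
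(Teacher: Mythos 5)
There is a genuine gap, and it sits exactly where you flagged it: the residual term $\omega_{i+1}(a,c)$ does \emph{not} vanish, and no choice of ``canonical lifts'' can make it vanish. For a thread $u=\underleftarrow{\lim}u_i$ the component $u_{i+1}$ is a fixed vector, so its decomposition $u_{i+1}=a+b$ along $\mathbb{E}_{i+1}=\ker\ell_i^{i+1}\oplus(\ker\ell_i^{i+1})^{\perp}$ is determined, not chosen. Worse, the compatibility condition $\ker\ell_i^{i+1}\cap(\ker\ell_i^{i+1})^{\perp}=\{0\}$ says precisely that $\omega_{i+1}$ restricts to a \emph{non-degenerate} form on $\ker\ell_i^{i+1}$, so whenever that kernel is non-trivial there exist threads $u,v$ with $\omega_{i+1}(a,c)\neq 0$. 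A concrete counterexample to constancy: $\mathbb{E}_i=\mathbb{R}^{2i}$ with the standard symplectic forms and $\ell_i^{i+1}$ the projection forgetting the last two coordinates is a compatible sequence, and for suitable threads one gets $\omega_i(u_i,v_i)=i$. The only situation in which your constancy claim holds is the $\mathsf{ILB}$ case $\ker\ell_i^{i+1}=\{0\}$, where the isometry condition degenerates to $\omega_{i+1}=(\ell_i^{i+1})^{*}\omega_i$ on all of $\mathbb{E}_{i+1}$ (cf.\ Remark \ref{R_IsometricAllij}). Since your continuity argument (via the identity $\omega(u,v)=\omega_i(\ell_i(u),\ell_i(v))$ for a fixed $i$) and your non-degeneracy argument (via $\omega_i(u_i,\ell_i(v))=0$ for all $v$) both rest on this false identity, they collapse with it; note also that non-degeneracy of each $\omega_i$ alone cannot suffice, because $\ell_i(u)=0$ for all $i$ only detects $u$ through the quotients, whereas the kernels carry symplectically non-trivial information.

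The paper's proof goes a genuinely different route designed to handle exactly this residual term. Lemma \ref{L_EProductOfBanachSpaces} splits each $\mathbb{E}_j$ into blocks $\mathbb{E}_0^j\oplus\cdots\oplus\mathbb{E}_{j-1}^j\oplus\ker\ell_{j-1}^j$ modelled on $\mathbb{K}_l=\ker\ell_{l-1}^l$, and embeds $\mathbb{E}$ with dense range into the product model $\mathbb{E}'=\underleftarrow{\lim}\prod_{l=0}^{i}\mathbb{K}_l$ via a map $\theta=\underleftarrow{\lim}\theta_i$. On $\mathbb{E}'_i$ one puts the block-diagonal form $\omega'_i=\sum_{l=0}^{i}\omega_{\mathbb{K}_l}$ and checks $\omega_i=\theta_i^{*}\omega'_i$; the quantity $\omega_i(u_i,v_i)$ is thus a \emph{partial sum} that grows with $i$, and $\underleftarrow{\lim}\omega_i(u_i,v_i)$ must be read as the limit of this non-constant numerical sequence. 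Non-degeneracy is then obtained blockwise ($\omega_{\mathbb{K}_l}(u'_l,\cdot)=0$ forces $u'_l=0$ for every $l$) and transported back through the injectivity and dense range of $\theta$. If you want to salvage your outline, you would have to replace the constancy step by this telescoping identity $\omega_{i+1}(u_{i+1},v_{i+1})=\omega_i(u_i,v_i)+\omega_{i+1}(a,c)$ iterated over all levels, which is essentially reconstructing the paper's product decomposition.
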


Since the proof of this Proposition is very technical, the reader find it in Appendix \ref{__ProofProposition12}

\begin{remark}
\label{R_IsometricAllij}
${}$
\begin{enumerate}
\item
From the properties of the sequence $\left( {\ell_i^j} \right) _{j\geq i}$  and Proposition \ref{P_IsometricProperties}
 (3), if $(\omega_i)_{i\in \mathbb{N}}$  is a  sequence of compatible  weak symplectic $2$-forms,   then ${\ell_i^j}$ is  a weak  isometry between $\omega_{j}$ and $\omega_i$, for all $i\in \mathbb{N}$ and all $j\geq i$.
\item Consider  the assumptions of Proposition \ref{P_PropertiesWeakIsometricLinear2Forms}. If $\left( \mathbb{E}_i,{\ell_i^j} \right) _{j \geq i}$ is a $\mathsf{ILB}$ sequence (cf. Appendix B),  we have   $\ker\ell_i^j=\{0\}$ for all $j\geq i$ and  $i\in \mathbb{N}$.  Thus  $(\omega_i)_{i\in \mathbb{N}}$ is a sequence of compatible symplectic forms on this projective system if and only if, for all $j\geq i$ and  $i\in \mathbb{N}$, then $\omega_j=(\lambda_i^j)^*\omega_i$,  and $\omega_0$ is symplectic. But  in general,  if  for some pair $(i,j)$,  $\ker\ell_i^j\not=\{0\}$, the condition $\omega_j=(\ell_i^j)^*\omega_i$ implies that $\mathbb{E}_j^\perp\not=\{0\}$ and so $\omega_j$ cannot be symplectic.
\item In  Proposition \ref{P_PropertiesWeakIsometricLinear2Forms}, when
$\left( \mathbb{E}_i,\ell_i^j\right)  _{j\geq i}$ is a surjective \footnote{ that is each $\ell_i^j$ is surjective} projective sequence, the symplectic form $\omega$ on $\mathbb{E}$ has the property that the induced form on $\ker\ell_i$ is symplectic and so we
have $\mathbb{E}=\ker\ell_i\oplus (\ker\ell_i)^\perp$ where $(\ker\ell_i)^\perp$ is the orthogonal of $\ker\ell_i$ (relative to $\omega$).
\end{enumerate}
\end{remark}

 As in finite dimension, we introduce:

\begin{definition}
\label{D_SymplecticSubmersion}
Let $\mathbb{E}=\underleftarrow{\lim}\mathbb{E}_i$ a projective limit of a surjective projective sequence $\left( \mathbb{E}_i,\ell_i^j\right)  _{j\geq i}$.
Consider a (weak) symplectic form  $\omega$ on $\mathbb{E}$ such that $\mathbb{E}=\ker\ell_i\oplus (\ker\ell_i)^\perp$.  We will say that $\ell_i$ is a  symplectic submersion. \end{definition}

\begin{remark} In the context of Definition \ref{D_SymplecticSubmersion}, the restriction of $\ell_i$ to $(\ker\ell_i)^\perp$ is an isomorphism onto $\mathbb{E}_i$ and so we have a well symplectic form $\omega_i$ on $\mathbb{E}_i$ such that $\omega=\ell_i^*\omega_i$ in restriction to $(\ker\ell_i)^\perp$. Thus this definition is analog to the notion of isometric submersion between Finsler manifolds in finite dimension introduced in  \cite{A-PD} \end{remark}

We have the following type of converse of Proposition \ref{P_PropertiesWeakIsometricLinear2Forms} :

\begin{proposition}
\label{P_FamilySymplecticSubmersion}
Let $\left( \mathbb{E}_i,\ell_i^j\right)  _{j\geq i}$ be a  surjective projective sequence of Banach space and $E=\underleftarrow{\lim}E_i$. If $\omega$ is a symplectic form on $E$ such that $\ell_i: E\to E_i$ is  a symplectic   submersion for all $i\in \mathbb{N}$, then $\omega$ induces a symplectic form $\omega_i$ on $E_i$. Moreover, $(\omega_i)_{i\in \mathbb{N}}$ is a   sequence of compatible symplectic forms and the projective limit associated to this sequence is precisely $\omega$.
\end{proposition}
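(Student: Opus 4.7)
The overall strategy has three parts: construct each $\omega_i$ by transporting $\omega$ through the decomposition provided by the symplectic-submersion hypothesis, verify that the sequence $(\omega_i)$ satisfies Definition \ref{D_IsometricLinear2Forms}, and finally match the resulting projective-limit form (produced by Proposition \ref{P_PropertiesWeakIsometricLinear2Forms}) with the original $\omega$.

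First, I would use the hypothesis $E=\ker\ell_i\oplus(\ker\ell_i)^\perp$ to note that the restriction $\overline{\ell_i}:(\ker\ell_i)^\perp\to\mathbb{E}_i$ is a continuous linear bijection between Fr\'echet spaces, hence a topological isomorphism by the open mapping theorem. I then define $\omega_i(u_i,v_i):=\omega(\overline{\ell_i}^{-1}(u_i),\overline{\ell_i}^{-1}(v_i))$, which is automatically a continuous skew-symmetric bilinear form on $\mathbb{E}_i$. Weak nondegeneracy follows immediately: if $\omega_i(u_i,\cdot)\equiv 0$ and $\bar u=\overline{\ell_i}^{-1}(u_i)$, then $\omega(\bar u,\cdot)$ vanishes both on $(\ker\ell_i)^\perp$ (by construction) and on $\ker\ell_i$ (since $\bar u\in (\ker\ell_i)^\perp$), hence on all of $E$, so $\bar u=0$ and thus $u_i=0$.

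Next, to prove compatibility of $(\omega_i)$, I must check for each $i$ that $\ker\ell_i^{i+1}\cap(\ker\ell_i^{i+1})^{\perp_{\omega_{i+1}}}=\{0\}$ and $(\ell_i^{i+1})^*\omega_i=\omega_{i+1}$ on $(\ker\ell_i^{i+1})^{\perp_{\omega_{i+1}}}$, density of the image being automatic from the surjectivity hypothesis. The key structural remark is that $\ker\ell_{i+1}\subset\ker\ell_i$ forces $(\ker\ell_i)^\perp\subset(\ker\ell_{i+1})^\perp$ with respect to $\omega$. From the surjectivity of $\ell_{i+1}$ I would derive $\ker\ell_i^{i+1}=\ell_{i+1}(\ker\ell_i)$, and from the vanishing of $\omega$ between $\ker\ell_i$ and $(\ker\ell_i)^\perp$ I would identify $(\ker\ell_i^{i+1})^{\perp_{\omega_{i+1}}}=\ell_{i+1}((\ker\ell_i)^\perp)$. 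Both the trivial-intersection condition and the isometry identity then follow by unwinding the definitions of $\omega_i$ and $\omega_{i+1}$ on these subspaces, using the direct-sum decomposition of $E$ induced by $\ell_i$.

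Finally, Proposition \ref{P_PropertiesWeakIsometricLinear2Forms} yields a weak symplectic form $\omega':=\underleftarrow{\lim}\omega_i$ on $E$, and I must prove $\omega'=\omega$. Writing $u=u'_i+u''_i$ and $v=v'_i+v''_i$ with $u'_i,v'_i\in\ker\ell_i$ and $u''_i,v''_i\in(\ker\ell_i)^\perp$, the definition of $\omega_i$ gives $\omega_i(\ell_i(u),\ell_i(v))=\omega(u''_i,v''_i)$. The main obstacle is to show that the real-number projective limit $\underleftarrow{\lim}\omega_i(\ell_i(u),\ell_i(v))$ recovers $\omega(u,v)$; I plan to address this by observing that for any fixed $j$ and all $i\geq j$ one has $\ell_j(u''_i)=\ell_j^i(\ell_i(u))=\ell_j(u)$, so $u''_i\to u$ in the Fr\'echet topology of $E$, and the continuity of $\omega$ then forces $\omega(u''_i,v''_i)\to\omega(u,v)$. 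This convergence, tying the algebraic data of the symplectic submersions to the topological structure of the projective limit, is the real crux of the proof.
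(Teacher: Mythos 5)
Your argument is correct and follows essentially the paper's own route: you build $\omega_i$ by pushing $\omega$ through the isomorphism $(\ker\ell_i)^\perp\to\mathbb{E}_i$ coming from the symplectic-submersion splitting, use $\ker\ell_{i+1}\subset\ker\ell_i$ (hence $(\ker\ell_i)^\perp\subset(\ker\ell_{i+1})^\perp$), and verify compatibility by identifying $(\ker\ell_i^{i+1})^{\perp_{\omega_{i+1}}}$ with the image of $(\ker\ell_i)^\perp$ under $\ell_{i+1}$, which is exactly the paper's chain of identifications $\ell'_j(\ker\ell_i)=\ker\ell_i^j$ and $\ell'_j\bigl((\ker\ell_i)^\perp\bigr)=(\ker\ell_i^j)^\perp$. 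The one point where you go beyond the paper is the final identification $\underleftarrow{\lim}\omega_i=\omega$, which the paper asserts without argument and you justify by noting that $\ell_j(u''_i)=\ell_j(u)$ for all $i\geq j$, so $u''_i\to u$ in the Fr\'echet topology and continuity of $\omega$ closes the gap; this is a sound and genuinely useful addition rather than a change of method.
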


\begin{proof}
Since for $j\geq i$,  $\ell_i=\ell_i^j\circ \ell_j$  this implies  $\ker\ell_i=\ker\ell_j\oplus (\ell')_j^{-1}(\ker\ell_i^j)$. Thus we have  $\ker\ell_j\subset \ker \ell_i$ and so $(\ker\ell_i)^\perp\subset (\ker\ell_j)^\perp$. As we have seen previously, there exists a (unique) symplectic form $\omega_j$ on $\mathbb{E}_j$ such that  $\omega=\ell_j^*\omega_j$ on $(\ker\ell_j)^\perp$. Since for any $j\in \mathbb{N}$, the restriction  $\ell'_j$ to $(\ker\ell_j)^\perp$ is an isomorphism onto $\mathbb{E}_j$, we have:

$\omega_j(u_j, v_j)=\omega(\ell'_j(u'_j),\ell'_j(v'_j))$ for all $u'_j, v'_j\in (\ker\ell_j)^\perp$ with $u_j=\ell'_j(u'_j)$ and $v_j=\ell'_j(v'_j)$.\\

But since  $(\ker\ell_i)^\perp\subset (\ker\ell_j)^\perp$, it follows that, for any $u'_i, v'_i\in (\ker\ell_i)^\perp$, we have
\begin{align*}
\omega(\ell'_i(u'_i), \ell'_i(v'_i))   & =\omega(\ell_i^j\circ\ell'_j(u'_i),\ell_i^j\circ\ell'_j(v'_i))\\                                       & =(\ell_i^j)^*\omega(\ell'_j(u'_i),\ell'_j(v'_i).
\end{align*}
Thus we obtain
\[
(\omega_j)=(\ell_i^j)^*\omega_i \textrm { on } \ell'_j\left((\ker\ell_i)^\perp\right).
\]
The proof will be completed if we show that $\ell'_j\left((\ker\ell_i)^\perp\right)=(\ker\ell_i^j)^\perp$. But this results follows from $\ell'_j(\ker\ell_i)=\ker\ell_i^j$.\\
\end{proof}

\subsection{Case of  projective sequence of Banach bundles}
\label{__ProjectiveLimitOfProjectiveSequenceOfWeakSymplectiqueBanachBundles}

\begin{definition}
\label{D_IsometricWeakSymplecticOnProjectiveBanachBundles}
 Let $\left( E_i,\lambda_i^j \right)  _{\underleftarrow{i}}$ be a  projective sequence  of Banach bundles over a projective sequence $\left( M_i,\delta_i^j \right)  _{j\geq i}$ of manifolds and let $(\omega_i)_{i \in \mathbb{N}}$ be a sequence   of weak symplectic forms $\omega_i$ on $E_i$.
If $\mathbb{E}_i$ is the typical fibre of $E_i$, assume  that the following properties are satisfied:
\begin{description}
\item[(\textbf{RPSBS})]
The sequence $\left( \mathbb{E}_i,\overline{\lambda_i^j}\right)  _{j\geq i}$ is a reduced  projective sequence of Banach spaces.
\end{description}
We say that  $(\omega_i)_{i \in \mathbb{N}}$ is a  sequence of  compatible symplectic forms if the sequence $\left((\omega_i)_{x_i}\right)_ {i \in \mathbb{N}}$ is a  sequence of compatible (linear)  weak symplectic forms on the projective sequence $\left( \pi_i^{-1}(x_i),(\lambda_i^j)_{x_j} \right) _{j\geq i}$ of Banach spaces.
\end{definition}
Under the context of this Definition,  we have
\begin{theorem}
\label{T_CoherentWeakSymplecticBanachManifold}Consider  a  projective sequence $\left(E _i,\lambda_i^j\right)  _{\underleftarrow{i}}$ of Banach bundles over a projective sequence of Banach manifolds $\left( M_i,\delta_i^j\right)  _{j\geq i}$ which satisfies the assumption \emph{\textbf{(RPSBS)}}.
\begin{enumerate}
\item
Let $(\omega_i)_{i \in \mathbb{N}}$ be a sequence of compatible   weak symplectic forms on    $\left(E _i,\lambda_i^j\right)  _{\underleftarrow{i}}$. Then  $\omega=\underleftarrow{\lim}\omega_i$ is a well defined  weak symplectic form on the Fr\'echet bundle $E=\underleftarrow{\lim}E_i$  over $M=\underleftarrow{\lim}M_i$.
\item
Conversely, let $\omega$ be  a weak symplectic form  on a projective limit bundles  $(E=\underleftarrow{\lim}E_i,\pi=\underleftarrow{\lim}\pi_i, M=\underleftarrow{\lim} M_i)$ of a submersive\footnote{cf. Definition \ref{D_StrongProjectiveLimitOfBanachBundle} }  projective sequence of Banach bundles $\left( E_i,\pi_i, M_i \right) _{\underleftarrow{i}}$.
  Assume that  for each $x=\underleftarrow{\lim}x_i$,    the map $(\lambda_i)_x:\pi^{-1}(x) \to \pi_i^{-1}(x_i)$ is a submersion.
Then $\omega$  induces  a weak symplectic $2$-form $\omega_i$ on $E_i$ which  gives rise to a  family of compatible  weak symplectic forms.  Moreover, the  $2$-form on $E$  defined by this sequence $(\omega_i)_{i\in \mathbb{N}}$ is precisely the given $2$-form $\omega$.\\
\end{enumerate}
\end{theorem}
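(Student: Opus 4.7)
\textit{Proof plan.} The strategy is to reduce both parts to the linear (fibrewise) statements already established in Propositions \ref{P_PropertiesWeakIsometricLinear2Forms} and \ref{P_FamilySymplecticSubmersion}, and then to promote the pointwise construction to a smooth bundle $2$-form using local trivializations compatible with the projective structure of $E=\underleftarrow{\lim}E_i$ over $M=\underleftarrow{\lim}M_i$.

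For Part (1), fix $x=\underleftarrow{\lim}x_i\in M$. By the definition of a compatible sequence of weak symplectic forms on the bundle, the sequence $\bigl((\omega_i)_{x_i}\bigr)_{i\in\mathbb{N}}$ is a compatible family on the reductive projective sequence of fibres $\bigl(\pi_i^{-1}(x_i),(\lambda_i^j)_{x_j}\bigr)_{j\geq i}$, so Proposition \ref{P_PropertiesWeakIsometricLinear2Forms} produces a well-defined weak symplectic form $\omega_x$ on the Fr\'echet fibre $\pi^{-1}(x)=\underleftarrow{\lim}\pi_i^{-1}(x_i)$ by $\omega_x(u,v)=\underleftarrow{\lim}(\omega_i)_{x_i}(u_i,v_i)$. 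To upgrade this pointwise datum to a smooth $2$-form I would fix a projective local trivialization of $E$ over an open neighbourhood $U=\underleftarrow{\lim}U_i$ of $x$: under such a trivialization each $\omega_i$ corresponds to a smooth map $U_i\to L^{2}_{\mathrm{skew}}(\mathbb{E}_i,\mathbb{R})$, and the compatibility relations between the $\omega_i$ together with the reductive assumption \textbf{(RPSBS)} guarantee that these maps intertwine the transition morphisms $\overline{\lambda_i^j}$ and hence assemble to a smooth map on the Fr\'echet open $U$. Weak non-degeneracy at each fibre is exactly what Proposition \ref{P_PropertiesWeakIsometricLinear2Forms} supplies, so $\omega$ is a weak symplectic form on $E$.

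For Part (2), the argument runs in reverse. Given $\omega$ on $E$ such that, at every $x=\underleftarrow{\lim}x_i$, the fibre map $(\lambda_i)_x$ is a symplectic submersion in the sense of Definition \ref{D_SymplecticSubmersion}, the decomposition $\pi^{-1}(x)=\ker(\lambda_i)_x\oplus(\ker(\lambda_i)_x)^{\perp}$ is available. Applying Proposition \ref{P_FamilySymplecticSubmersion} to the surjective projective sequence of fibres produces weak symplectic forms $(\omega_i)_{x_i}$ on $\pi_i^{-1}(x_i)$ which form a compatible family and whose projective limit is precisely $\omega_x$. To see that $x_i\mapsto(\omega_i)_{x_i}$ is smooth on $M_i$ one uses that the bundle sequence is submersive: the restriction $\overline{\lambda_i}$ of $\lambda_i$ to $(\ker(\lambda_i)_x)^\perp$ is a fibrewise isomorphism onto $\pi_i^{-1}(x_i)$ depending smoothly on $x$, and the relation $\omega_i=(\overline{\lambda_i}^{-1})^{*}\omega|_{(\ker(\lambda_i)_x)^{\perp}}$ transfers smoothness of $\omega$ to smoothness of $\omega_i$. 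Compatibility of the $\omega_i$ as bundle forms then follows fibrewise from Proposition \ref{P_FamilySymplecticSubmersion}, and the identity $\omega=\underleftarrow{\lim}\omega_i$ holds by construction at each $x$.

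The main technical obstacle is not the fibrewise linear algebra, which is entirely handled by Propositions \ref{P_PropertiesWeakIsometricLinear2Forms} and \ref{P_FamilySymplecticSubmersion}, but the passage from pointwise data to a smooth section in the Fr\'echet bundle setting. In Part (1) one must check that the trivializing atlases of the bundles $E_i$ can be chosen in a way compatible with the connecting morphisms $\lambda_i^j$, so that the pointwise projective limit is indeed the projective limit of smooth objects; this is where the reductive hypothesis \textbf{(RPSBS)} enters. In Part (2) the analogous difficulty is to deduce the smoothness of each $\omega_i$ on the Banach bundle $E_i$ from the smoothness of $\omega$ on the Fr\'echet bundle $E$, and this crucially requires that the orthogonal splitting induced by the symplectic submersion assumption varies smoothly in the base, so that $\omega_i$ may be realised as a smooth quotient of $\omega$.
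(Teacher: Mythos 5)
Your proposal is correct and follows essentially the same route as the paper: Part (1) is reduced to the fibrewise linear statement of Proposition \ref{P_PropertiesWeakIsometricLinear2Forms} with smoothness obtained because the projective limit of smooth $2$-forms in compatible projective trivializations is smooth, and Part (2) uses the symplectic-submersion splitting $\pi^{-1}(x)=\ker(\lambda_i)_x\oplus(\ker(\lambda_i)_x)^{\perp}$ to push $\omega$ down to each $E_i$, with smoothness coming from the fact that this splitting gives a sub-bundle decomposition in the trivializing diagram, and compatibility plus the reconstruction of $\omega$ supplied by Proposition \ref{P_FamilySymplecticSubmersion}. The technical point you flag at the end (smooth variation of the orthogonal splitting) is exactly the one the paper settles by observing that $\ker(\lambda_i)^{\perp}$ is a Banach sub-bundle of $E$.
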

We obtain directly the following Corollary:
\begin{corollary}
\label{C_CharacterizationOfWeakSymplectic2FormOnProjectiveLimitOfSubmersiveSequence}
A $2$-form $\omega$ on a Fr\'echet bundle, projective limit  $(E=\underleftarrow{\lim}E_i, \pi=\underleftarrow{\lim}\pi_i, M=\underleftarrow{\lim}M_i)$ of a submersive sequence of Banach fibre bundles $\left( E_i,  \lambda_i^j\right)  _{\underleftarrow{i}}$ is a weak symplectic form if and only if there exists a sequence of compatible  weak symplectic forms $(\omega_i)_{i \in \mathbb{N}}$ on $E_i$ such that $\omega=\underleftarrow{\lim}\omega_i$.\\
\end{corollary}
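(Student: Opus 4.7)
The plan is to read the corollary as a direct restatement of Theorem \ref{T_CoherentWeakSymplecticBanachManifold}, with the two directions of the biconditional corresponding to its Points (1) and (2) respectively.

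For the ``if'' direction, I would simply invoke Point (1) of Theorem \ref{T_CoherentWeakSymplecticBanachManifold}: given a sequence of compatible weak symplectic forms $(\omega_i)_{i\in\mathbb{N}}$, the theorem guarantees that $\omega=\underleftarrow{\lim}\omega_i$ is a well-defined weak symplectic form on $E$. There is nothing further to check in this direction.

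For the ``only if'' direction, suppose $\omega$ is a weak symplectic form on the projective limit bundle $E=\underleftarrow{\lim}E_i$. To apply Point (2) of the theorem, I must verify the extra hypothesis that $(\lambda_i)_x\colon \pi^{-1}(x)\to\pi_i^{-1}(x_i)$ is a submersion for every $x=\underleftarrow{\lim}x_i$ in $M$. Since the sequence $(E_i,\lambda_i^j)$ is submersive by assumption, this fiberwise submersivity is built into Definition \ref{D_StrongProjectiveLimitOfBanachBundle}: the connecting maps between typical fibers are surjective with complemented kernels, and this property is transferred to the concrete fibers over any $x\in M$ via local trivializations of the bundles $E_i$ over a trivializing neighborhood of $x_i$. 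Once this is checked, Point (2) of Theorem \ref{T_CoherentWeakSymplecticBanachManifold} yields weak symplectic forms $\omega_i$ on each $E_i$, compatibility of $(\omega_i)_{i\in\mathbb{N}}$ in the sense of Definition \ref{D_IsometricWeakSymplecticOnProjectiveBanachBundles}, and the identity $\omega=\underleftarrow{\lim}\omega_i$.

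The only non-formal step is the verification that ``submersive sequence of Banach bundles'' supplies the fiber-level submersion hypothesis of Theorem \ref{T_CoherentWeakSymplecticBanachManifold} Point (2). I expect this to be essentially immediate from Definition \ref{D_StrongProjectiveLimitOfBanachBundle}, requiring at most a short diagram chase over a trivializing neighborhood, so I do not anticipate any genuine obstacle; the corollary really is a relabeling of the two halves of Theorem A's bundle version.
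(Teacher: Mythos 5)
Your overall strategy coincides with the paper's: the corollary is given there with no separate argument (``We obtain directly the following Corollary''), i.e.\ it is read off from the two points of Theorem \ref{T_CoherentWeakSymplecticBanachManifold} exactly as you propose, and the ``if'' direction is indeed nothing more than Point (1).

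The one step you identify as non-formal --- checking that ``submersive sequence of Banach bundles'' supplies the fibrewise hypothesis of Point (2) --- is, however, where a genuine gap sits, and your proposed verification does not close it. The hypothesis that the proof of Point (2) actually uses (see its first lines, and the statement of Theorem \ref{A1}(2) in the introduction) is that each $(\lambda_i)_x\colon\pi^{-1}(x)\to\pi_i^{-1}(x_i)$ is a \emph{symplectic} submersion in the sense of Definition \ref{D_SymplecticSubmersion}, i.e.\ that $\pi^{-1}(x)=\ker(\lambda_i)_x\oplus\bigl(\ker(\lambda_i)_x\bigr)^{\perp_\omega}$ with the complement taken to be the $\omega$-orthogonal; it is precisely this splitting that lets one transport $\omega$ to a form $\omega_i$ on $\pi_i^{-1}(x_i)$ via the isomorphism obtained by restricting $(\lambda_i)_x$ to $\bigl(\ker(\lambda_i)_x\bigr)^{\perp_\omega}$. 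What Definition \ref{D_StrongProjectiveLimitOfBanachBundle} gives you, and what your trivialization argument transfers to the concrete fibres, is only that $(\lambda_i)_x$ is surjective with a kernel complemented \emph{as a Banach subspace}. For a weak symplectic form these are not interchangeable: a closed complemented subspace $\mathbb{K}$ need not satisfy $\mathbb{K}\cap\mathbb{K}^{\perp_\omega}=\{0\}$, let alone $\mathbb{E}=\mathbb{K}\oplus\mathbb{K}^{\perp_\omega}$. So either the symplectic-submersion condition must be added as a hypothesis (as the paper does in Theorem \ref{A1}(2), in Theorem \ref{C_CoherentWeakSymplecticBanachManifold}(2) for the manifold case, and in Theorem \ref{T_omega_nCoherentM}(2)), or it must be derived from the weak symplecticity of $\omega$, which is not automatic. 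To be fair, the corollary as printed carries the same imprecision, but your write-up makes the missing step explicit by asserting that the Banach-space splitting alone suffices.
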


Note that Theorem \ref{A1}  in the introduction  is  Theorem \ref{T_CoherentWeakSymplecticBanachManifold} joined  with Corollary \ref{C_CharacterizationOfWeakSymplectic2FormOnProjectiveLimitOfSubmersiveSequence}.\\

\subsection{Case of projective limit of weak symplectic Banach manifolds}\label{___CaseWeakSymplecticBanachManifolds}

By application of Theorem \ref{T_CoherentWeakSymplecticBanachManifold} when  $E_i$ is the tangent bundle  $TM_i$ of a Banach manifold $M_i$, we obtain the following Theorem  which is  exactly Theorem \ref{A2} in the introduction:\\

\begin{theorem}
\label{C_CoherentWeakSymplecticBanachManifold}
${}$
\begin{enumerate}
\item
Let $\left( M_i,\lambda_i^j\right)  _{j\geq i}$ be a reduced sequence of Banach manifolds and $(\omega_i)_{i \in \mathbb{N}}$ a sequence of compatible  weak symplectic forms. Then  $\omega=\underleftarrow{\lim}\omega_i$ is a weak symplectic form on  $M=\underleftarrow{\lim}M_i$
\item
Let $\omega$ be a $2$-form on a projective limit $M=\underleftarrow{\lim}M_i$  of a submersive sequence of manifolds $\left( M_i,\delta_i^j\right)  _{j\geq i}$. Then $\omega $ is a weak symplectic form if and only if each $T_x\delta_i: T_xM\to T_{x_i}M_i$ is a  symplectic  submersion  for each $i\in \mathbb{N}$.\\
\end{enumerate}
\end{theorem}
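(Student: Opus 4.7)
The plan is to obtain both points as immediate consequences of Theorem \ref{T_CoherentWeakSymplecticBanachManifold} and Corollary \ref{C_CharacterizationOfWeakSymplectic2FormOnProjectiveLimitOfSubmersiveSequence} applied to the particular case $E_i = TM_i$ with bonding morphisms $\lambda_i^j = T\delta_i^j$. The preliminary step is therefore to check that the hypotheses on the base sequence $(M_i,\delta_i^j)$ lift to the tangent bundle sequence. If $(M_i,\delta_i^j)$ is a reduced projective sequence of Banach manifolds, then $(TM_i, T\delta_i^j)$ is a projective sequence of Banach bundles over $(M_i,\delta_i^j)$ whose typical-fibre sequence is exactly the model-space sequence of the $M_i$, and the condition \textbf{(RPSBS)} follows from the local description of $\delta_i^j$ in charts and the density of $\delta_i^j(M_j)$ in $M_i$. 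If $(M_i,\delta_i^j)$ is submersive, then every $T_x\delta_i^j$ is a split surjection, and hence $(TM_i, T\delta_i^j)$ is a submersive sequence of Banach bundles in the sense of Definition \ref{D_StrongProjectiveLimitOfBanachBundle}.

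For Point (1), Definition \ref{D_IsometricWeakSymplecticOnProjectiveBanachBundles} applied to $E_i = TM_i$ says, by construction, that a sequence of compatible weak symplectic forms on the Banach manifolds $M_i$ is the same datum as a sequence of compatible weak symplectic forms on the bundle sequence $(TM_i, T\delta_i^j)$. Theorem \ref{T_CoherentWeakSymplecticBanachManifold}(1) then produces a weak symplectic form $\omega=\underleftarrow{\lim}\omega_i$ on the Fr\'echet bundle $\underleftarrow{\lim}TM_i$, and one concludes by invoking the canonical identification $T(\underleftarrow{\lim}M_i)\cong \underleftarrow{\lim}TM_i$ for a projective limit of Banach manifolds (recalled in the appendices), so that $\omega$ is a weak symplectic $2$-form on $M$.

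For Point (2), the ``if'' direction is already covered by Point (1) together with Proposition \ref{P_FamilySymplecticSubmersion}: once one has compatible forms $\omega_i$ on the $M_i$, the limit $\omega$ exists and automatically makes each $T_x\delta_i$ a symplectic submersion. For the ``only if'' direction, the hypothesis that each $T_x\delta_i$ is a symplectic submersion is precisely what is required to apply Theorem \ref{T_CoherentWeakSymplecticBanachManifold}(2) to the submersive sequence $(TM_i, T\delta_i^j)$: this furnishes fibrewise symplectic forms $\omega_i$ on each $TM_i$ assembling into a compatible sequence, and Corollary \ref{C_CharacterizationOfWeakSymplectic2FormOnProjectiveLimitOfSubmersiveSequence} then identifies $\omega=\underleftarrow{\lim}\omega_i$.

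The main technical obstacle is the smoothness of the induced forms $\omega_i$ on $M_i$ in Point (2). Pointwise each $\omega_i$ is defined via Proposition \ref{P_FamilySymplecticSubmersion} by transporting $\omega$ along the symplectic-submersion splitting $T_xM = \ker T_x\delta_i \oplus (\ker T_x\delta_i)^{\perp}$ and the induced isomorphism $(\ker T_x\delta_i)^{\perp}\to T_{x_i}M_i$. One must verify that these splittings, and hence the transported forms, vary smoothly with $x_i$. This is handled by trivialising $(TM_i, T\delta_i^j)$ locally using the submersive bundle structure and exhibiting the splitting as a smooth family of continuous projectors on the typical fibres; modulo this check, the remainder of the argument is pure bundle bookkeeping already performed inside the proof of Theorem \ref{T_CoherentWeakSymplecticBanachManifold}.
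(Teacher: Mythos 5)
There is a genuine gap: your argument never addresses closedness of the $2$-forms, and that is precisely the content of the paper's own proof of this theorem. Theorem \ref{T_CoherentWeakSymplecticBanachManifold} lives at the level of abstract Banach bundles, where a ``weak symplectic form'' is only a smooth fibrewise non-degenerate skew-symmetric form; no exterior derivative is involved. When you specialize to $E_i=TM_i$, a weak symplectic form on the manifold $M$ must in addition satisfy $d\omega=0$, so the reduction to the bundle theorem leaves open exactly the two assertions the paper proves: in Point (1), that $\omega=\underleftarrow{\lim}\omega_i$ is closed when each $\omega_i$ is; in Point (2), that the induced forms $\omega_i$ are closed when $\omega$ is. Neither is automatic. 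The paper handles this by invoking Lemma \ref{L_EProductOfBanachSpaces} to write the model space locally as a product $\mathbb{M}\equiv\prod_{l}\mathbb{M}'_l$ with $\mathbb{M}'_l=\ker\overline{\delta_{l-1}^{l}}$, observing that the local representative $\Omega$ of $\omega$ is ``block diagonal'' with respect to this splitting ($\Omega(X_1,X_2)=0$ for vector fields tangent to distinct factors, and $\Omega(X_1,X_2)=\overline{\Omega_l}(X_1,X_2)$ on the $l$-th factor), and deducing closedness of $\Omega$ from closedness of each block $\overline{\Omega_l}$; the converse direction follows by pulling back along the natural inclusions $\iota_i$ of $\phi_i(U_i)$ into $\phi(U)$. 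None of this appears in your proposal.

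By contrast, the ``main technical obstacle'' you do single out --- smoothness of the induced splittings and of the transported forms $\omega_i$ in Point (2) --- is already dealt with inside the proof of Theorem \ref{T_CoherentWeakSymplecticBanachManifold} (via Diagram (\ref{eq_diagram3Disymplectic}) and the sub-bundle structure of $\ker(\lambda_i)^\perp$), so it is not the missing ingredient here. A smaller issue: you have the two directions of Point (2) interchanged --- the hypothesis that each $T_x\delta_i$ is a symplectic submersion is what drives the ``if'' direction (constructing the compatible sequence $(\omega_i)$ and recovering $\omega$ as its limit), not the ``only if'' direction. To repair the proof you would need to add the closedness argument, for instance by reproducing the product decomposition of Lemma \ref{L_EProductOfBanachSpaces} and the block-diagonal computation, or by supplying some other justification that the exterior derivative behaves well under the projective limit in this setting.
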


\begin{remark}
\label{R_CanonicalProjectionSymplecticSubmersion}${}$
\begin{enumerate}
 \item Given a submersive sequence $\left( M_i,\delta_i^j\right)  _{j\geq i}$ of manifolds  and a weak symplectic form $ \omega_i$ on each $M_i$,  each map $\delta_i^j:M_j\to M_i$  is a symplectic  submersion   if  the restriction of $\omega_j$ is a symplectic form on each fibre of $\delta_i^j$  and on the orthogonal symplectic of the vertical bundle of $\delta_i^j$ we have $\omega_j=(\delta_i^j)^*\omega_i$.
\item
 Let  $M=\underleftarrow{\lim}M_i$ be a  projective limit of a submersive sequence $\left( M_i,\delta_i^j\right)  _{j\geq i}$ of manifolds and $\omega$ a weak symplectic form on $M$. We say that the canonical projection $\delta_i:M\to M_i$ is a symplectic   submersion if  the restriction of $\omega$ to each fibre $\delta^{-1}(x_i)$ is a symplectic form and $\delta_i^*\omega_i=\omega$ on the orthogonal bundle of the vertical bundle of $\delta_i$.
\end{enumerate}
\end{remark}

\subsection{Proofs of results}${}$\\

\begin{proof}[Proof of Theorem \ref{T_CoherentWeakSymplecticBanachManifold}]
From Proposition \ref{P_PropertiesWeakIsometricLinear2Forms}, we know that $\omega $ is well defined.  Now $\omega $ is a smooth $2$-form since it is a projective limit of smooth $2$ forms, which ends the proof of (1).\\
Now let  $\omega$ be  a weak symplectic form on a projective limit bundle $E=\underleftarrow{\lim}E_i,\pi=\underleftarrow{\lim}\pi_i, M=\underleftarrow{\lim} M_i$ which satisfies the assumptions of (2). Given some  $x=\underleftarrow{\lim}x_i\in M$, since $(\lambda_i)_x:\pi^{-1}(x)\to \pi_i^{-1}(x_i)$ is a symplectic  submersion of symplectic spaces, the restriction $(\lambda_i)' $ of $\lambda_i$ to $\ker(\lambda_i)_x^\perp$ is an isomorphism on $\pi_i^{-1}(x_i)$ and so $(\omega_i)_{x_i}=\{[(\lambda_i)_x]^{-1}\}^*(\omega_x)_{| \ker(\lambda_i)_x^\perp}$ is a symplectic form on $\pi_i^{-1}(x_i)$. It remains to show that $x_i\mapsto \omega_{x_i}$ is smooth.\\
Fix some $x=\underleftarrow{\lim}x_i\in M$.
There exists $\phi(U) \times \mathbb{E}$ with the following commutative diagram
\begin{eqnarray}
\label{eq_diagram3Di}
\xymatrix {
    \pi^{-1}(U) \ar[rr]^{\tau} \ar[dd] \ar[dr]^{\lambda_i}  && \phi(U)\times\mathbb{E} \ar[dr]^{\overline{\delta_i}\times \overline{\lambda_i}} \ar[dd]
    \\
    & \pi_i^{-1}(U_i) \ar[rr]^{\tau_i} \ar[dd]              && \phi_i(U_i)\times \mathbb{E}_i \ar[dd] \\
   U \ar[rr]^{\;\;\;\phi} 
   \ar[dr]^{{\delta}_i}   && \phi(U) \ar[rd]^{{\overline{\delta_i}}}\\
   & U_i \ar[rr]^{\phi_i} && \phi_i(U_i) \\
 }
\end{eqnarray}
Let $\Omega$ be the symplectic form on $\phi(U)\times \mathbb{E}$ such that $\omega=\tau^*\Omega$. According to Proposition \ref{P_StrongProjectiveLimitOfBanachBundle},  $\ker\lambda_i$ is a sub-bundle of $E$. Now, since $\omega$ is a smooth symplectic form and the orthogonal  $\ker(\lambda_i)_z^\perp$ is a supplemented space of $\ker(\lambda_i)_z$ for all $z\in M$, it follows that   $\ker(\lambda_i)_z^\perp$  is a Banach sub-bunlde of  $E$ and so the Diagram (\ref{eq_diagram3Di}) have the more precise version, after shrinking $U$ if necessary:
\begin{eqnarray}
\label{eq_diagram3Disymplectic}
\xymatrix {
    \pi^{-1}(U) \ar[rr]^{\tau} \ar[dd] \ar[dr]^{\lambda_i}&& \phi(U)\times\mathbb{K}_i\times\mathbb{H}_i \ar[dr]^{\overline{\delta_i}\times \overline{\lambda_i}}\ar[dd] |!{[dl];[dr]}\hole \\
    & \pi_i^{-1}(U_i) \ar[rr]^{\tau_i} \ar[dd] && \phi_i(U_i)\times \mathbb{E}_i \ar[dd] \\
   U \ar[rr]^\phi |!{[ur];[dr]}\hole \ar[dr]^{{\delta}_i}&& \phi(U) \ar[rd]^{{\overline{\delta_i}}}\\
   & U_i \ar[rr]^{\phi_i} && \phi_i(U_i) \\
 }
  \end{eqnarray}
where $\mathbb{K}_i$ is the Kernel of $\overline{\lambda_i}$ and $\mathbb{H}_i$ is the orthogonal of $\mathbb{K}_i$ relative to $\Omega_{\phi(x)}$ over $\phi(x)$.  Since the restriction $\overline{\lambda_i}'$ of $\overline{\lambda_i}$ to $\mathbb{H}_i$ is an isomorphism onto $\mathbb{E}_i$ and so  $\overline{\delta_i}\times\overline{\lambda_i}'$ is an isomorphism from $\phi(U)\times \mathbb{H}_i$ onto  $\phi(U)\times \mathbb{E}_i$. Thus, $(\Omega_i)=[\overline{\delta_i}\times\overline{\lambda_i}']^*\left((\Omega)_{| \phi(U)\times \mathbb{H}_i}\right)$ is a symplectic form on $\phi_i(U_i)\times \mathbb{E}_i)$ and so $\omega_i= \tau^*_i(\Omega)$ is a smooth symplectic form.\\
The end of the proof follows from Proposition \ref{P_FamilySymplecticSubmersion}.\\
\end{proof}

\begin{proof}[Proof of Corollary \ref{C_CoherentWeakSymplecticBanachManifold}]
According to the assumption of this Corollary, after applying Theorem \ref{T_CoherentWeakSymplecticBanachManifold}, the proof  will be completed if we prove that the $2$-form $\omega$ defined by the closed $2$-form $\omega_i$ is also closed and if $\omega$ is a closed $2$-form on the projective limit $M=\underleftarrow{\lim}M_i$ (each induced $2$ form $\omega_i$ induced on $M_i$ is closed).
Under the notations of the proof of Theorem \ref{T_CoherentWeakSymplecticBanachManifold}, we have
\begin{description}
\item[]
$\mathbb{E}_i=\mathbb{M}_i$  and $\mathbb{E}=\mathbb{M}$;
\item[]
$\lambda_i^j=T\delta_i^j$, $\overline{\ell_i^j }= \overline{\delta_i^j}$, $\lambda_i=T\delta_i $;
\item[]
$\tau_i=T\phi_i$, $\tau=T\phi$.
\end{description}
We can apply the context  of Lemma \ref{L_EProductOfBanachSpaces} and so if  $\mathbb{M}'_i=\ker \overline{\delta_i^j}$, then $\mathbb{M}_n$ is isomorphic to $\displaystyle\prod_{i=0}^n\mathbb{M}'_i$ and so $\mathbb{M}\equiv \displaystyle\prod_{i=0}^\infty\mathbb{M}'_i$. According to  Diagram \ref{eq_diagram3Disymplectic}  in our context, we have
\begin{eqnarray}
\label{eq_diag_Corollary3Disymplectic}
\xymatrix {
    \pi^{-1}(U) \ar[rr]^{\tau} \ar[dd] \ar[dr]^{\lambda_i}&& \phi(U)\times\displaystyle\prod_{l>i}\mathbb{M}'_l\times\displaystyle\prod_{l=0}^i \mathbb{M}'_l\ar[dr]^{\overline{\delta_i}\times \overline{\lambda_i}}\ar[dd] |!{[dl];[dr]}\hole \\
    & \pi_i^{-1}(U_i) \ar[rr]^{\tau_i} \ar[dd] && \phi_i(U_i)\times\displaystyle\prod_{l=0}^i \mathbb{M}'_l\ar[dd] \\
   U \ar[rr]^\phi |!{[ur];[dr]}\hole \ar[dr]^{{\delta}_i}&& \phi(U) \ar[rd]^{{\overline{\delta_i}}}\\
   & U_i \ar[rr]^{\phi_i} && \phi_i(U_i) \\
 }
 \end{eqnarray}
 and $\phi(U)$ is an open set in $\displaystyle\prod_{i=0}^\infty\mathbb{M}'_i$ and $\phi_i(U_i)$ is an open set of $\displaystyle\prod_{l=0}^i \mathbb{M}'_l$.  Thus, $\phi_i(U_i)$ is of type $\displaystyle\prod_{l=0}^i U'_l$ where $U'_l$ is an open set of $\mathbb{M}'_i$ and $\phi(U)$ is of type  $\displaystyle\prod_{l=0}^\infty U'_l$  where $U'_l$ is an open set of $\mathbb{M}'_l$ and with only a finite number of $l\geq i$  for which $U_l\not=\mathbb{M}'_l$.\\

 (1) Assume that $\omega$ is a projective limit of the sequence $(\omega_i)_{i \in \mathbb{N}}$.  As in the proof of Theorem \ref{T_CoherentWeakSymplecticBanachManifold}   let  $\Omega_i$ be  the form on $\phi_i(U_i)$ induced
  by $\omega_i$ and we denote by $\Omega$ the symplectic form on $\phi(U)$  induced by $\omega$ according to the context of Diagram \ref{eq_diag_Corollary3Disymplectic}.  If $\iota_l$ be the
  natural inclusion of $U'_l$ in $\displaystyle\prod_{l=0}^i\mathbb{M}'_l$,  we set $\overline{\Omega_l}=\iota_i^* \Omega_i$ for $l\geq i$. Note that $\overline{\Omega_l}$ does not depend on the choice of the integer $i\geq l$.  As $\Omega_i$ is closed , it follows that  $\overline{\Omega_l}$ is closed. Note that each subbundle $U'_l\times \mathbb{E}'_l$ is the tangent bundle of $U'_l$.  But, from the
 construction of $\omega$ ( and so $\Omega$ ), if $X_1$ and $X_2$ are vector fields on $\phi(U)$ which are tangent to $U'_{l_1}$ and $U'_{l_2}$ respectively, we have $\Omega(X_1, X_2)=0$ if $l_1\not=l_2$ and $\Omega(X_1, X_2)=\overline{\Omega_l}(X_1, X_2)$ if $=l_2=l_1=l$. This implies that $\Omega$ is closed.\\

(2) Assume that $\omega$ is a symplectic form such that $T_x\delta_i:T_xM\to T_{x_i}M_i$ is a symplectic   submersion. Then from Theorem \ref{P_PropertiesWeakIsometricLinear2Forms} (2), $\omega$ induces a  non degenerate $2$-form $\omega_i$ on $M_i$.

Again let $\Omega$ (resp. $\Omega_i$) be the $2$-form on $\phi(U)$ (resp. $\phi_i(U_i)$)  according to the context of Diagram \ref{eq_diag_Corollary3Disymplectic}. We must show that each $\Omega_i$ is closed.  Since $\Omega$ is the projective limit of the sequence $(\Omega_i)_{i \in \mathbb{N}}$, according to Theorem \ref{T_CoherentWeakSymplecticBanachManifold} (2).  Thus, as previously, if $X_1$ and $X_2$ are vector fields on $\phi(U)$ which are tangent to $U'_{l_1}$ and $U'_{l_2}$ respectively, we have $\Omega(X_1, X_2)=0$ if $l_1\not=l_2$ and $\Omega(X_1, X_2)=\overline{\Omega_l}(X_1, X_2)$ if $=l_2=l_1=l$. Thus $\Omega_i=\iota_i^*\Omega$ if $\iota_i$ is the natural inclusion of $\phi_i(U_i)$ in $\phi(U)$. It follows that each $\omega_i$ is closed.\\
\end{proof}

\section{Weak symplectic forms on a submersive projective  sequence of reflexive Banach bundles and Darboux-Bambusi assumption}
\label{___WeakSymplecticFormsOnProjectiveSequenceReflexiveBanachBundles-Darboux-BambusiAssumption}

\subsection{Moser's method and Darboux-Bambusi Theorem}
\label{__MosersMethodAndDarbouxTheorem}


We recall  the  following  generalization of Moser's Lemma (see \cite{Pe}).\\

Let $M$ be a manifold modeled on a reflexive Banach space $\mathbb{M}$. Consider a weak symplectic form $\omega$  on $M$. Then $\omega^{\flat}:TM\to T^{\ast}M$ is an injective bundle morphism. According to section \ref{linearsymplectic}, we denote by $\widehat{T_{x}M}$ the
Banach space which is the completion of $T_{x}M$ provided with the norm
$||\;||_{\omega_{x}}$ associated to some norm $||\;||$ on $T_xM$. The Banach space
 $\widehat{T_{x}M}$ does not depend on this choice.
Then $\omega_{x}$ can be extended to a continuous bilinear map $\hat{\omega}_{x}$ on $T_{x}M\times\widehat{T_{x}M}$ and $\omega_{x}^{\flat}$ becomes an isomorphism from $T_{x}M$ to
$(\widehat{T_{x}M})^{\ast}$. We set
\[
\widehat{TM}=\bigcup_{x\in M}\widehat{T_{x}M}\;\text{ and }\;(\widehat
{TM})^{\ast}=\bigcup_{x\in M}(\widehat{T_{x}M})^{\ast}.
\]

\begin{theorem}[Moser's Lemma]
\label{T_MoserLemmaOnAReflexifBanachSpace}
Let $\omega$ be a weak symplectic form on a Banach manifold $M$ modeled on a reflexive Banach space $\mathbb{M}$. Assume
that we have the following properties:
\begin{enumerate}
\item[(i)]
There exists a neighbourhood $U$ of $x_{0}\in M$ such that
$\widehat{TM}_{|U}$ is a trivial Banach bundle whose typical fibre is the Banach space $(\widehat{T_{x_{0}}M},||\;||_{\omega_{x_{0}}})$;
\item[(ii)]
$\omega$ can be extended to a smooth field of continuous bilinear forms on \\
$TM_{|U}\times\widehat{TM}_{|U}$.
\end{enumerate}

Consider a family $\{\omega^{t}\}_{0\leq t\leq1}$ of closed
$2$-forms which smoothly depends on $t$ with the following properties:
\begin{enumerate}
\item[--]
$\omega^{0}=\omega$ and $\forall t \in \left[ 0,1 \right], \omega_{x_{0}}^{t}={\omega}_{x_{0}}$;
\item[--]
$\;\omega^{t}$ can be extended to a smooth field of continuous bilinear forms on $TM_{|U}\times\widehat{TM}_{|U}$.
\end{enumerate}

Then there exists a neighbourhood $V$ of $x_{0}$ such that each $\omega^{t}$
is a symplectic form on $V$ and there exists a family $\{F_{t}\}_{0\leq t\leq1}$ of diffeomorphisms $F_{t}$ from a neighbourhood
$V_{0}\subset V$ of $x_{0}$ to a neighbourhood  $F_{t}(V_{0})\subset V$  of $x_0$ such
that $F_{0}=\operatorname{Id}$ and $F_{t}^{\ast}\omega^{t}=\omega$, for all $0\leq t\leq1$.
\end{theorem}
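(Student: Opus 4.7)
The plan is to carry out the classical Moser trick, the only difficulty being that $(\omega^t)^\flat$ takes values in $(\widehat{TM})^*$ rather than $T^*M$, so we must arrange that the primitive of $\dot\omega^t$ lives in $(\widehat{TM})^*$ in order to invert and obtain a vector field.

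First I would establish that $(\omega^t)^\flat\colon T_xM \to (\widehat{T_xM})^*$ is an isomorphism for all $(t,x)$ in a neighborhood of $[0,1]\times\{x_0\}$. At $t=0$, $x=x_0$, this is precisely the Bambusi isomorphism (Lemma 2.8 in \cite{Bam}), valid because $\mathbb{M}$ is reflexive. The trivialization (i) identifies $\widehat{TM}_{|U}$ with $U\times\widehat{T_{x_0}M}$, and by assumption (ii) both $\omega$ and $\omega^t$ extend to smooth sections of continuous bilinear forms on $TM_{|U}\times\widehat{TM}_{|U}$. Therefore $(t,x)\mapsto(\omega^t_x)^\flat$ is a smooth map into $\mathrm{Isom}(T_xM,(\widehat{T_xM})^*)$ at $(t,x_0)$; by openness of invertibility and compactness of $[0,1]$, it remains an isomorphism for $(t,x)\in[0,1]\times V$ for some neighborhood $V$ of $x_0$. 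Shrinking $V$ if necessary, each $\omega^t$ is then weakly nondegenerate on $V$, i.e.\ a symplectic form on $V$.

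Next I would produce the primitive. Working in a chart centered at $x_0$, set $\mu_t=\dot\omega^t$, a smooth family of closed $2$-forms vanishing at the origin (since $\omega^t_{x_0}$ is constant in $t$). The standard homotopy operator applied to the contraction $s\mapsto sx$ gives
\begin{equation*}
\alpha_t(x)(v)=\int_0^1 s\,\mu_t(sx)(x,v)\,ds,
\end{equation*}
a smooth family of $1$-forms with $d\alpha_t=\mu_t$ and $\alpha_t(x_0)=0$. The crucial point, which is where assumption (ii) is used, is that because $\mu_t$ extends to a continuous bilinear form on $T_xM\times\widehat{T_xM}$, the integrand above is continuous in $v\in\widehat{T_xM}$; hence $\alpha_t(x)\in(\widehat{T_xM})^*$. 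Now define the time-dependent vector field
\begin{equation*}
X_t(x)=-\bigl[(\omega^t_x)^\flat\bigr]^{-1}\!\bigl(\alpha_t(x)\bigr)\in T_xM,
\end{equation*}
well defined and smooth on $V$ by the previous step, and satisfying $X_t(x_0)=0$.

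Finally I would integrate $X_t$. Since $X_t(x_0)=0$ and $X_t$ is smooth, the flow $F_t$ of the non-autonomous ODE $\dot F_t=X_t\circ F_t$, $F_0=\mathrm{Id}$, exists on $[0,1]$ for initial data in some neighborhood $V_0\subset V$ of $x_0$. The Moser identity then gives
\begin{equation*}
\frac{d}{dt}\bigl(F_t^*\omega^t\bigr)=F_t^*\bigl(\mathcal{L}_{X_t}\omega^t+\dot\omega^t\bigr)=F_t^*\bigl(d(\iota_{X_t}\omega^t)+\mu_t\bigr)=F_t^*(-d\alpha_t+d\alpha_t)=0,
\end{equation*}
where closedness of $\omega^t$ was used in Cartan's formula. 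Hence $F_t^*\omega^t=\omega$ for all $t\in[0,1]$. The main obstacle, and the only point where reflexivity and the extension hypothesis enter essentially, is verifying that the Poincaré primitive $\alpha_t$ of $\mu_t$ lies in $(\widehat{TM})^*$; without this, the inversion defining $X_t$ is not available, which is exactly why the Darboux theorem fails for general weak symplectic Banach manifolds.
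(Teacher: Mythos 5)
Your proposal is correct and follows essentially the same route as the paper's own sketch: trivialize $\widehat{TM}$ over $U$, use reflexivity and hypothesis (ii) to make $(\omega^t_x)^\flat$ an isomorphism onto $(\widehat{T_xM})^*$ near $x_0$ by openness of invertibility, build the Poincar\'e primitive $\alpha_t$ by the radial homotopy formula so that it lands in $(\widehat{T_xM})^*$, invert to get the time-dependent vector field, and run the standard Moser computation. Your explicit remark that the whole point is forcing $\alpha_t$ into $(\widehat{TM})^*$ so that the inversion is legitimate is exactly the content the paper relies on.
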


\begin{proof}[sketch for more details see \cite{Pe}]
 Without loss of generality, we may assume that $U$ is an open
neighbourhood of $0$ in $\mathbb{M}$ and $\widehat{TM}_{|U}=U\times
\widehat{\mathbb{M}}$. Therefore, $U\times\widehat{\mathbb{M}}$ is a trivial
Banach bundle modeled on the Banach space $(\widehat{\mathbb{M}%
},||\;||_{\omega_{0}})$.
Since $\omega$ can be extended to a non-degenerate skew symmetric bilinear
form (again denoted $\omega$) on $U\times(\mathbb{M}\times\widehat{\mathbb{M}%
})$ then $\omega^{\flat}$ is a Banach bundle isomorphism from $U\times
\mathbb{M}$ to $U\times\widehat{\mathbb{M}}^{\ast}$.\newline We set
$\dot{\omega}^{t}=\frac{d}{dt}\omega^{t}$. Since each $\omega^{t}$ is closed
for $0\leq t\leq1$, we have :
\[
d\dot{\omega}^{t}=\frac{d}{dt}(d\omega^{t})=0
\]
and so $\dot{\omega}^{t}$ is closed. After shrinking U if necessary, from
the Poincar\'{e} Lemma, there exists a $1$-form $\alpha^{t}$ on $U$ such that
$\dot{\omega}^{t}=d\alpha^{t}$ for all $0\leq t\leq1$. In fact $\alpha_{t}$
can be given by
\[
\alpha_{x}^{t}=\int_{0}^{1}s.(\dot{\omega}_{sx}^{t})^{\flat}(x)ds.
\]
Since at $x=0$, $(\omega_{x_{0}}^{t})^{\flat}$ is an isomorphism from
$\mathbb{M}$ to $\widehat{\mathbb{M}}^{\ast}$, there exists a
neighbourhood $V$ of $0$ such that $(\omega_{x}^{t})^{\flat}$ is an
isomorphism from $\mathbb{M}$ to $\widehat{\mathbb{M}}^{\ast}$ for all $x\in
V$ and $0\leq t\leq1$. In particular, $\omega^{t}$ is a symplectic form on
$V$. Moreover $x\mapsto(\dot{\omega}_{x}^{t})^{\flat}$ is smooth and takes
values in $\mathcal{L}(\mathbb{M},\widehat{\mathbb{M}}^{\ast})$.  We set $X_{x}^{t}:=-((\omega_{x}^{t})^{\flat
})^{-1}(\alpha_{x}^{t})$. It is a well defined time dependent vector field and
let $\operatorname{Fl}_{t}$ be the flow generated by $X^{t}$ defined on some neighbourhood
$V_{0}\subset V$ of $0$. As for all $t\in\lbrack0,1]$,  $\dot{\omega
}_{x_{0}}^{t}=0$, then $X_{x_{0}}^{t}=0$. Thus, for all $t\in\lbrack0,1]$,
$F_{t}(x_{0})=x_{0}$ . As classically, we have
\[
\frac{d}{dt}\operatorname{Fl}_{t}^{\ast}\omega^{t}=\operatorname{Fl}_{t}^{\ast}(L_{X^{t}}\omega^{t}%
)+\operatorname{Fl}_{t}^{\ast}\frac{d}{dt}\omega^{t}=\operatorname{Fl}_{t}^{\ast}(-d\alpha^{t}+\dot{\omega
}^{t})=0.
\]
Thus $\operatorname{Fl}_{t}^{\ast}\omega^{t}=\omega$.
\end{proof}

Now as a Corollary of Theorem \ref{T_MoserLemmaOnAReflexifBanachSpace}, we obtain the Bambusi's
version of Darboux Theorem (\cite{Bam}, Theorem 2.1).

\begin{theorem}
[ Darboux-Bambusi Theorem]\label{T_localDarboux} Let $\omega$ be a weak symplectic
form on a Banach manifold $M$ modelled on a reflexive Banach space
$\mathbb{M}$. Assume that the assumptions (i) and (ii) of Theorem
\ref{T_MoserLemmaOnAReflexifBanachSpace} are satisfied. Then there exists a chart $(V,F)$ around
$x_{0}$ such that $F^{\ast}\omega_{0}=\omega$ where $\omega_{0}$ is the
constant form on $F(V)$ defined by $(F^{-1})^{\ast}\omega_{x_{0}}$.\\
\end{theorem}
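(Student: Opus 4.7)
The plan is to deduce the theorem directly from Moser's Lemma (Theorem \ref{T_MoserLemmaOnAReflexifBanachSpace}) applied to the standard affine interpolation between $\omega$ and a constant model form at $x_0$. First I would choose an auxiliary chart identifying a neighbourhood of $x_0$ with an open set $U\subset \mathbb{M}$ sending $x_0$ to $0$, and transport $\omega$ to $U$ (still denoting it $\omega$). Let $\omega_c$ denote the constant $2$-form on $U$ whose value at every point equals the bilinear form $\omega_0\in \Lambda^2\mathbb{M}^{\ast}$ given by the value of $\omega$ at $0$, and consider the family
\begin{equation*}
\omega^t := (1-t)\,\omega + t\,\omega_c, \qquad t \in [0,1].
\end{equation*}
Then $\omega^0=\omega$, $\omega^1=\omega_c$, each $\omega^t$ is closed (since $\omega$ is closed and $\omega_c$ is constant), depends smoothly on $t$, and satisfies $\omega^t_0 = \omega_0$ for every $t$.

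The next step is to check that $\{\omega^t\}$ meets hypotheses (i) and (ii) of Theorem \ref{T_MoserLemmaOnAReflexifBanachSpace}. Condition (i) is intrinsic to the point $x_0$ and is inherited directly from the hypothesis on $\omega$. For condition (ii), the form $\omega$ extends smoothly to $TU\times \widehat{TU}$ by assumption; the constant form $\omega_c$ extends as well, because by the very construction of $\widehat{T_0M}=\widehat{\mathbb{M}}$ the bilinear form $\omega_0$ admits a continuous extension to $\mathbb{M}\times \widehat{\mathbb{M}}$, and the trivialization of $\widehat{TU}$ provided by~(i) transports this value constantly to every fibre, yielding a smooth extension. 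An affine combination of two such smooth fields of extended bilinear forms is again one, so (ii) holds uniformly in $t\in[0,1]$.

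Applying Moser's Lemma to the family $\{\omega^t\}$ then produces a neighbourhood $V_0$ of $0$ and a family of diffeomorphisms $F_t:V_0\to F_t(V_0)\subset U$ with $F_0=\mathrm{Id}$, $F_t(0)=0$ (since the time-dependent vector field constructed in the proof of Moser's Lemma vanishes at $0$), and $F_t^{\ast}\omega^t=\omega$. Taking $t=1$ gives $F_1^{\ast}\omega_c=\omega$ on $V_0$; composing $F_1$ with the auxiliary chart yields the required Darboux chart $(V,F)$ around $x_0$ in which $F^{\ast}\omega_0=\omega$, where $\omega_0$ is the constant form on $F(V)$ determined by $(F^{-1})^{\ast}\omega_{x_0}$ (any discrepancy between $\omega_c$ and this form being absorbed by the constant linear change of coordinates $DF^{-1}(x_0)$, which preserves the Darboux form). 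The main obstacle, and essentially the only nontrivial verification, is checking that the extension condition (ii) is preserved by the linear interpolation; once this is done, the result follows mechanically from Moser's machinery, with the reflexivity of $\mathbb{M}$ entering only through its use in Theorem \ref{T_MoserLemmaOnAReflexifBanachSpace} itself.
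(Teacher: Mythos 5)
Your proof is correct and follows essentially the same route as the paper, which obtains Theorem \ref{T_localDarboux} as a direct corollary of Moser's Lemma applied to the affine interpolation between $\omega$ and the constant form $\omega_c$ (the paper makes this family explicit in \S 4.2, Case 1, as $\omega^{t}=\omega^{0}+t\overline{\omega}$). Your verification that the constant form inherits the extension property (ii) from the definition of $\widehat{T_{x_0}M}$ and the trivialization in (i) is exactly the point that makes the reduction work.
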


\begin{definition}
The chart $(V,F)$ in Theorem \ref{T_localDarboux} will be called a Darboux chart
around $x_{0}$.
\end{definition}

\subsection{Projective sequence of   weak symplectic bundle reflexive Banach bundle with Darboux-Bambusi assumptions}${}$\\
Let $\mathbb{E}=\underleftarrow{\lim}\mathbb{E}_{i}$ be a projective limit of a
projective sequence of reflexive Banach spaces $\left(   \mathbb{E}_{i},\overline{\lambda_i^j}\right)  _{j\geq i}
$.  We can  provide  each Banach space $\mathbb{E}_i$ with a norm $\left\Vert\;\right\Vert _{i}$  such
that $||\overline{\lambda_i^{i+1}}||^{\operatorname{op}}_i\leq 1$ for $i\in \mathbb{N}$.

We consider a sequence $\left(  \omega_{i}\right)_{i \in \mathbb{N}}$ of weak symplectic forms on $\mathbb{E}_i$ and
 let $\omega_{i}^{\flat}:\mathbb{E}_{i}\to \mathbb{E}_{i}^{\ast}$ be the associated bounded linear operator. According to notations in Remark \ref{indnorm}, we consider the norm $||u||_{\omega_{i}}=||\omega_{i}^{\flat}(u)||_{i}^{\ast}$ where $||\;||_{i}^{\ast}$ is the
canonical norm on $\mathbb{E}_{i}^{\ast}$ associated to $||\;||_{i}$.  We have seen that
the inclusion of the Banach space $(\mathbb{E},||\;||_{i})$ in the normed space  $(\mathbb{E}_{i},||\;||_{\omega_{i}})$ is continuous and we have denoted by  $\widehat{\mathbb{E}}_{i}$ the Banach space which is the completion of $(\mathbb{E}_{i},||\;||_{\omega_{i}})$. Recall that from Remark \ref{indnorm}, the Banach space  $\widehat{\mathbb{E}}_{i}$ does not depend on the choice of the norm $||\;||_i$ on $\mathbb{E}_i$.
According to section \ref{linearsymplectic} (before  Remark {indnorm}), $\omega_{i}^{\flat}$ can be extended to a symplectic  submersion between $\hat{\mathbb{E}}_{i}$ and $\mathbb{E}_{i}^{\ast}$. Moreover,
$\omega_{i}^{\flat}$ is an isomorphism from $\mathbb{E}_{i}$ to $\widehat
{\mathbb{E}}_{i}^{\ast}$.

\begin{lemma}
\label{L_wideEiProjective}
${}$
\begin{enumerate}
\item
The sequence $(\widehat{\mathbb{E}}_{i}^{*})_{i\in \mathbb{N}}$ is a projective sequence of Banach spaces and so
 $\widehat{\mathbb{E}}^{*}=\underleftarrow{\lim}\widehat{\mathbb{E}}_{i}^{*}$ is well defined. Moreover, if $\overline{\lambda_i^j}$ is surjective and its kernel is split,
then the bonding map  $\widehat{\overline{\lambda_i^j}}=\widehat{\mathbb{E}}^*_j\to \widehat{\mathbb{E}}^*_i$  also satisfies this assumption.
\item
The projective limit $\omega^\flat=\underleftarrow{\lim}\omega_i^\flat$ is well defined and is an isomorphism from $\mathbb{E}$ to $\widehat{\mathbb{E}}^{*}$
\end{enumerate}
\end{lemma}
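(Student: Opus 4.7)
The plan is straightforward: since each $\omega_i^\flat : \mathbb{E}_i \to \widehat{\mathbb{E}}_i^{\ast}$ is an isomorphism (\cite{Bam} Lemma 2.8), we can simply transport the original bonding maps $\overline{\lambda_i^j}$ onto the sequence $(\widehat{\mathbb{E}}_i^{\ast})$ by conjugation. Concretely, for $j \geq i$, I would define
\[
\widehat{\overline{\lambda_i^j}} := \omega_i^\flat \circ \overline{\lambda_i^j} \circ (\omega_j^\flat)^{-1} : \widehat{\mathbb{E}}_j^{\ast} \longrightarrow \widehat{\mathbb{E}}_i^{\ast}.
\]
Each factor is a bounded linear map between Banach spaces, so $\widehat{\overline{\lambda_i^j}}$ is continuous. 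The cocycle identity $\widehat{\overline{\lambda_i^k}} = \widehat{\overline{\lambda_i^j}} \circ \widehat{\overline{\lambda_j^k}}$ follows at once from $\overline{\lambda_i^k} = \overline{\lambda_i^j} \circ \overline{\lambda_j^k}$ after cancelling the middle factor $(\omega_j^\flat)^{-1} \circ \omega_j^\flat$. Hence $(\widehat{\mathbb{E}}_i^{\ast}, \widehat{\overline{\lambda_i^j}})_{j\geq i}$ is a projective sequence of Banach spaces and $\widehat{\mathbb{E}}^{\ast} := \underleftarrow{\lim}\widehat{\mathbb{E}}_i^{\ast}$ is well-defined.

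For the second half of (1), suppose $\overline{\lambda_i^j}$ is surjective; then $\widehat{\overline{\lambda_i^j}}$ is surjective as well, being sandwiched between two isomorphisms. If in addition $\ker \overline{\lambda_i^j}$ admits a topological complement $\mathbb{H}$ in $\mathbb{E}_j$, then applying the isomorphism $\omega_j^\flat$ yields a direct sum decomposition $\widehat{\mathbb{E}}_j^{\ast} = \omega_j^\flat(\ker \overline{\lambda_i^j}) \oplus \omega_j^\flat(\mathbb{H})$, and from the defining formula $\omega_j^\flat(\ker \overline{\lambda_i^j}) = \ker \widehat{\overline{\lambda_i^j}}$, proving that this kernel is split.

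Assertion (2) is then a formal consequence. Indeed, the very definition of $\widehat{\overline{\lambda_i^j}}$ is exactly the relation $\widehat{\overline{\lambda_i^j}} \circ \omega_j^\flat = \omega_i^\flat \circ \overline{\lambda_i^j}$, so the family $(\omega_i^\flat)_{i \in \mathbb{N}}$ is a morphism of projective sequences from $(\mathbb{E}_i, \overline{\lambda_i^j})$ to $(\widehat{\mathbb{E}}_i^{\ast}, \widehat{\overline{\lambda_i^j}})$, and hence induces a continuous linear map $\omega^\flat = \underleftarrow{\lim}\omega_i^\flat : \mathbb{E} \to \widehat{\mathbb{E}}^{\ast}$. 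Inverting the commutation gives $(\omega_i^\flat)^{-1} \circ \widehat{\overline{\lambda_i^j}} = \overline{\lambda_i^j} \circ (\omega_j^\flat)^{-1}$, so the family $\bigl((\omega_i^\flat)^{-1}\bigr)_{i \in \mathbb{N}}$ is also a morphism of projective sequences, whose projective limit supplies a two-sided continuous inverse to $\omega^\flat$. No real obstacle is expected: the only substantive point is recognising that the isomorphism statement of \cite{Bam} Lemma 2.8 lets us transfer the whole projective structure to the dual completions in a functorial way; everything else is the formal behaviour of projective limits with respect to isomorphisms.
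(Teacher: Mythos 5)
Your proof is correct and follows essentially the same route as the paper: the paper also defines the bonding maps on the dual completions by conjugation, $\widehat{\overline{\lambda_i^j}}=\omega_i^\flat\circ \overline{\lambda_i^j}\circ (\omega_j^\flat)^{-1}$, using that each $\omega_i^\flat:\mathbb{E}_i\to\widehat{\mathbb{E}}_i^{\ast}$ is an isomorphism, and then declares (2) obvious. You merely spell out the cocycle identity, the split-kernel transfer, and the two-sided inverse in more detail (and, incidentally, write the correct subscript $(\omega_j^\flat)^{-1}$ where the paper has a typo).
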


\begin{proof}
(1) It is sufficient to show that $\overline{\lambda_i^{j}}$ and $\omega_i^\flat$ give rise to a  map $\widehat{\overline{\lambda_i^j}}$ from $ \widehat{\mathbb{E}}_{j}^{\ast}$ into $ \widehat{\mathbb{E}}_{i}^{\ast}$ and if $\overline{\lambda_i^{j}}$ is surjective and with a split kernel so is $\widehat{\overline{\lambda_i^j}}$. Indeed since $\omega_{i}^{\flat}$ is an isomorphism from $\mathbb{E}_{i}$ to $\widehat{\mathbb{E}}_{i}^{\ast}$, the bonding map $\widehat{\overline{\lambda_i^j}}=\omega_i^\flat\circ \overline{\lambda_i^j}\circ (\omega_i^\flat)^{-1}$ satisfied the announced properties in (1).\\
(2) is obvious.\\
\end{proof}
\newline

Now we consider a reduced  projective  sequence $\left(  E_{i},\pi_{i},M_{i}\right) _{\underleftarrow{i}}  $ of Banach vector bundles where the typical fibre $\mathbb{E}_{i}$ is reflexive. The projective limit $E=\underleftarrow{\lim}E_{i}$ has a structure Fr\'echet bundle
over $M=\underleftarrow{\lim}M_{i}$ with typical fibre $\mathbb{E}=\underleftarrow{\lim}\mathbb{E}_{i}$ (cf.  Proposition \ref{P_ProjectiveLimitOfBanachVectorBundles}).

Consider a  sequence $\left(  \omega_{i}\right) _{i _in \mathbb{N}}$ of compatible weak symplectic forms $\omega_i$ on $E_{i}$. According to the previous notations, since $\mathbb{E}_{i}$ is reflexive, we denote by
$(\widehat{E_{i}})_{x_{i}}$ the Banach space which is the completion of
$(E_{i})_{x_{i}}$ provided with the norm $||\;||_{(\omega_{_{i}})_{x_{i}}}$.
Then $(\omega_{i})_{x_{i}}$ can be extended to a continuous bilinear map
$(\hat{\omega}_{i})_{x_{i}}$ on $(E_{i})_{x_{i}}\times(\widehat{E_{i}}%
)_{x_{i}}$ and $(\omega_{i})_{x_{i}}^{\flat}$ becomes an isomorphism from
$(E_{i})_{x_{i}}$ to $(\widehat{E_{i}})_{x_{i}}^{\ast}$. We set
\[
\widehat{E_{i}}=\bigcup_{x_{i}\in M_{i}}(\widehat{E_{i}})_{x_i},\;\;\;\; \widehat{E_{i}^{\ast}}=\bigcup_{x_{i}\in M_{i}}(\widehat{E_{i}^{\ast}})_{x_{i}}
\]
According to the assumption of Theorem \ref{T_localDarboux}   we introduce the following terminology:

\begin{definition}
\label{D_DarbouxBambusiProjective}
Let $\left(  E_{i},\pi_{i},M_{i} \right) _{\underleftarrow{i}}$ be a reduced  projective sequence of
 Banach bundles whose typical fibre $\mathbb{E}_{i}$ is reflexive. Consider a  sequence $\left(  \omega_{i}\right)_{i _in \mathbb{N}} $ of compatible weak symplectic forms $\omega_{i}$ on $E_{i}$. We say that the sequence $\left(  \omega_{i}\right)  _{i\in\mathbb{N}}$ satisfies the Bambusi-Darboux assumption around $x^{0}\in M$ if there exists a projective limit chart $U=\underleftarrow{\lim}U_{i}$ around $x^{0}$ such that:
\begin{description}
\item[(i)]
for each $i\in \mathbb{N}$, $(\widehat{E}_{i})_{|U_{i}}$ is a trivial Banach
bundle;
\item[(ii) ]
for each $i\in \mathbb{N}$, $\;\omega_{i}$ can be extended to a smooth field
of continuous bilinear forms on $(E_{i})_{|U_{i}}\times(\widehat{E}_{i})_{|U_{i}}$.
\end{description}
\end{definition}

Under these assumptions we have:
\begin{proposition}
\label{P_DarbouxBambusiProjectiveProperties}
Consider a  sequence $\left(  \omega_{i}\right) _{i _in \mathbb{N}}$ of compatible  symplectic forms $\omega_{i}$ on $E_{i}$ which satisfies the Bambusi-Darboux assumption around $x^{0}\in M$. Then we have the following properties:
\begin{enumerate}
\item
The projective limit $\widehat{E}^*_{|U}=\underleftarrow{\lim
}\widehat{E_{i}^*}_{| U_{i}}$ is well defined and is a trivial Fr\'echet
bundle with typical fibre $\widehat{\mathbb{E}}=\underleftarrow{\lim
}\widehat{\mathbb{E}}_{i}$.
\item
The sequence $\left(  \omega_{i}^{\flat}\right)$ of isomorphisms from ${E_i}_{| U_i}$
to ${\widehat{E^*_i}}_{| {U}_i}$ induces an isomorphism from $E_{|U}$ to $\widehat{E}^\ast_{|U}$.
\end{enumerate}
\end{proposition}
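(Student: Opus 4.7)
The plan is to lift Lemma \ref{L_wideEiProjective} from the linear setting to the bundle setting, and then read off (1) as a consequence of the isomorphism produced in (2). First I would construct bonding maps for the sequence $(\widehat{E_i^*}|_{U_i})$ fibrewise by transport: for $x_j\in U_j$ with $x_i=\delta_i^j(x_j)$, set
\[
(\widehat{\lambda_i^j})_{x_j}:=(\omega_i^\flat)_{x_i}\circ(\lambda_i^j)_{x_j}\circ\bigl((\omega_j^\flat)_{x_j}\bigr)^{-1},
\]
which is the bundle version of the formula in the proof of Lemma \ref{L_wideEiProjective}(1). Assumptions (i) and (ii) of Definition \ref{D_DarbouxBambusiProjective} together say that $\omega_i^\flat$ is a smooth bundle isomorphism from $E_i|_{U_i}$ onto $\widehat{E_i^*}|_{U_i}$, so each $\widehat{\lambda_i^j}$ is a smooth bundle morphism; the cocycle identity $\widehat{\lambda_i^j}\circ\widehat{\lambda_j^k}=\widehat{\lambda_i^k}$ follows fibrewise from the corresponding identity for the $\lambda_i^j$. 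This produces a projective sequence of Banach bundles whose limit $\widehat{E}^*|_U=\underleftarrow{\lim}\widehat{E_i^*}|_{U_i}$ is a well-defined Fréchet bundle with typical fibre $\widehat{\mathbb{E}}^*=\underleftarrow{\lim}\widehat{\mathbb{E}}_i^*$ by Lemma \ref{L_wideEiProjective}(1).

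By the very definition of $\widehat{\lambda_i^j}$ one has $\widehat{\lambda_i^j}\circ\omega_j^\flat=\omega_i^\flat\circ\lambda_i^j$, so the family $(\omega_i^\flat)_{i\in\mathbb{N}}$ is a morphism of projective sequences from $(E_i|_{U_i},\lambda_i^j)$ to $(\widehat{E_i^*}|_{U_i},\widehat{\lambda_i^j})$; passing to projective limits yields a smooth Fréchet-bundle morphism $\omega^\flat:=\underleftarrow{\lim}\omega_i^\flat:E|_U\to\widehat{E}^*|_U$. Since each $\omega_i^\flat$ is an isomorphism (Bambusi's Lemmas 2.7--2.8 recalled in Section \ref{linearsymplectic}, using reflexivity of $\mathbb{E}_i$), the inverse family $((\omega_i^\flat)^{-1})_{i\in\mathbb{N}}$ is itself a morphism of projective sequences and its limit is a two-sided inverse of $\omega^\flat$, giving point (2). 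Finally, since $U=\underleftarrow{\lim}U_i$ is a projective limit chart, $E|_U$ is trivial with typical fibre $\mathbb{E}=\underleftarrow{\lim}\mathbb{E}_i$ (Proposition \ref{P_ProjectiveLimitOfBanachVectorBundles}); transporting this triviality through the isomorphism $\omega^\flat$ furnishes the trivialization of $\widehat{E}^*|_U$ claimed in point (1).

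The main obstacle is to verify that each $\omega_i^\flat$ is a genuine \emph{smooth} bundle isomorphism from $E_i|_{U_i}$ onto $\widehat{E_i^*}|_{U_i}$, and not merely a fibrewise isomorphism: without this, the $\widehat{\lambda_i^j}$ defined above would be only set-theoretic and no projective limit would exist. Fibrewise bijectivity is the reflexive case of Bambusi's argument. Bundle-smoothness of $\omega_i^\flat$ and of $(\omega_i^\flat)^{-1}$ is where assumptions (i) and (ii) really enter in tandem: (i) provides a fixed model $\widehat{\mathbb{E}}_i$ against which to test smoothness, and (ii) expresses $\omega_i$ as a smooth field of continuous bilinear forms on $E_i|_{U_i}\times\widehat{E}_i|_{U_i}$, so that $\omega_i^\flat$ is smooth into $\widehat{E_i^*}|_{U_i}$ with smooth inverse by the Banach-bundle open mapping theorem applied trivialization by trivialization. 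Once this technical point is settled, the projective-limit machinery from Section 2 takes care of everything remaining.
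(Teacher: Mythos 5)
Your proposal is correct and follows essentially the same route as the paper: both define the bonding maps of the dual hat-bundles by conjugating $\lambda_i^j$ with the isomorphisms $\omega_i^\flat$ (the bundle version of the formula in Lemma \ref{L_wideEiProjective}), and both rest on the fact that reflexivity gives fibrewise bijectivity of $\omega_i^\flat$ while assumptions (i) and (ii) upgrade it to a smooth bundle isomorphism. The only cosmetic difference is the order of construction: the paper first obtains the trivialization of $\widehat{E}^*_{|U}$ by dualizing the trivializations $\widehat{\tau}_i$ of $(\widehat{E}_i)_{|U_i}$ supplied by assumption (i) and then reads the bonding maps through them, whereas you define the bonding maps intrinsically and transport the trivialization of $E_{|U}$ through $\omega^\flat$ at the end; both yield the same projective sequence.
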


\begin{proof}
(1) From our assumptions, for each $i$, we have a sequence of trivializations
$\widehat{\tau}_{i}:(\widehat{E}_{i})_{|U_{i}}\to U_{i}\times
\widehat{\mathbb{E}}_{i}$. Thus we obtain a sequence $\widehat{\tau}_{i}%
^{-1}:U_{i}\times\widehat{\mathbb{E}}_{i}^{\ast}\to(\widehat{E}%
_{i}^{\ast})_{|U_{i}}$ of isomorphisms of trivial bundles. Now, from the proof of  Lemma \ref{L_wideEiProjective},  we have the bonding map $\widehat{\overline{\lambda_{i}^{i}}}:$ $\widehat{\mathbb{E}}%
_{j}^{\ast}$ $\to$ $\widehat{\mathbb{E}}_{i}^{\ast}$ and by restriction to $U_j$ we have a bonding map
$\delta_{i}^{j}$ : $U_{j}\to U_{i}$. So we get a bundle
morphism $\delta_{i}^{j}\times\widehat{\overline{\lambda_{i}^{i}}}$ from $U_{j}\times\widehat{\mathbb{E}}_{j}^{\ast}$
to $U_{i}\times\widehat{\mathbb{E}}_{i}^{\ast}$.
Now  the map
\[
\widehat{\tau}_{i}^{-1}\circ(\delta_i^j\times\widehat{\overline{\lambda_{i}^{i}}})\circ \widehat{\tau}_j
\]
is a bonding map for the projective sequence of trivial bundles $\left(  (\widehat{E}_{i}^{\ast})_{|U_{i}}\right) _{\underleftarrow{N}}$. Therefore the projective limits $\widehat{\tau}=\underleftarrow{\lim}\widehat{\tau}_{i}$ and $\widehat{E}^{\ast}_{|U}=\underleftarrow{\lim}(\widehat{E}_{i}^{\ast})_{|U_{i}}$ are well
defined and $\widehat{\tau}$ is a Fr\'{e}chet isomorphism bundle from
$U\times\widehat{\mathbb{E}^*}$ to $\widehat{E}^*_{|U}$, which ends the proof of (1).\\

(2) At first, from Proposition \ref{P_PropertiesWeakIsometricLinear2Forms}, then $\omega=\underleftarrow{\lim}\omega_i$ is a $2$-form on $E$.
 From our assumption, since for each $i\in \mathbb{N}$ we
can extend $\omega_{i}$ to a bilinear onto $(E_{i})_{|U_{i}}\times(\widehat
{E}_{i})_{|U_{i}}$, this implies that $\omega_{i}^{\flat}$ is an isomorphism
from $(E_{i})_{|U_{i}}$ to $(\widehat{E}_{i}^{\ast})_{|U_{i}}$. Consider the
sequence of bonding maps $\left( \widehat{\overline{\lambda_{i}^{i}}} \right) _{i \in \mathbb{N}}$ for the projective sequence $\left(  (\widehat{E}_{}^{\ast})_{|U_{i}}\right) _{\underleftarrow{i}}$ previously defined. Then we have the following commutative diagram:
\[
 \xymatrix {
U_j\times\mathbb{E}_{j}\ar[r]^{\tau_j^{-1} } \ar[d]_{ \delta_i^j\times {\ell}_i^j}
&({E}_{j})_{|U_j}  \ar[r]^{\omega_j^\flat}\ar[d]^{\ell_i^j}&(\widehat{E}^*_{j})_{|U_j} \ar[r]^{\widehat{\tau}_{j}}\ar[d]^{\widehat{\ell}_i^j} & U_j\times\widehat{\mathbb{E}}^*_{j}\ar[d]^{\delta_i^j\times\widehat{{\ell}}_i^j}\\
U_i \times\mathbb{E}_{i}  \ar[r]_{{\tau}_{i}^{-1}} &
(E_i)_{|U_i}  \ar[r]_{\omega_{i}^{\flat}} &
(\widehat{E}^*_{i})_{|U_i}  \ar[r]_{\widehat{\tau}_{i}} & U_i\times\mathbb{\widehat{\mathbb{E}}}^*_{i}\\
}
\]
It follows that the projective limit $\omega^{\flat}=\underleftarrow{\lim}\omega_{i}^{\flat}$ is well defined and is an isomorphism from $E_{|U}$
to $\widehat{E}_{|U}^{\ast}$.
\end{proof}

\section{Problem of existence of Darboux charts on a strong reduced projective sequence of Banach manifolds}
\label{___ProblemOfExistenceOfDarbouxChartsonAStrongReducedProjectiveSequenceOfBanachManifolds}
\subsection{Conditions of existence of Darboux charts}

Let $ \left( M_{i},\delta_i^j \right) _{j\geq i}$ be a  submersive or decreasing projective sequence of Banach manifolds where $M_{i}$ is modeled on a Banach space $\mathbb{M}_{i}$. We first apply the previous results for $E_i=TM_i$.

\begin{theorem}
\label{T_omega_nCoherentM}
${}$
\begin{enumerate}
\item
Consider a   sequence $\left(\omega_{i}\right) _{i \in \mathbb{N}} $ of compatible  weak symplectic forms $\omega_{i}$
on $M_{i}$. Then, for each $x\in M$, the projective limit $\omega_{x}^{\flat}=\underleftarrow{\lim}(\omega_{i})_{x_{i}}^{\flat}$ is
well defined and is an isomorphism from $T_{x}M$ to $(\widehat{T_{x}M})^{\ast
}$. Moreover $\omega_{x}(u,v)=\omega_{x}^{\flat}(u)(v)$ defines a smooth
weak symplectic form on $M$.
\item
Let $\omega$ be a symplectic form on  a submersive projective limit manifold  $M=\underleftarrow{\lim}M_i$. For all $i\in \mathbb{N}$,
assume that  the canonical projection $\delta_i: M\to M_i$ is  a symplectic   submersion. Then there exists a symplectic form  $\omega_i$ on $M_{i}$  such that $\delta_i^*\omega_i=\omega$ in restriction to $(\ker\delta_i)^\perp$ and the sequence $\left(  \omega_{i}\right) _{i \in \mathbb{N}} $ is a sequence of compatible   weak symplectic forms such that the weak symplectic form  which is the projective limit of $(\omega_i) _{i \in \mathbb{N}}$ on $M$ is exactly $\omega$.
\end{enumerate}
\end{theorem}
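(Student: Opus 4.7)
The plan is to reduce both points to the bundle versions already established in Section 2 applied to the tangent bundles $E_i = TM_i$, combined with Lemma \ref{L_wideEiProjective} for the fiberwise identification of $\omega^\flat$.

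For (1), I first apply Theorem \ref{C_CoherentWeakSymplecticBanachManifold}(1) to obtain $\omega = \underleftarrow{\lim}\omega_i$ as a well-defined smooth weak symplectic form on $M$, with $\omega_x(u,v) = \lim \omega_i(u_i, v_i)$ whenever $u = \underleftarrow{\lim}u_i$ and $v = \underleftarrow{\lim}v_i$. To identify $\omega_x^\flat$, I fix $x = \underleftarrow{\lim}x_i$ and consider the reduced projective sequence of reflexive Banach spaces $(T_{x_i}M_i, T_{x_i}\delta_i^j)_{j\geq i}$ carrying the compatible linear symplectic forms $((\omega_i)_{x_i})$. Lemma \ref{L_wideEiProjective}(2), applied to this sequence, states precisely that $\underleftarrow{\lim}(\omega_i)_{x_i}^\flat$ is well-defined and is an isomorphism $T_xM \to (\widehat{T_xM})^*$. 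Writing $u = \underleftarrow{\lim}u_i$, this limit map sends $u$ to the functional $v \mapsto \omega_x(u,v)$ on $T_xM$, extended by continuity to $\widehat{T_xM}$, which is exactly $\omega_x^\flat$.

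For (2), I apply Theorem \ref{C_CoherentWeakSymplecticBanachManifold}(2) to $TM = \underleftarrow{\lim}TM_i$. The hypothesis that each $T_x\delta_i\colon T_xM \to T_{x_i}M_i$ is a linear symplectic submersion is exactly the pointwise input required by Proposition \ref{P_FamilySymplecticSubmersion}: at every $x_i$ it produces a linear symplectic form $(\omega_i)_{x_i}$ on $T_{x_i}M_i$ with $(T_x\delta_i)^*(\omega_i)_{x_i} = \omega_x$ on $(\ker T_x\delta_i)^\perp$, and it makes the family $((\omega_i)_{x_i})_{i\in\mathbb{N}}$ compatible on the projective tangent tower. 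Smoothness of $x_i \mapsto (\omega_i)_{x_i}$ is obtained as in the proof of Theorem \ref{T_CoherentWeakSymplecticBanachManifold}(2): in a local trivialization coming from the submersive structure, $(\ker T\delta_i)^\perp$ is a smooth subbundle, the restriction of $T\delta_i$ to it is a bundle isomorphism onto $TM_i$, and pushing $\omega$ forward through this smooth isomorphism yields a smooth (and closed, by Corollary \ref{C_CoherentWeakSymplecticBanachManifold}) $2$-form $\omega_i$ on $M_i$. The identity $\omega = \underleftarrow{\lim}\omega_i$ is automatic from the fiberwise conclusion of Proposition \ref{P_FamilySymplecticSubmersion}.

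The statement is mostly a bookkeeping corollary of results already in hand; the only subtlety to watch is that $\omega_x^\flat$ must land in the completion $(\widehat{T_xM})^*$ rather than in $(T_xM)^*$, which is why Lemma \ref{L_wideEiProjective}(2) --- rather than a naive pointwise argument against $T_xM^*$ --- is the correct input for the isomorphism claim in (1).
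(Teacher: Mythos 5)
Your proposal is correct and follows essentially the same route as the paper: both reduce the statement to the Section~2 bundle results applied to $E_i=TM_i$ (Theorem \ref{C_CoherentWeakSymplecticBanachManifold} for closedness and for point (2)) together with the completion machinery for the $\flat$-isomorphism. The only cosmetic difference is that you invoke Lemma \ref{L_wideEiProjective}(2) fiberwise for the isomorphism $T_xM\to(\widehat{T_xM})^*$, whereas the paper cites the bundle-level Proposition \ref{P_DarbouxBambusiProjectiveProperties}, whose proof rests on that same lemma.
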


\begin{proof}
(1) Since $\omega(u,v)=\omega^\flat(u)(v)$, by application  of Proposition \ref{P_DarbouxBambusiProjectiveProperties}  to $E_{i}=TM_{i}$, we obtain that $\omega$ is non degenerate. The proof of that $\omega$ is closed is formally the same as in the proof of Corollary \ref{C_CoherentWeakSymplecticBanachManifold} (1).\\
(2) is a direct consequence of Corollary \ref{C_CoherentWeakSymplecticBanachManifold} (2).\\
\end{proof}
\newline
As in the Banach context, we introduce the notion of Darboux chart:
\begin{definition}
\label{D_DarbouxChartProjectiveLimit}
Let $\omega$ be a weak symplectic form on the direct limit $M=\underleftarrow{\lim}M_{i}$. We say that a chart $(V,\psi)$ around $x_{0}$ is a Darboux chart if $\psi^{\ast}\omega^{0}=\omega$ where $\omega^{0}$ is the constant form on $\psi(U)$ defined by $(\psi^{-1})^{\ast}\omega_{x_{0}}$.
\end{definition}

We have the following necessary and sufficient conditions of existence of Darboux charts on  a submersive  projective sequence of Banach manifolds:

\begin{theorem}
\label{T_equivDarbouxProjective}
Let $\left( M_{i},\delta_i^j\right)  _{j\geq i}$ be a  submersive  or decreasing  projective sequence of Banach manifolds where $M_{i}$ is modeled on a reflexive Banach space $\mathbb{M}_{i}$.
\begin{enumerate}
\item
Consider a  sequence $(\omega_{i}) _{i \in \mathbb{N}}$ of compatible  symplectic forms $\omega_{i}$ on $M_{i}$ and let
$\omega$ be the symplectic form which is the projective limit of $(\omega_{i}) _{i \in \mathbb{N}}$ on
$M=\underrightarrow{\lim}M_{i}$. Assume that the following property is satisfied:

\begin{description}
\item[\textbf{(D)}]
There exists a projective limit chart $(U=\underleftarrow{\lim}U_{i},\phi=\underleftarrow{\lim}\phi_{i})$ around $x^{0}$ such that, for each  $x^{0}_i=\delta_i(x^0)\in M_{i}$, then $(U_{i},\phi_{i})$ is a Darboux chart around $x^{0}_i$ for $\omega_{i}$.
\end{description}
Then $(U,\phi)$ is a Darboux chart around $x^{0}$ for $\omega$.

\item
Let $\omega$ be a weak symplectic form on a submersive projective limit
$M=\underleftarrow{\lim}M_{i}$ such that $\delta_i:M\to M_i$ is a symplectic   submersion. Assume that there exists a Darboux chart $(V,\phi)$ around $x^{0}$ in $M$.\\
If $\omega_{i}$ is the symplectic form on $M_{i}$ induced by $\omega$, then there exists a  projective limit chart
$(U=\underleftarrow{\lim}U_{i},\phi=\underrightarrow{\lim}\phi_{i})$ around
$x^{0}$ such that the property (D) is satisfied.\newline
\end{enumerate}
\end{theorem}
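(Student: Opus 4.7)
My plan for part (1) is to push the Darboux identity through the projective limit. Since $(\omega_i)_{i\in\mathbb{N}}$ is a compatible sequence and $(U,\phi)=\underleftarrow{\lim}(U_i,\phi_i)$ by hypothesis, Theorem \ref{C_CoherentWeakSymplecticBanachManifold} yields $\omega=\underleftarrow{\lim}\omega_i$, and in particular $\omega_{x^0}=\underleftarrow{\lim}(\omega_i)_{x_i^0}$ as a bilinear form on $T_{x^0}M=\underleftarrow{\lim}T_{x_i^0}M_i$. Since $T_{x^0}\phi=\underleftarrow{\lim}T_{x_i^0}\phi_i$ is an isomorphism onto $\mathbb{M}=\underleftarrow{\lim}\mathbb{M}_i$, a direct computation shows that the constant form $\omega^0=(\phi^{-1})^{\ast}\omega_{x^0}$ on $\phi(U)$ satisfies $\omega^0(u,v)=\underleftarrow{\lim}\omega_i^0(u_i,v_i)$ whenever $u=\underleftarrow{\lim}u_i$ and $v=\underleftarrow{\lim}v_i$. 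Using that pullback commutes with projective limits of charts, together with the Darboux hypothesis $\phi_i^{\ast}\omega_i^0=\omega_i$, I obtain
\[
\phi^{\ast}\omega^0=\underleftarrow{\lim}\phi_i^{\ast}\omega_i^0=\underleftarrow{\lim}\omega_i=\omega,
\]
so $(U,\phi)$ is a Darboux chart for $\omega$ around $x^0$.

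For part (2), Corollary \ref{C_CharacterizationOfWeakSymplectic2FormOnProjectiveLimitOfSubmersiveSequence} first produces a compatible sequence $(\omega_i)_{i\in\mathbb{N}}$ with $\omega=\underleftarrow{\lim}\omega_i$, so each $\omega_i$ is the induced weak symplectic form on $M_i$. My strategy is to start from any projective limit chart $(W=\underleftarrow{\lim}W_i,\psi=\underleftarrow{\lim}\psi_i)$ around $x^0$ contained in the given Darboux chart $V$, and then use the transition $F=\phi\circ\psi^{-1}\colon\psi(W)\to\phi(W)\subset\mathbb{M}$, which satisfies $F^{\ast}\omega^0=(\psi^{-1})^{\ast}\omega$, to manufacture Darboux charts at each level. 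Exploiting the product decomposition $\mathbb{M}\cong\prod_l\mathbb{M}'_l$ established in the proof of Corollary \ref{C_CoherentWeakSymplecticBanachManifold} (in which the constant form $\omega^0$ splits as $\sum_l\overline{\omega^0_l}$ and $\mathbb{M}_i\cong\prod_{l\leq i}\mathbb{M}'_l$), I would factor $F$ level by level to obtain diffeomorphisms $F_i\colon\psi_i(W_i)\to F_i(\psi_i(W_i))\subset\mathbb{M}_i$ with $F_i^{\ast}\omega_i^0=(\psi_i^{-1})^{\ast}\omega_i$. Setting $\phi_i=F_i\circ\psi_i$ yields Darboux charts for each $\omega_i$, and the compatibility of the $F_i$ with the bonding maps $\overline{\delta_i^{i+1}}$ guarantees that $\phi=\underleftarrow{\lim}\phi_i$ is the desired projective limit chart.

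The main obstacle is verifying that the transition $F$ genuinely descends to a well-defined $F_i$ at each level, i.e.\ that $\overline{\delta_i}\circ F$ is constant along the fibres of $\psi(W)\to\psi_i(W_i)$. This is where the symplectic submersion hypothesis on $\delta_i$ is essential: the orthogonal decomposition $\mathbb{M}=\ker\overline{\delta_i}\oplus(\ker\overline{\delta_i})^{\perp_{\omega^0}}$ is determined intrinsically by the constant form $\omega^0$, and since $F^{\ast}\omega^0=(\psi^{-1})^{\ast}\omega=\underleftarrow{\lim}(\psi_i^{-1})^{\ast}\omega_i$ respects this decomposition level-wise, $F$ descends after shrinking $W$ to a suitable product neighbourhood. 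Once the $F_i$ are constructed, the verification that $\phi_i=F_i\circ\psi_i$ is a diffeomorphism onto an open neighbourhood of $x_i^0$ in $M_i$, and that the family $(\phi_i)$ fits into a projective limit chart realising property \textbf{(D)}, is then routine.
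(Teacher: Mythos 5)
Your argument for part (1) is correct and is essentially the paper's own: in the projective limit chart the form $\omega$ is the projective limit of the constant forms $\omega_i^0=(\phi_i^{-1})^{\ast}(\omega_i)_{x_i^0}$, hence is itself constant, so $(U,\phi)$ is a Darboux chart. The only cosmetic difference is that the paper phrases this as "$\omega_x(u,v)=\underleftarrow{\lim}(\omega_i)_{x_i}(u_i,v_i)$ is independent of $x$" rather than as the identity $\phi^{\ast}\omega^0=\underleftarrow{\lim}\phi_i^{\ast}\omega_i^0$.

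For part (2) you diverge from the paper, and your route has a genuine gap. The paper simply reads the hypothesis as saying that the Darboux chart is itself a projective limit chart $(U=\underleftarrow{\lim}U_i,\phi=\underleftarrow{\lim}\phi_i)$; then, since $\delta_i$ is a symplectic submersion, $\omega=\delta_i^{\ast}\omega_i$ on $\mathbb{H}_i=(\ker T\delta_i)^{\perp}$, and after identifying $\mathbb{H}_i$ with $\mathbb{M}_i$ via $\overline{\delta_i}$ the form $\omega_i$ is just the restriction of the constant form $\omega$ to $U_i\times\mathbb{M}_i$, hence constant; each $(U_i,\phi_i)$ is therefore already a Darboux chart and \textbf{(D)} holds. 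You instead allow an arbitrary Darboux chart $(V,\phi)$, introduce an auxiliary projective limit chart $(W,\psi)$, and try to push the transition $F=\phi\circ\psi^{-1}$ down to maps $F_i$ at each level. The step you flag as "the main obstacle" is exactly where the argument fails: the fact that $F^{\ast}\omega^0=(\psi^{-1})^{\ast}\omega$ and that $\omega^0$ splits as $\sum_l\overline{\omega^0_l}$ on $\prod_l\mathbb{M}'_l$ does \emph{not} imply that $F$ maps fibres of $\overline{\delta_i}$ to fibres of $\overline{\delta_i}$. Already a \emph{linear} symplectomorphism of a product of symplectic spaces $(\mathbb{M}'_1\times\mathbb{M}'_2,\overline{\omega^0_1}\oplus\overline{\omega^0_2})$ can mix the two factors (e.g.\ swap them when the factors are symplectomorphic), so preserving the product form in no way forces $\overline{\delta_i}\circ F$ to be constant along the fibres of $\psi(W)\to\psi_i(W_i)$, and no amount of shrinking $W$ repairs this. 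Consequently the maps $F_i$, and hence the charts $\phi_i=F_i\circ\psi_i$, are not defined by your construction.

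If you want to prove the statement for a Darboux chart that is not assumed to be a projective limit chart, you would need an additional argument producing a limit chart in which $\omega$ is still constant (or you should state, as the paper's proof implicitly does, that the hypothesis is that the Darboux chart is compatible with the projective limit structure). As written, your part (2) does not close.
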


\begin{proof}
(1) Assume that the assumption \textbf{(D)} is true and that $ \left( M_{i} \right) _{i \in \mathbb{N}}$ is a reduced projective sequence of Banach manifolds. We fix some $x^0\in M$. We consider a projective limit chart $(U=\underleftarrow{\lim}U_{i},\phi
=\underleftarrow{\lim}\phi_{i})$ around $x^{0}$ such that, if $x^{0}_i=\delta_i(x^0)\in U_{i}$, then $(U_{i},\phi_{i})$ is a Darboux chart around $x^{0}_i$ for $\omega_{i}$. Now we have the following commutative diagram:
\begin{eqnarray}
\label{eq_diagram3DDarboux}
\xymatrix {
    \pi^{-1}(U) \ar[rr]^{T\phi} \ar[dd] \ar[dr]^{T\delta_i}&& \phi(U)\times\mathbb{M}\ar[dr]^{\bar{\delta}_i\times{\bar{\delta}}_i} \ar[dd] |!{[dl];[dr]}\hole \\
    & \pi_i^{-1}(U_i) \ar[rr]^{T\phi_i} \ar[dd] && \phi_i(U_i)\times \mathbb{M}_i \ar[dd] \\
    U \ar[rr]^\phi |!{[ur];[dr]}\hole \ar[dr]^{{\delta}_i}&& \phi(U) \ar[rd]^{{\bar{\delta}}_i} \\
    & U_i \ar[rr]^{\phi_i} && \phi_i(U_i) \\
  }
\end{eqnarray}
According to this diagram and modulo the diffeomorphisms $\phi$ and $\phi_i$,  we may assume that
\begin{itemize}
\item[--]
$U$ is an open neighbourhood of $x^0\equiv 0 \in\mathbb{M}$, and   $U_i$ is a neighbourhood of  $0\in \mathbb{M}_i$;
\item[--]
$\omega$ is a smooth  $2$-form on $U$ and $\omega_i$ is a constant $2$-form on $U_i$.
\end{itemize}

Now if $x=\underleftarrow{\lim}x_i\in U$, $u=\underleftarrow{\lim}u_i$ and $v=\underleftarrow{\lim}v_i$, since $\omega_i$  is constant on $U_i$ it follows that $(\omega_i)_{x_i}(u_i,v_i)$ is independent of $x_i\in U_i$; so the value
\[
\omega_x (u,v)=\underleftarrow{\lim}(\omega_i)_{x_i}(u_i,v_i)
\]
is independent of the point $x$, which ends the proof of (1).\\

(2) Let $\omega$ be a weak symplectic form on $M=\underleftarrow{\lim}M_{i}$ such that, for all $i\in \mathbb{N}$, $\delta_i:M\to M_i$ is a symplectic  submersion.
Assume that we have a Darboux chart $(U=\underleftarrow{\lim}U_{i},\phi
=\underrightarrow{\lim}\phi_{i})$ around $x^{0}$ for $\omega$.
Fix some $i\in \mathbb{N}$.   In the  context of  Diagram(\ref{eq_diagram3DDarboux}), we have $\mathbb{M}\equiv \mathbb{K}_i\times \mathbb{H}_i$ where $\mathbb{K}_i$ is the kernel of $T_0\delta_i$ and $\mathbb{H}_i$ is the orthogonal of $\mathbb{K}_i$ in $\mathbb{M}_i\equiv T_0M$ (cf. Diagram(\ref{eq_diagram3Di}) with, for all $i\in \mathbb{N}$, $E_i=TM_i$). Thus again, modulo the diffeomorphisms $\phi$ and $\phi_i$, we may assume that
\begin{itemize}
\item[--]
$x^0\equiv 0\in U\subset \mathbb{K}_i\times \mathbb{H}_i$ , $\;x_i^0\equiv 0\in U_i\subset  \mathbb{M}_i$;
\item[--]
$\omega$ is a constant $2$-form on $U$ and  $\omega_i$ is a smooth $2$-form on $U_i$.
\end{itemize}

Recall that the restriction of  $\overline{\delta_i}$  to $\mathbb{H}_i$  is an  isomorphism onto $\mathbb{M}_i$, thus we may also assume that $\mathbb{H}_i=\mathbb{M}_i$.
In this way,  we have   $\delta_i^*\omega_i=\omega$ in restriction to $\mathbb{H}_i= \mathbb{M}_i$. Thus, with our identification, $\omega_i$ is nothing but the restriction of $\omega$ to $U_i\times \mathbb{M}_i$ and so $\omega_i$ is a constant $2$-form on $U_i$ whose value is fixed by the restriction of $\omega$ to $\mathbb{M}_i$.
\end{proof}

\subsection{Problem of existence of Darboux chart in general}
\label{__ProblemOfExistenceOfDarbouxChartInGeneral}

In this subsection, we will explain why, even in the context of a  submersive projective sequence of  weak symplectic Banach manifolds which satisfies the assumption of Theorem \ref{T_localDarboux} , in general, there does not exist any Darboux chart for the induced symplectic form on the projective limit.\\

Let $  \left( M_{i},\delta_{i}^{j}) \right) _{j\geq i}$ be a  projective sequence of Banach manifolds where $M_{i}$ is modeled on a \emph{reflexive} Banach space $\mathbb{M}_{i}$.
Consider a  sequence $\left(  \omega_{i}\right)  _{i\in\mathbb{N}}$ of compatible  weak symplectic forms on $M_{i}$. Since $\mathbb{M}_{i}$ is reflexive, we denote by $\widehat{T_{x_{i}}M_{i}}$ the Banach space which is the completion of $T_{x_{n}}M_{i}$ provided with the norm $||\;||_{(\omega_{i})_{x_{i}}}$.
Then $(\omega_{i})_{x_{i}}$ can be extended to a continuous bilinear map
$(\hat{\omega}_{i})_{x_{i}}$ on $T_{x_{i}}M_{i}\times\widehat{T_{x_{i}}M_{i}}$
and $(\omega_{i})_{x_{i}}^{\flat}$ becomes an isomorphism from $T_{x_{i}}%
M_{i}$ to $(\widehat{T_{x_{i}}M_{i}})^{\ast}$. We set
\[
\widehat{TM_{i}}=\bigcup_{x_i\in M_i}\widehat{T_{x_{i}}M_{i}},\;\;\;\;\;\widehat{TM_{i}}^*=\bigcup_{x_i\in M_i}\widehat{T_{x_{i}}M_{i}}^*.
\]
Then by application of Proposition \ref{P_DarbouxBambusiProjectiveProperties}, we have:
\begin{proposition}
\label{P_DarbouxProjective}
Let $\left(  M_{i},\delta_{i}^{j}\right) _{j\geq i}$ be a reduced projective sequence of Banach manifolds whose
model is a reflexive Banach space $\mathbb{M}_{i}$. Consider a  sequence $\left(  \omega_{i}\right)$ of compatible   weak symplectic forms $\omega_{i}$ on $M_{i}$. Assume that we have the following assumptions
\footnote{These assumptions correspond to the Bambusi-Darboux assumptions in Definition \ref{D_DarbouxBambusiProjective}} at $x^0\in M=\underleftarrow{\lim}M_i$:
\begin{enumerate}
\item[(i)]
there exists a limit chart $(U=\underleftarrow{\lim}U_{i},\phi=\underleftarrow{\lim}\phi_{i})$ around $x^{0}$ such that $(\widehat{TM}_{i})_{|U_{i}}$ is a trivial Banach bundle.

\item[(ii)]
$\omega_{i}$ can be extended to a smooth field of continuous bilinear forms on $(TM_{i})_{|U_{i}}\times(\widehat{TM}_{i})_{|U_{i}}$ for all $i\in \mathbb{N}$.
\end{enumerate}

Then $\widehat{T^{\ast}M}_{|U}$ is a trivial bundle. If $\omega$ is the
symplectic form defined by the sequence $\left(  \omega_{i}\right)
_{i\in\mathbb{N}}$, then the morphism
\[
\omega^{\flat}:TM\to T^{\ast}M
\]
induces an isomorphism from $TM_{|U}$ to $\widehat{T^{\ast}M}_{|U}$.
\end{proposition}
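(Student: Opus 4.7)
The plan is to specialize Proposition~\ref{P_DarbouxBambusiProjectiveProperties} to the tangent bundles $E_i = TM_i$. All the data needed by that proposition is already in place, so the work consists in matching up definitions and then transcribing the conclusion.

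First I would verify the hypotheses. The reducedness of $(M_i,\delta_i^j)_{j\geq i}$ entails that the associated sequence of tangent bundles $(TM_i, T\delta_i^j)_{\underleftarrow{i}}$ is a projective sequence of Banach bundles whose typical fibre sequence $(\mathbb{M}_i, \overline{T\delta_i^j})$ is a reduced projective sequence of reflexive Banach spaces, which is precisely condition \textbf{(RPSBS)}. By hypothesis $(\omega_i)_{i\in \mathbb{N}}$ is a sequence of compatible weak symplectic forms on the manifolds $M_i$, which translates directly, through Definition~\ref{D_IsometricWeakSymplecticOnProjectiveBanachBundles}, into compatibility of the family $(\omega_i)$ regarded as weak symplectic forms on the Banach bundles $TM_i$. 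Finally, hypotheses (i) and (ii) of the present statement are literally hypotheses (i) and (ii) of Definition~\ref{D_DarbouxBambusiProjective} with $E_i = TM_i$, so the Bambusi--Darboux assumption at $x^0$ is in force.

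With all assumptions checked, I would then invoke Proposition~\ref{P_DarbouxBambusiProjectiveProperties}. Point~(1) yields that
\[
\widehat{T^{\ast}M}_{|U} = \underleftarrow{\lim}\; \widehat{TM_i^{\ast}}_{|U_i}
\]
is a trivial Fr\'echet bundle over $U$, with typical fibre $\underleftarrow{\lim}\, \widehat{\mathbb{M}}_i^{\ast}$, which is the first conclusion. Point~(2) supplies the projective limit morphism $\omega^{\flat} = \underleftarrow{\lim}\, \omega_i^{\flat}$ as an isomorphism of Fr\'echet bundles from $TM_{|U}$ onto $\widehat{T^{\ast}M}_{|U}$. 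Since by Theorem~\ref{C_CoherentWeakSymplecticBanachManifold} the form $\omega$ is itself the projective limit of the $\omega_i$, this limit morphism is exactly the musical map $\omega^{\flat}$ of the statement, which is the second conclusion.

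There is essentially no obstruction beyond these identifications. The mildly delicate point, already absorbed into the proof of Proposition~\ref{P_DarbouxBambusiProjectiveProperties} via Lemma~\ref{L_wideEiProjective}, is that the bonding maps on the completions $\widehat{TM_i^{\ast}}$ are well defined and compatible with the trivializations produced by (i); they are obtained from $\overline{T\delta_i^j}$ by conjugation through the $\omega_i^{\flat}$, and compatibility of the $\omega_i$ is exactly what makes the resulting diagrams commute so that the projective limit exists and remains a trivial Fr\'echet bundle.
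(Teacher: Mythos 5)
Your proposal is correct and follows exactly the route the paper takes: Proposition \ref{P_DarbouxProjective} is presented there as an immediate application of Proposition \ref{P_DarbouxBambusiProjectiveProperties} to the case $E_i = TM_i$, and your verification of the hypotheses (\textbf{RPSBS} from reducedness, compatibility of the $\omega_i$ on the tangent bundles, and the Bambusi--Darboux assumption from (i) and (ii)) is the only content that needed to be spelled out.
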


Note that the context of Proposition \ref{P_DarbouxProjective} covers the
particular framework of projective limit of strong symplectic Banach manifolds
$\left( M_{i},\omega_{i} \right) _{i \in \mathbb{N}}$.\\

\textit{We will expose which arguments are needed to prove  a Darboux theorem in the context of reduced  projective sequence of Banach manifolds under the assumptions of Proposition \ref{P_DarbouxProjective}. In fact, we point out the problems that arise in establishing the existence
of a Darboux chart by Moser's method}.\\ 

\underline{\bf Case 1}. Assume that $M=\underleftarrow{\lim}M_i$ is a  reduced projective limit.\\
 Fix some point $a=\underleftarrow{\lim}a_{n}\in M$. In the context on
Proposition \ref{P_DarbouxProjective}, on the projective  limit chart $(U,\phi)$ around $a$, we can
replace $U$ by $\phi(U)$, $\omega$ by $\phi^{\ast}\omega$ on the open subset  $\phi(U)$ of the Fr\'echet
 space $\mathbb{M}$. Thus, if $\omega^{0}$ is the constant form
on $U$ defined by $\omega_{a}$, we consider the $1$-parameter family
\[
\omega^{t}=\omega^{0}+t\overline{\omega},\text{ with }\overline{\omega}%
=\omega-\omega^{0}.
\]
Since $\omega^{t}$ is closed and $\mathbb{M}$ is a Fr\'echet space, by
\cite{KrMi} Lemma 33.20, there exists a neighbourhood $V\subset U$ of $a$ and
a $1$-form $\alpha$ on $V$ such that $d\alpha=\overline{\omega}$ which is
given by
\[
\alpha_{x}:=\int_{0}^{1}s.\overline{\omega}_{sx}(x,\;)ds.
\]
Now, for all $0\leq t\leq1$, $\omega_{x_{0}}^{t}$ is an isomorphism from
$T_{a}M\equiv\mathbb{M}$ onto $\widehat{T_{a}M}\equiv\widehat{\mathbb{M}%
}^{\ast}$. In the Banach context, using the fact that the set of invertible
operators is open in the set of operators, after restricting $V$, we may
assume that $(\omega^{t})^{\flat}$ is a field of isomorphisms from
$\mathbb{M}$ to $\widehat{\mathbb{M}}^{\ast}$.
 \textit{Unfortunately, this result is not true in the Fr\'echet setting}.
Therefore, the classical proof does not work in
this way in general. \\

\underline{\bf Case 2}. Assume that  $M$ is a submersive projective limit.\\
According to Theorem \ref{T_omega_nCoherentM}, assume that the canonical projection $\delta_i:M\to M_i$ is a symplectic   submersion, for all $i\in \mathbb{M}$. Then  $\omega$ induces a symplectic form $\omega_i$ on
$M_{i}$.  Therefore, for each $i$, let $\alpha_{i}$ be the $1$-form
 induced by $\alpha$ on $\phi_{i}(U_{i}\cap V)$. Then we have $\omega_{i}=d\alpha_{i}$ and also
\[
(\alpha_{i})_{x_{i}}=\int_{0}^{1}s.(\overline{\omega}_{i})_{sx_{i}}%
(x_{i},\;)ds
\]
where $\bar{\omega}_{i}=\omega_{i}-\omega_{i}^{0}$ is associated to the
$1$-parameter family $\omega_{i}^{t}=\omega^{i}+t\bar{\omega}_{i}$. We are
exactly in the context of the proof of Theorem \ref{T_MoserLemmaOnAReflexifBanachSpace} and so the
local flow $\operatorname{Fl}^{X_i}_{t}$ of $X_{i}^{t}=((\omega_{i}^{t})^{\flat})^{-1}(\alpha_{i})$ is a local diffeomorphism from a neighbourhood $W_{i}$ of $a_i$ in
$V_{i}$ and, in this way, we build a Darboux chart around $a_{i}$ in $M_{i}$.
Therefore, after restricting each $W_{i}$, if necessary, assume that:
\begin{description}
\item[\textbf{(PLDC)}](projective limit Darboux chart)
We have a projective sequence of such open sets $\left( W_{i} \right) _{n\in \mathbb{N}}$, then on $W=\underleftarrow
{\lim}W_{i}$, the family of local diffeomorphisms $F^{t}=\underleftarrow
{\lim}F_{i}^{t}$ is defined on $W$.
\end{description}
Recall that $\omega^{\flat}=\underleftarrow{\lim}\omega_{i}%
^{\flat}$ and $\omega^{\flat}$ is an isomorphism. Thus according to the
previous notations, we have a time dependent vector field
\[
X^{t}=((\omega^{t})^{\flat})^{-1}(\alpha)
\]
and again, we have $L_{X^{t}}\omega^{t}=0$. Of course, if the \textbf{(PLDC)}
assumption on $\left( W_{n} \right) _{n\in \mathbb{N}}$ is true, then $X^{t}=\underleftarrow{\lim}X_{i}^{t}$.
So we obtain a Darboux chart as in the Banach context. Note that, in this
case, we are in the context of Theorem \ref{T_equivDarbouxProjective} .

\begin{remark}
\label{R_pbDarbouxProjectiveChart}
In fact, under  the assumption \emph{\textbf{(PLDC)}}, the flow $\operatorname{Fl}_{t}$ is the local flow (at
time $t\in [0,1]$) of $X^{t}=\underrightarrow{\lim}X_{i}^{t}$ where $X_{i}^{t}=((\omega_{n}^{t})^{\flat})^{-1}
(\alpha_{i})$ (with the previous notations).
Unfortunately, according to Remark \ref{R_ProjectiveLimitTopologyFrechettoplogy}, outside particularity special cases, the "Darboux chart" assumption is not true in general,  since, in general,
\[
\displaystyle\bigcap_{j\geq i_0}\delta_i^j(W_j)
\]
is not an open neighbourhood of $x^0_{i_0}$.

\end{remark}

\underline{Consider  again the context of case 2.}

Fix some norm $||\;||_i$  on $T_{x^0_i}M_i$ for all $i\in \mathbb{N}$.  Assume there exists $K>0$ such  that
$||((\omega_{n}^{t})_{x^0_i}^{\flat})||_i^{\operatorname{op}}\leq K$, for all $t\in [0,1]$ and for all $i\in \mathbb{N}$. Then, according to Theorem \ref{T_UniformlyBoundedProperties}, there exists an open  neighbourhood $W$  of $a\in M$ such that $(\omega_{n}^{t})^{\flat}$ is uniformly bounded on $W$ and so  the same is true for $((\omega_{n}^{t})^{\flat})^{-1}$. It follows that    the time dependent vector field  $X^t$  defined in Remark \ref{R_pbDarbouxProjectiveChart} satisfies the assumption of Theorem \ref{T_SolutionODEOnFrechetSpaces}
and so the assumption \textbf{(PLDC)} will be satisfied.\\
Conversely  if $\omega$ is a symplectic form on $M$ such that $\delta_i :M\to M_i$ is a symplectic  submersion, then we can apply the previous arguments. Thus we have:
 \begin{theorem}
\label{T_Uniformly BoundedSymplectic}
 	Let $M=\underleftarrow{\lim}M_i$  be a submersive projective limit of reflexive Banach manifolds $M_i$ and $(\omega_i)_{i \in \mathbb{N}}$ a compatible  sequence of symplectic forms (resp. $\omega$ a symplectic form on $M$ such that $\delta_i:M\to M_i$ is a symplectic  submersion).\\
If the assumptions of Proposition \ref{P_DarbouxProjective} are satisfied around some point $a=\underleftarrow{\lim}a_i \in M$ and if
  \[
  \exists K>0:\; \forall t \in [0,1],\, \forall i \in \mathbb{N},\;
  ||((\omega_{n}^{t})_{a_i}^{\flat})||_i^{\operatorname{op}}\leq K
  \]
then there exists a Darboux chart around $a$.
 \end{theorem}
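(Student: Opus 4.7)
The plan is to run Moser's method simultaneously at every Banach level $i$, assemble the resulting time-dependent vector fields $X_i^t$ into a projective limit $X^t = \underleftarrow{\lim} X_i^t$ on $M$, and show that the uniform bound hypothesis is exactly what allows one to integrate $X^t$ on a full projective limit neighbourhood of $a$, thereby settling the obstruction pointed out in Remark \ref{R_pbDarbouxProjectiveChart}.

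First, using Proposition \ref{P_DarbouxProjective} (together with Theorem \ref{T_omega_nCoherentM} in the submersion formulation to reduce to a compatible sequence $(\omega_i)$), I would work in a projective limit chart $(U = \underleftarrow{\lim} U_i,\ \phi = \underleftarrow{\lim} \phi_i)$ around $a$ and identify each $U_i$ with an open set in $\mathbb{M}_i$ with $a_i \equiv 0$. On $U_i$ set $\omega_i^t = \omega_i^0 + t\bar{\omega}_i$ with $\bar{\omega}_i = \omega_i - \omega_i^0$, and build a primitive of $\bar{\omega}_i$ via the Poincar\'e formula $(\alpha_i)_{x_i} = \int_0^1 s\,(\bar{\omega}_i)_{sx_i}(x_i,\cdot)\, ds$. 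These primitives are compatible under the bonding maps, so $\alpha = \underleftarrow{\lim}\alpha_i$ is a smooth $1$-form on $U$ with $d\alpha = \bar{\omega}$. At each level Theorem \ref{T_MoserLemmaOnAReflexifBanachSpace} then produces a Moser flow $\operatorname{Fl}_{i,t}$ near $a_i$ generated by $X_i^t = -((\omega_i^t)^\flat)^{-1}(\alpha_i)$ satisfying $\operatorname{Fl}_{i,t}^{\ast}\omega_i^t = \omega_i^0$.

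The heart of the argument is to upgrade this to a flow on $M$ itself. The uniform bound $\|(\omega_i^t)_{a_i}^\flat\|_i^{\operatorname{op}} \leq K$ for all $t \in [0,1]$ and all $i$, combined with Theorem \ref{T_UniformlyBoundedProperties}, furnishes a projective limit neighbourhood $W$ of $a$ on which the family $(\omega^t)^\flat$ is uniformly bounded at every level. Since $(\omega_i^0)_{a_i}^\flat = (\omega_i)_{a_i}^\flat$ is an isomorphism onto $\widehat{T_{a_i}M_i}^{\ast}$, a Neumann-series perturbation (shrinking $W$ if necessary) shows that $((\omega^t)^\flat)^{-1}$ is also uniformly bounded at every level on $W$. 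Consequently $X^t = -((\omega^t)^\flat)^{-1}(\alpha) = \underleftarrow{\lim} X_i^t$ is well defined on $W$ and each $X_i^t$ is bounded in $||\;||_i$ uniformly in $t$. The main obstacle, and the reason the naive Moser argument fails in Case 1 of the preceding discussion, is precisely this: without the hypothesis, the Banach level domains $W_i$ on which the flows $\operatorname{Fl}_{i,t}$ exist can shrink faster than the projective limit can absorb, and \textbf{(PLDC)} then cannot be verified.

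Once the uniform bound is in hand, Theorem \ref{T_SolutionODEOnFrechetSpaces} applies in the bounded Fr\'echet framework on $W$ and produces a local flow $\operatorname{Fl}_t$ of $X^t$ on a projective limit neighbourhood $V_0 \subset W$ of $a$, defined for all $t \in [0,1]$. By uniqueness of ODE solutions at each Banach level, $\delta_i \circ \operatorname{Fl}_t = \operatorname{Fl}_{i,t} \circ \delta_i$, so the assumption \textbf{(PLDC)} is now \emph{established} rather than postulated. The standard Cartan computation
\[
\frac{d}{dt}\operatorname{Fl}_t^{\ast}\omega^t = \operatorname{Fl}_t^{\ast}\!\left(L_{X^t}\omega^t + \dot{\omega}^t\right) = \operatorname{Fl}_t^{\ast}(-d\alpha + \bar{\omega}) = 0
\]
(which holds level by level and therefore on the limit) then yields $\operatorname{Fl}_1^{\ast}\omega = \omega^0$ on $V_0$. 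Composing $\operatorname{Fl}_1^{-1}$ with $\phi$ on a suitable open set $V \subset M$ furnishes the desired Darboux chart around $a$.
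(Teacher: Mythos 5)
Your proposal is correct and follows essentially the same route as the paper: level-wise Moser primitives via the Poincar\'e formula, the uniform bound at $a$ combined with Theorem \ref{T_UniformlyBoundedProperties} to obtain uniform boundedness of $(\omega^t)^\flat$ and its inverse on a projective limit neighbourhood, and then Theorem \ref{T_SolutionODEOnFrechetSpaces} to integrate $X^t=\underleftarrow{\lim}X_i^t$ and thereby validate the assumption \textbf{(PLDC)} before concluding with the standard Cartan computation. Your explicit mention of the Neumann-series perturbation only makes precise the step the paper attributes to the openness of $\mathcal{GH}_b$ in $\mathcal{H}_b$; otherwise the arguments coincide.
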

 \begin{remark}
 \label{R_thKumarThEftekharinasab}
  A theorem of existence of a Darboux chart in the context of projective limit of Banach manifolds was firslty proved by Kumar (\cite{Ku2}, Theorem 5.1). The sufficient condition required in this Theorem  for  the existence of such Darboux Chart  also implies the validity of "Darboux chart assumption". More precisely, under the previous notations, it is  assumed  that
\[
 \exists K>0:\; \forall t \in [0,1],\, \forall i \in \mathbb{N},\;
  ||((\omega_{n}^{t})^{\flat})^{-1}(\alpha_i)||_i\leq K.
\]

 Note that the context of Kumar's Theorem is the same as in Theorem \ref{T_Uniformly BoundedSymplectic}, except that the  previous last condition is stronger than the last condition  of Theorem \ref{T_Uniformly BoundedSymplectic}.\\
 The big problem of such results is that, without very particular case (cf \cite{Ku3}), to our knowledge, there exists no general situation in which such a result can be applied.
  \end{remark}

\subsection{Examples and contre-example about the existence of a projective limit of Darboux charts}
\label{_ExamplesContreExamplePeojectiveLimitDabouxChart}

\begin{example}
\label{Ex_ProjectiveDarbouxChartOnLpSM}\normalfont
According to \cite{Pe} section  4,  the set $\mathsf{L}_k^p(\mathbb{S}^1,M)$  of Sobolev loops of class $L_p^k$ has a Banach structrue manifold and if  where $(M,\omega)$ is a symplectic manifold, we can provide  $\mathsf{L}_k^p(\mathbb{S}^1,M)$ with a weak symplectic form $\Omega_k$ and around any $\gamma\in \mathsf{L}_k^p(\mathbb{S}^1,M)$, we have a Darboux chart (cf. \cite{Pe} Theorem 32). Moreover,  $\mathsf{L}_k^2(\mathbb{S}^1,M)$  is a Hilbert space and $\Omega_k$ is a strong symplectic form. If we denote by  $\mathsf{L}^\infty(\mathbb{S}^1,M)$ the set of smooth loops in $M$, we have  $\mathsf{L}^\infty(\mathbb{S}^1,M)=\underleftarrow{\lim}\mathsf{L}_k^2(\mathbb{S}^1,M)$  and this space is a $\mathsf{ILH}$-manifold. It is easy to see that the sequence of forms $(\Omega_k)_{k\in \mathbb{N}}$ are compatible  and since the projective sequence $\left(\mathsf{L}_k^2(\mathbb{S}^1,M)\right)_{k\in \mathbb{N}}$ is reduced,  we get a weak symplectic form $\Omega=\underleftarrow{\lim}\Omega_k$ on  $\mathsf{L}^\infty(\mathbb{S}^1,M)$. In fact, $\Omega$ can be defined directly in the same way as $\Omega_k$ on each  $\mathsf{L}_k^p(\mathbb{S}^1,M)$. \\
When $M=\mathbb{R}^{2m}$, consider  the canonical (linear) Darboux form $\omega$ on $\mathbb{R}^{2m}$. Then we have a global Darboux chart for $\Omega$ on $\mathsf{L}^\infty(\mathbb{S}^1,\mathbb{R}^{2m})$ (cf. \cite{Ku3}). Of course, since we also have a global Darboux chart on each $\mathsf{L}_k^2(\mathbb{S}^1,\mathbb{R}^{2m})$, we then get an example of projective limit of Darboux charts.
\end{example}

\begin{example}
\label{Ex_existDarbouxProjectiveLimit}\normalfont
Let  $\left( \mathbb{M}_i\right)  _{i\in \mathbb{N}}$ be a sequence of Banach spaces.  Consider the submersive projective sequence of Banach spaces $\left( \overline{\mathbb{M}}_i=\displaystyle\prod_{k=1}^i \mathbb{M}_k\right)  _{i\in \mathbb{N}^\ast}$ of Banach spaces where $\bar{\delta}_i^j:  \overline{\mathbb{M}}_j\to \overline{\mathbb{M}}_i$ is the canonical projection. Then the projective limit  $\overline{\mathbb{M}}$ is the product $\displaystyle\prod_{k=1}^\infty \mathbb{M}_k$. On $\overline{\mathbb{M}}$ the projective limit topology   is the product topology and it is also the topology of Fr\'echet manifold.\\
Now,  assume  that on each $\mathbb{M}_k$ we have  a weak symplectic form $\omega_k$ such that, for some $\bar{x}= \underleftarrow{\lim}(x_1,\dots,x_n)\in \overline{\mathbb{M}}$, each symplectic form $\omega_k$ satisfies  the assumptions (i) and (ii) of Theorem \ref{T_MoserLemmaOnAReflexifBanachSpace} at $x_k$ and  for all $k\in \mathbb{N}^\ast$. Then from  this Theorem, around the point $x_k\in \mathbb{M}_k$, we have a Darboux chart $(V_k, F_k)$. \\
 For any $\bar{x}_n:=(x_1,\dots,x_n)\in \overline{\mathbb{M}}_n$ and $\bar{u}_n:=(u_1,\dots,u_n)$, $\bar{v}_n=(v_1,\dots,v_n)$ in $T_{\bar{x}_n}\overline{\mathbb{M}}_n$ we define the $2$ form
\[
\bar{\omega}_n(\bar{u}_n,\bar{v}_n):=\displaystyle\sum_{k=1}^n\omega_k(u_k,v_k).
\]
Then $\bar{\omega}_n$ is also a  weak symplectic form on $\overline{\mathbb{M}}_n$ and it is easy to see that $(\overline{V}_n, \overline{F}_n)$ is a Darboux chart for $\bar{\omega}_n$ around $\bar{x}_n$. Now it is clear that the sequence $\left( \bar{\omega}_n\right)  _{n\in \mathbb{N}}$ of weak symplectic forms are compatible  and so give rise to a weak symplectic form $\bar{\omega}$ on $\overline{\mathbb{M}}$. Then $(\overline{V}=\underleftarrow{\lim}\overline{V}_n, \underleftarrow{\lim}\overline{F}_n)$ is a Darboux chart around $\bar{x}:=\underleftarrow{\lim}\bar{x}_n$ if  $V$ is an open set if and only if $\overline{V}_n=\overline{M}_n$ for any $n\in \mathbb{N}$ outside a finite subset $J\subset \mathbb{N}$. Such a situation occurs for instance in the following contexts: 
\begin{enumerate}
\item$ \omega_k$ is a linear Darboux form on the Banach space $\mathbb{M}_k$ for all $k\in \mathbb{N}$ eventually outside of finite set $J$ (cf. section \ref{linearsymplectic}).

\item$ \omega_k$ is a weak linear  symplectic form on the reflexive Banach space $\mathbb{M}_k$ for all $k\in \mathbb{N}^\ast$  eventually outside of finite set $J$ (cf. \cite{CaPe} Proposition B.3 Point (3))
 \item $\mathbb{H}$ is a separable infinite-dimensional real Hilbert space and we consider:
\begin{itemize}
\item[--]
$\mathbb{M}_k=\mathbb{H}$ for each integer  $k\in \mathbb{N}$;
\item[--]
$S_k:\mathbb{H}\to \mathbb{H}$ is  a compact operator with dense
range, but proper subset of $\mathbb{H}$, which is self adjoint and positive\footnote{ such operators $S_k$ exist since  the Hilbert space $\mathbb{H}$ is separable and infinite-dimensional} (such an operator is injective)
\item[--]
 $\hat{\omega}$ a linear Darboux form on $\mathbb{H}$ and $\omega_k=S_k^*\hat{\omega}$ for at most a finite number of integers and otherwise $S_k=Id_{\mathbb{H}}$	
\end{itemize}
\end{enumerate}
\end{example}

From the example of \cite{Ma2}, we can obtain the following example for which \textbf{there is no Darboux chart} on a submersive projective limit of symplectic Banach manifolds:

\begin{example}
\label{Ex_NoDarbouxChart} \normalfont Let $\mathbb{H}$ be a separable infinite-dimensional real Hilbert space  endowed with its
inner product $<\;,\;>$. If $g$ is a weak Riemannian metric on $\mathbb{H}$, we may use the trivialization $T\mathbb{H}=\mathbb{H}\times\mathbb{H}$ to define a weak symplectic form $\omega$ in the following way (\cite{Ma2}):
\[
2\;\omega_{(x,e)}((u,v),(u',v'))=D_{x}g_{x}(e,u).u'%
-D_{x}g_{x}(e,u').u+g_{x}(v',u)-g_{x}(v,u').
\]
Then the operator $\omega_{(x,e)}^\flat: T_{(x,e)}\mathbb{H}\times\mathbb{H}\to T_{(x,e)}^*\mathbb{H}\times\mathbb{H}$ can be written as a matrix of type
\[
\displaystyle\frac{1}{2}\begin{pmatrix}
\Gamma_{(x,e)}& -g_x^\flat\\
g_x^\flat&0\\
\end{pmatrix}
\]
Since $g_x^\flat$ is always injective by assumption, it follows that  $\omega_{(x,e)}^\flat$ is always injective and is surjective if and only $g_x^\flat$ is so. It follows that  if $\Sigma$ is the set of points $x\in \mathbb{H}$ where $g_x^\flat$ is  not surjective, then the set of points $(x,e)\in T\mathbb{H}$ where $\omega_{(x,e)}$ is not a strong symplectic form is precisely $\Sigma\times \mathbb{H}$.\\

As at the end of the above Example, let  $S:\mathbb{H}\to\mathbb{H}$ be a compact operator with dense
range, but proper subset of $\mathbb{H}$, which is self adjoint and positive.
Given a fixed $a\in\mathbb{H}$, then $A_{x}=||x-a||^{2}Id_{\mathbb{H}}+S$ is a
smooth field of bounded operators of $\mathbb{H}$ which is an isomorphism for
all $x\not =a$ and $A_{a}(\mathbb{H})\not =\mathbb{H}$ but  $A_{a}(\mathbb{H})$
 is dense in $\mathbb{H}$ (cf. \cite{Mars2}). Then $g_{x}(e,f)=<A_{x}(e),f>$ is a weak Riemaniann metric and
the associated symplectic form $\omega_{(x,e)}$ is not a strong symplectic form if and only if $(x,e)$ belongs to $\{a\}\times \mathbb{H}$
and, in this case, the range of $\omega_{(x,e)}^\flat$ is dense in $T_{(x,e)}^*(\mathbb{H}\times\mathbb{H})\equiv \mathbb{H}\times\mathbb{H}$.\\

For each $k\in \mathbb{N}^\ast$ and any $x\in \mathbb{H}$ we set
\[
(A_{k})_{x}=||x-\frac{a}{k}||^{2}Id_{\mathbb{H}}+S.
\]
We consider the Hilbert space $\displaystyle\overline{\mathbb{H}}_n =\prod_{k=1}^n \mathbb{H}_k$  where  $\mathbb{H}_k=\mathbb{H}$ and provided with the inner product
\[
<(x_1,\dots,x_n),(y_1,\dots,y_n)>_n=\displaystyle\sum_{k=1}^n <x_k,y_k>
\]
 As in the previous example, we identify $\overline{\mathbb{H}}_n$ with $\overline{\mathbb{H}}_n\times\{0\}$ in  $\overline{\mathbb{H}}_{n+1}$.  From now on, we will use the notations introduced in  Example \ref{Ex_existDarbouxProjectiveLimit}.\\
For any $\bar{x}_n=(x_1,\dots,x_{n})\in\overline{\mathbb{H}}_{n}$, we set
\[
(\ell_{n})_{\bar{x}_n}=\left((A_1)_{x_1},\dots, (A_n)_{x_n}\right) .
\]
We denote by $g_{n}$ the Riemannian metric on $\overline{\mathbb{H}}_{n}$ defined by
\[
(g_{n})_{\bar{x}_n}(\bar{u}_n,\bar{v}_n)=<(\ell_{n})_{(x_1,\dots,x_n)}(\bar{u}_n),\bar{v}_n>_n
\]
for all  $ \bar{u}_n$ and  $ \bar{v}_n$ in $ \overline{\mathbb{H}}_{n}$. Thus
we can  consider the weak symplectic form $\omega_{n}$ associated to $g_{n}$ as
above.  Therefore the maximal open set  on which $\omega_n$ is  a strong symplectic form is  the open set
$$\mathcal{U}_n=T\overline{\mathbb{H}}_n\setminus\displaystyle\bigcup_{k=1}^n(\{\frac{a}{k}\}\times\prod_{k=2}^n\mathbb{H}_k)\times \overline{\mathbb{H}}_n$$

By construction, for all $j\geq n$ and $n\in \mathbb{N}^\ast$,  we  have
$$\delta_n^j\circ{(\ell_j)_{\delta_n^j(\bar{x}_j)}}=(\ell_n)_{\bar{x}_n}\circ \delta_n^j.$$

We set $\overline{\mathbb{H}}=\underleftarrow{\lim}\overline{\mathbb{H}}_{n}$. From all the above considerations, it follows  that the sequence
$\left(  \omega_{n}\right)  _{n\in \mathbb{N}^*}$ is  a family of compatible
weak symplectic forms  which induces a weak symplectic form $\omega$ on the Fr\'echet manifold $T\overline{\mathbb{H}}\equiv\overline{\mathbb{H}}\times\overline{\mathbb{H}}$ since, as in the general case, the cotangent space
$T_{(\bar{x},\bar{u})}^{\ast}(\overline{\mathbb{H}}\times\overline{\mathbb{H}})$ does not have a Fr\'echet structure, which implies that  $\omega^\flat$ can not be surjective.\\
Now, for each $n\in \mathbb{N}^\ast$, since $(0,0)$ belongs to the open set  $\mathcal{U}_n$, we have a Darboux chart $(\overline{V}_{n},\overline{F}_{n})$ around $(0,0)\in T\overline{\mathbb{H}}_n$ from the classical Darboux Theorem for strong symplectic Banach manifold  (cf. \cite{Ma1} or
\cite{Wei} for instance). Since $\omega_n$ is  a strong symplectic form on $\overline{V}_n$ we must  have $\overline{V}_n\subset \mathcal{U}_n$.
But from the definition of $\mathcal{U}_n$,  it follows that
\[
\delta_1^n(\overline{V}_n)\cap T \overline{\mathbb{H}}_1\subset \{(x, u)\in \mathbb{H}\times\mathbb{H}\;:\; ||x||<\frac{1}{n}\}.
\]
Therefore,  according to Remark \ref{R_ProjectiveLimitTopologyFrechettoplogy},
the sequence $(\overline{V}_n, \overline{F}_n)$ is not a projective sequence of charts and so there is \textbf{no Darboux chart} for $\omega$ around $(0,0)\in T\overline{\mathbb{H}}$.
\end{example}.

\appendix\section{Projective limits of topological spaces}
\label{__ProjectiveLimitsOfTopologicalSpaces}

\begin{definition}
\label{D_ProjectiveSequenceTopologicalSpaces}
A projective sequence of topological spaces\index{projective sequence!of topological spaces} is a sequence\\
 $\left( \left(  X_{i},\delta_{i}^{j}\right) \right)_{(i,j) \in \mathbb{N}^2,\; j \geq i}$ where

\begin{description}
\item[\textbf{(PSTS 1)}]
For all $i\in\mathbb{N},$ $X_{i}$ is a topological space;

\item[\textbf{(PSTS 2)}]
For all $\left( i,j \right)\in\mathbb{N}^2$ such that $j\geq i$,
$\delta_{i}^{j}:X_{j}\to X_{i}$ is a continuous map;

\item[\textbf{(PSTS 3)}]
For all $i\in\mathbb{N}$, $\delta_{i}^{i}={Id}_{X_{i}}$;

\item[\textbf{(PSTS 4)}]
For all $\left( i,j,k \right)\in\mathbb{N}^3$ such that $k \geq j \geq i$, $\delta_{i}^{j}\circ\delta_{j}^{k}=\delta_{i}^{k}$.
\end{description}
\end{definition}

\begin{notation}
\label{N_ProjectiveSequence}
For the sake of simplicity, the projective sequence $\left( \left(  X_{i},\delta_{i}^{j}\right) \right)_{(i,j) \in \mathbb{N}^2,\; j \geq i}$ will be denoted $\left(  X_{i},\delta_{i}^{j} \right) _{j\geq i}$.
\end{notation}

An element $\left(  x_{i}\right)  _{i\in\mathbb{N}}$ of the product
${\displaystyle\prod\limits_{i\in\mathbb{N}}}X_{i}$ is called a \emph{thread}\index{thread} if, for all $j\geq i$, $\delta_{i}^{j}\left(  x_{j}\right)=x_{i}$.

\begin{definition}
\label{D_ProjectiveLimitOfASequence}
The set $X=\underleftarrow{\lim}X_{i}$\index{$X=\underleftarrow{\lim}X_{i}$} of all threads, endowed with the finest topology for which all the projections $\delta_{i}:X\to X_{i} $ are continuous, is called the projective limit of the sequence\index{projective limit!of a sequence} $\left(  X_{i},\delta_{i}^{j} \right) _{j\geq i}$.
\end{definition}

A basis\index{basis!of a topology} of the topology of $X$ is constituted by the subsets $\left( \delta_{i} \right)  ^{-1}\left(  U_{i}\right)  $ where $U_{i}$ is an open subset of $X_{i}$ (and so $\delta_i$ is open whenever $\delta_i$ is surjective).

\begin{definition}
\label{D_ProjectiveSequenceMappings}
Let $\left(  X_{i},\delta_{i}^{j} \right)  _{j\geq i}$ and $\left(  Y_{i},\gamma_{i}^{j} \right)  _{j\geq i}$ be two projective sequences whose respective projective limits are $X$ and $Y$.

A sequence $\left(  f_{i}\right)  _{i\in\mathbb{N}}$ of continuous mappings $f_{i}:X_{i}\to Y_{i}$, satisfying, for all $(i,j) \in \mathbb{N}^2,$ $j \geq i,$ the coherence condition\index{coherence condition}
\[
\gamma_{i}^{j}\circ f_{j}=f_{i}\circ\delta_{i}^{j}%
\]
is called a projective sequence of mappings\index{projective sequence!of mappings}.
\end{definition}

The projective limit of this sequence is the mapping
\[
\begin{array}
[c]{cccc}%
f: & X & \to & Y\\
& \left(  x_{i}\right)  _{i\in\mathbb{N}} & \mapsto & \left(  f_{i}\left(
x_{i}\right)  \right)  _{i\in\mathbb{N}}%
\end{array}
\]

The mapping $f$ is continuous if all the $f_{i}$ are continuous.

\section{Projective limits of Banach spaces}
\label{__ProjectiveLimitsOfBanachSpaces}
Consider a projective sequence $\left(  \mathbb{E}_{i},\delta_{i}^{j} \right)  _{j\geq i}$ of Banach spaces.
\begin{remark}
\label{R_ProjectiveSequenceOfBondingsMapsBetweenBanachSpacesDeterminedByConsecutiveRanks}
Since we have a countable sequence of Banach spaces, according to the properties of bonding maps, the sequence  $\left( \delta_i^j\right)_{(i,j)\in \mathbb{N}^2, \;j\geq i}$ is well defined by the sequence of bonding maps $\left( \delta_i^{i+1}\right) _{i\in \mathbb{N}}$.
\end{remark}
Fix some  norm $\|\;\|_i$  on $\mathbb{E}_i$, for all $i\in \mathbb{N}$. If $x=\underleftarrow{\lim}x_i$,   then  $p_n(x)=\displaystyle\max_{0\leq i\leq n} \|x_i\|_i$  is a semi-norm on the projective limit $\mathbb{F}=\underleftarrow{\lim}\mathbb{E}_n$ which provides a structure of Fr\'echet space on this vector space (see \cite{DGV}).\\
\begin{definition}
\label{D_ReducedProjectiveSequence}
A projective sequence  $\left(  \mathbb{E}_{i},\delta_{i}^{j} \right)  _{j\geq i}$ of Banach spaces is called reduced\index{reduced projective sequence}\index{projective sequence!reduced} if the range of $\delta_i^{i+1}$ is dense for all $i\in \mathbb{N}$.
\end{definition}


\begin{definition}
\label{D_EquivalentProjectiveSequencesOfBanachSpaces}
Two projective sequences  $\left(  \mathbb{E}_{i},\delta_{i}^{j} \right)  _{j\geq i}$ and  $\left(  {\mathbb{E}'}_{i},{\delta'}_{i}^{j} \right)  _{j\geq i}$ of Banach spaces are called equivalent if there exist isometries $A_i: \mathbb{E}_i\to {\mathbb{E}'}_i$ for all $i\in \mathbb{N}$ such that
\[
\delta_i^{i+1}=A_i^{-1}\circ {\delta'}_i^{i+1}\circ A_{i+1}.
\]
\end{definition}
Of course, any  projective sequence $ \left( \mathbb{E}_i,\delta_i^{j}\right)_{j\geq i}$ of Banach spaces is not  reduced and, in general, such a sequence is not equivalent to a reduced one.  However, by replacing each $\mathbb{E}_i$ by the closure $\mathbb{E}'_i$ in $\mathbb{E}_i$ of $\delta_i^{i+1}(\mathbb{E}_{i+1})$ and $\delta_i^{i+1}$ by the restriction ${\delta'}_i^{i+1}$ of $\delta_i^{i+1}$ to $ \mathbb{E}'_{i+1}$, we produce a reduced sequence of Banach spaces $\left( \mathbb{E}'_i,{\delta'}_i^{j} \right) _{j\geq i} $ such that $\underleftarrow{\lim}\mathbb{E}_i=\underleftarrow{\lim}\mathbb{E}'_i$.\\
Conversely, any Fr\'echet space provided with a countable family of semi-norms is topologically isomorphic to the projective limit of a reduced projective sequence.\\

\medskip

A particular important case of  projective limit of a reduced projective sequence of Banach spaces corresponds to the case of a decreasing sequence:
\[
\mathbb{E}_0\supset \mathbb{E}_1\supset\cdots\supset \mathbb{E}_i\supset \mathbb{E}_{i+1}\supset\cdots
\]
fulfilling, for any $i \in \mathbb{N}$, the properties:
\begin{description}
\item[\textbf{(DecS 1)}]
the inclusion $\iota_i^{i+1}:\mathbb{E}_{i+1} \to \mathbb{E}_{i}$ is continuous;
\item[\textbf{(DecS 2)}]
$\mathbb{E}_{i+1}$ is dense in $\mathbb{E}_i$.
\end{description}
Then the projective limit $\underleftarrow{\lim}\mathbb{E}_i$ is the intersection $\displaystyle \bigcap_{i\in \mathbb{N}} \mathbb{E}_i$; it is called an inverse limit of Banach spaces \index{inverse limit!of Banach spaces} or $\mathsf{ILB}$\index{$\mathsf{ILB}$} for short (cf. \cite{Omo}). In fact, any Fr\'echet space is an $\mathsf{ILB}$ space (cf. Appendix A).

\section{Projective limits of differential maps}
\label{__ProjectiveLimitsOfDifferentialMapsBetweenFrechetSpaces}
The following proposition (cf. \cite{Gal1}, Lemma 1.2) is essential
\begin{proposition}
\label{P_ProjectiveLimitsOfDifferentialMaps}
 Let $\left( \mathbb{E}_i,\delta_i^j \right) _{j\geq i}$ be a projective sequence of Banach spaces whose projective limit is the Fréchet space $\mathbb{F}=\underleftarrow{lim} \mathbb{E}_i$ and $ \left( f_i : \mathbb{E}_i \to \mathbb{E}_i  \right) _{i \in \mathbb{N}} $ a projective sequence of differential maps whose projective limit is $f=\underleftarrow{\lim} f_i$.
Then the following conditions hold:
\begin{enumerate}
\item
$f$ is smooth in the convenient sense (cf. \cite{KrMi})
\item
For all $x = \left( x_i \right) _{i \in \mathbb{N}}$, $df_x = \underleftarrow{\lim} { \left( df_i \right) }_{x_i} $.
\item
$df = \underleftarrow{\lim}df_i$.
\end{enumerate}
\end{proposition}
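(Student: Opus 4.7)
The plan is to unfold the three statements in sequence, each one leaning on the previous. First I would verify that $f:\mathbb{F}\to \mathbb{F}$ is well-defined on threads: given $x=(x_i)\in\mathbb{F}$, the coherence condition $\delta_i^j\circ f_j=f_i\circ\delta_i^j$ yields $\delta_i^j(f_j(x_j))=f_i(\delta_i^j(x_j))=f_i(x_i)$, so $(f_i(x_i))_{i\in\mathbb{N}}$ is again a thread, and $f=\underleftarrow{\lim}f_i$ is a well-defined map of $\mathbb{F}$ into itself.

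For (1), I would appeal to the curve-based characterization of smoothness in the convenient setting. Since $\mathbb{F}=\underleftarrow{\lim}\mathbb{E}_i$ carries the initial topology with respect to the projections $\delta_i$, a curve $c:\mathbb{R}\to\mathbb{F}$ is smooth if and only if each $\delta_i\circ c$ is smooth. Then
\[
\delta_i\circ f\circ c=f_i\circ \delta_i\circ c,
\]
which is smooth as a composition of the smooth Banach map $f_i$ with a smooth curve in $\mathbb{E}_i$. Hence $f\circ c$ is smooth for every smooth curve $c$, so $f$ is smooth in the convenient sense.

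For (2), differentiating the coherence relation at the point $x_j$ gives
\[
\delta_i^j\circ (df_j)_{x_j}=(df_i)_{\delta_i^j(x_j)}\circ \delta_i^j=(df_i)_{x_i}\circ \delta_i^j,
\]
so $\bigl((df_i)_{x_i}\bigr)_{i\in\mathbb{N}}$ is a compatible sequence of bounded linear operators, and its projective limit $L_x:\mathbb{F}\to\mathbb{F}$ exists as a continuous linear map. To identify $L_x$ with $df_x$, I would pick any smooth curve $c$ with $c(0)=x$ and $c'(0)=v=(v_i)$; then $(\delta_i\circ c)'(0)=v_i$, and since $\delta_i$ is linear continuous
\[
\delta_i(df_x(v))=(f_i\circ\delta_i\circ c)'(0)=(df_i)_{x_i}(v_i)=\delta_i(L_x(v))
\]
for every $i$, forcing $df_x=L_x$.

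Finally, (3) follows by combining (2) with the observation that the maps $x_i\mapsto (df_i)_{x_i}$ form themselves a projective sequence of smooth maps (in the appropriate convenient sense) between their target spaces of bounded operators, so the very argument used for (1) applies again to the family $\{df_i\}$. The principal technical subtlety I expect lies in handling the non-Banach nature of the target for $df$, since the space $\mathcal{L}(\mathbb{F},\mathbb{F})$ does not inherit a natural Banach structure; but because the pointwise statement (2) already identifies the differential as a projective limit fibrewise, the global identification $df=\underleftarrow{\lim}df_i$ reduces to showing that the sequence of differentials is itself coherent, which is exactly what the derivation of (2) provides.
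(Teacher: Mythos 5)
Your argument is correct and is essentially the standard one: the paper itself gives no proof of this proposition, citing it from \cite{Gal1} (Lemma 1.2), and the proof there proceeds exactly as you do — well-definedness on threads from the coherence condition, smoothness tested through the projections $\delta_i$ (equivalently, via smooth curves, since $\mathbb{F}$ carries the initial convenient structure as a closed subspace of $\prod_i\mathbb{E}_i$), and identification of $df_x$ with $\underleftarrow{\lim}(df_i)_{x_i}$ by differentiating the coherence relation and using that the $\delta_i$ jointly separate points. Your closing remark about $\mathcal{L}(\mathbb{F},\mathbb{F})$ is the right caveat; the paper handles that issue separately via the space $\mathcal{H}(\mathbb{F}_1,\mathbb{F}_2)$ of Appendix F, and reading statement (3) fibrewise as you do is the intended meaning.
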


\section{Projective limits of Banach manifolds}
\label{__ProjectiveLimitsOfBanachManifolds}

\begin{definition}
\label{D_ProjectiveSequenceofBanachManifolds}
The projective sequence $\left( M_{i},\delta_{i}^{j} \right) _{j\geq i}$ is called \textit{projective sequence of Banach manifolds}\index{projective sequence!of Banach manifolds} if
\begin{description}
\item[\textbf{(PSBM 1)}]
$M_{i}$ is a manifold modeled on the Banach space $\mathbb{M}_{i}$;

\item[\textbf{(PSBM 2)}]
$\left(  \mathbb{M}_{i},\overline{\delta_{i}^{j}}\right) _{j\geq i}$ is a projective sequence of Banach spaces;

\item[\textbf{(PSBM 3)}]
For all $x=\left(  x_{i}\right)  \in M=\underleftarrow{\lim}M_{i}$, there exists a projective sequence of local
charts $\left(  U_{i},\varphi_{i}\right)  _{i\in\mathbb{N}}$ such that
$x_{i}\in U_{i}$ where one has the relation
\[
\varphi_{i}\circ\delta_{i}^{j}=\overline{\delta_{i}^{j}}\circ\varphi_{j};
\]

\item[\textbf{(PSBM 4)}]
 $U=\underleftarrow{\lim}U_{i}$ is a non empty open set in $M$.
\end{description}
\end{definition}

Under the assumptions   \textbf{(PSBM 1)} and  \textbf{(PSBM 2)} in Definition \ref{D_ProjectiveSequenceofBanachManifolds}, the assumptions \textbf{(PSBM 3)}] and \textbf{(PSBM 4)}  around $x\in M$ is called \emph{the projective limit chart property} around $x\in M$ and  $(U=\underleftarrow{\lim}U_{i}, \phi=\underleftarrow{\lim}\phi_{i})$ is called a \emph{projective limit chart}.

The projective limit $M=\underleftarrow{\lim}M_{i}$ has a structure of Fr\'{e}chet manifold modeled on the Fr\'{e}chet space $\mathbb{M}
=\underleftarrow{\lim}\mathbb{M}_{i}$ and is called a \emph{$\mathsf{PLB}$-manifold}\index{$\mathsf{PLB}$-manifold}. The differentiable structure is defined \textit{via} the charts $\left(  U,\varphi\right)  $ where $\varphi
=\underleftarrow{\lim}\varphi_{i}:U\to\left(  \varphi_{i}\left(U_{i}\right)  \right) _{i \in \mathbb{N}}.$\\
$\varphi$ is a homeomorphism (projective limit of homeomorphisms) and the charts changings $\left(  \psi\circ
\varphi^{-1}\right)  _{|\varphi\left(  U\right)  }=\underleftarrow{\lim
}\left(  \left(  \psi_{i}\circ\left(  \varphi_{i}\right)  ^{-1}\right)
_{|\varphi_{i}\left(  U_{i}\right)  }\right)  $ between open sets of
Fr\'{e}chet spaces are smooth in the sense of convenient spaces.

\begin{remark}
\label{R_ProjectiveLimitTopologyFrechettoplogy}
If $M$ is the projective  limit of the sequence $\left( M_{i},\delta_{i}^{j} \right) _{j\geq i}$, then, as a set, $M$ can identified  with
\[
 \left\lbrace (x_i)_{i\in\mathbb{N}}\in\prod_{i\in \mathbb{N}} M_i:\; \forall j\geq i, \; x_i=\delta_i^j(x_j)\right\rbrace.
\]
Since each $M_i$ is a topological space, we can provide $\displaystyle\prod_{i\in \mathbb{N}} M_i$ with the product topology and so, since each $\delta_i^j$ is continuous, it follows that $M$ is a closed subset in $\displaystyle\prod_{i\in \mathbb{N}} M_i$ which can be provided with the induced topology generated by the open sets of type $\displaystyle\prod_{i\in \mathbb{N}}V_i\bigcap M$ where $V_i$ is an open set of $M_i$ for a finite number of indices $i$ and otherwise $V_i=M_i$.
\end{remark}

The sequence $\left(  M_{i},\delta_{i}^{j}\right)_{j\geq i}$ is called \emph{reduced projective sequence of Banach manifolds}\index{reduced projective sequence!of Banach manifolds}
if the sequence  $\left(  \mathbb{M}_{i},\overline{\delta_{i}^{j}}\right)  _{j\geq i}$ is a reduced projective sequence of Banach spaces. Then $\delta_i^{j}(M_{j})$ is dense in $M_i$ for all $j\geq i$. We will say that   $\left(  M_{i},\delta_{i}^{j}\right)  _{j\geq i}$ is a \textit{reduced projective sequence}\index{reduced!projective sequence} and  $M=\underleftarrow{\lim}M_i$ is a  \emph{reduced $\mathsf{PLB}$-manifold}\index{reduced!$\mathsf{PLB}$-manifold}.
This situation occurs when the bonding map $\delta_i^j$ is a surjective submersion from $M_j$ onto $M_i$ for all $j\geq i$. In this case,  we say that   $\left(  M_{i},\delta_{i}^{j}\right)  _{j\geq i}$  is a \textit{surjective projective sequence} and  $M=\underleftarrow{\lim}\mathbb{M}_i$ is a  \textit{surjective $\mathsf{PLB}$-manifold}\index{surjective $\mathsf{PLB}$-manifold}\index{$\mathsf{PLB}$-manifold}.
More particular is the situation:
\begin{definition}
\label{D_SubmersiveProjectiveSequenceOf BanachManifolds}
The sequence $\left(  M_{i},\delta_{i}^{j}\right) _{j\geq i}$ is called submersive projective sequence\index{submersive!projective sequence} of Banach manifolds if
\begin{description}
\item[\textbf{(SPSBM 1)}]
$\forall (i,j) \in \mathbb{N}^2: j \geq i, \; \delta_i^j:M_j\to M_i$ is a surjective submersion;
\item[\textbf{(SPSBM 2)}]
Around each $x\in M=\underleftarrow{\lim}\mathbb{M}_i$, there exists a projective limit chart $\left( U=\underleftarrow{\lim}U_i,\varphi
=\underleftarrow{\lim}\varphi_{i} \right) $;
\item[\textbf{(SPSBM 3)}]
For all $i\in \mathbb{N}$, there exists a decomposition $\mathbb{M}_i=\ker\bar{\delta}_i^{i+1}\oplus \mathbb{M}'_i$ such that the following diagram is commutative:
\begin{eqnarray}
\label{eq_DiagramStrongProjectiveChartLimit}
\xymatrix{
U_{i+1}\ar[r]^{\varphi_{i+1}{}\;\;\;\;\;\;\;\;\;}\ar[d]_{\delta_i^{i+1}} & (\ker\bar{\delta}_i^{i+1}\times \mathbb{M}'_i)\ar[d]^{\bar{\delta}_i^{i+1}}\\
U_i\ar[r]^{\varphi_i}&\mathbb{M}_i\\
}
\end{eqnarray}
\end{description}
Such a  chart is called a submersive projective limit chart around $x$.
\end{definition}
The projective limit $M=\underleftarrow{\lim}\mathbb{M}_i$  of a submersive projective sequence  $\left( M_{i},\delta_{i}^{j}\right) _{j\geq i}$ is called a\emph{submersive projective limit of Banach manifolds} or for short a \emph{submersive $\mathsf{PLB}$-manifold}\index{submersive!$\mathsf{PLB}$-manifold}. In this case, we have the following results (cf. \cite{BCP})

\begin{proposition}
\label{P_StrongProjectiveLimitOfBanachManifolds}
Let  $\left(  M_{i},\delta_{i}^{j}\right)  _{j\geq i}$  be  a surjective (resp. submersive)  projective sequence. Then, for each $i\in \mathbb{N}$, the map $\delta_i:M\to M_i$ is surjective (resp. is a submersion).
\end{proposition}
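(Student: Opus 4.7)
My plan is to treat the two statements separately, the surjective case by an inductive thread construction, and the submersive case by exhibiting a local product chart in which $\delta_i$ becomes a linear projection.

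\emph{Step 1 (surjectivity).} Fix $i\in\mathbb{N}$ and $y\in M_i$. To show $\delta_i$ is surjective I will build a thread $x=(x_j)_{j\in\mathbb{N}}\in M$ with $x_i=y$. For indices $j\leq i$ I simply set $x_j:=\delta_j^i(y)$, which is coherent because $\delta_k^j\circ\delta_j^i=\delta_k^i$ by \textbf{(PSTS 4)}. For indices $j>i$ I recursively pick $x_{j+1}\in(\delta_j^{j+1})^{-1}(x_j)$; such preimages exist because each bonding map $\delta_j^{j+1}$ is surjective by assumption. The resulting sequence is a thread by construction, hence $x\in M$, and $\delta_i(x)=x_i=y$. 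This is straightforward and uses nothing beyond the definition of $M$ and the axiom of (dependent) choice.

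\emph{Step 2 (submersive case).} Now assume $(M_i,\delta_i^j)_{j\geq i}$ is submersive. Fix $x\in M$ with $x_j=\delta_j(x)$, and pick a submersive projective limit chart $(U=\underleftarrow{\lim}U_j,\varphi=\underleftarrow{\lim}\varphi_j)$ around $x$ as in Definition \ref{D_SubmersiveProjectiveSequenceOf BanachManifolds}. Using the decomposition $\mathbb{M}_{j+1}=\ker\overline{\delta_j^{j+1}}\oplus\mathbb{M}'_j$ (where $\overline{\delta_j^{j+1}}$ restricted to $\mathbb{M}'_j$ is a topological isomorphism onto $\mathbb{M}_j$), I iterate downward to obtain, for every $n\geq i$, an identification
\[
\mathbb{M}_n \;\cong\; \Bigl(\prod_{k=i}^{n-1}\ker\overline{\delta_k^{k+1}}\Bigr)\times \mathbb{M}_i
\]
under which $\overline{\delta_i^n}$ becomes the projection onto the second factor. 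Passing to the projective limit yields an identification $\mathbb{M}\cong\bigl(\prod_{k\geq i}\ker\overline{\delta_k^{k+1}}\bigr)\times \mathbb{M}_i$ as Fréchet spaces, and the bonding map $\overline{\delta_i}:\mathbb{M}\to\mathbb{M}_i$ is the canonical projection onto the last factor. Because the diagrams (\ref{eq_DiagramStrongProjectiveChartLimit}) commute, the chart $\varphi=\underleftarrow{\lim}\varphi_j$ conjugates $\delta_i:U\to U_i$ to exactly this linear projection, which has continuous linear right inverse (the inclusion of the $\mathbb{M}_i$-factor) and a topologically complemented kernel. This is precisely the local normal form of a submersion in the Fréchet category, so $\delta_i$ is a submersion at $x$.

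\emph{Main obstacle.} The subtle point is not the algebra but the topology: one must check that the iterated algebraic direct sum decompositions survive the projective limit and give a genuine Fréchet product isomorphism $\mathbb{M}\cong\prod_{k\geq i}\ker\overline{\delta_k^{k+1}}\times\mathbb{M}_i$, i.e.\ that the associated splitting maps are continuous uniformly in $n$ so as to assemble into bonding maps of a projective system. This is guaranteed by \textbf{(SPSBM~3)}, since the splittings are built into the charts and hence into the bonding maps $\overline{\delta_j^{j+1}}$, but writing out the compatibility carefully (and verifying that $U=\underleftarrow{\lim}U_j$ is mapped to a genuine open set of the product under $\varphi$, using the projective-limit chart property \textbf{(PSBM~4)}) is where the real work lies.
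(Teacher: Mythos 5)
Your argument is correct, but note that the paper itself gives no proof of this proposition: it is stated with a citation to \cite{BCP} (a reference listed as ``in preparation''), so there is no in-paper proof to compare against. Your two steps are exactly what is needed. The surjectivity argument by dependent choice along consecutive bonding maps is the standard one for sequential (countable) projective systems and is complete, since coherence for consecutive indices propagates to all pairs by \textbf{(PSTS 4)}. For the submersive case, your iterated splitting $\mathbb{M}_n\cong\bigl(\prod_{k=i}^{n-1}\ker\overline{\delta_k^{k+1}}\bigr)\times\mathbb{M}_i$ and its passage to the limit is precisely the mechanism the paper itself deploys elsewhere, namely in Lemma \ref{L_EProductOfBanachSpaces} and in the proof of Corollary \ref{C_CoherentWeakSymplecticBanachManifold}, where $\mathbb{M}$ is identified with $\prod_{l=0}^{\infty}\mathbb{M}'_l$ and the projections become coordinate projections in the limit charts; so your route is the intended one. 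Two small remarks: you correctly repaired the index slip in the paper's condition \textbf{(SPSBM 3)} (the displayed decomposition should read $\mathbb{M}_{i+1}=\ker\overline{\delta_i^{i+1}}\oplus\mathbb{M}'_i$, as the commutative diagram makes clear), and your closing caveat about checking that $\varphi(U)$ is a genuine open box in the product is exactly the content of \textbf{(PSBM 4)}/\textbf{(SPSBM 2)}, so it is an assumption rather than something left to prove.
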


Under the assumptions of Proposition \ref{P_StrongProjectiveLimitOfBanachManifolds},  in fact each $\delta_i^j:M_j\to M_i$ is a surjective submersion for all $j\geq i$ where $(i,j) \in \mathbb{N}^2$.\\

Another important  situation of reduced $\mathsf{PLB}$-manifold, is the case of \textit{$\mathsf{ILB}$-manifold}\index{$\mathsf{ILB}$-manifold}\index{manifold!$\mathsf{ILB}$} defined as follows:
\begin{definition}
\label{D_ILBManifold}
A $\mathsf{PLB}$-manifold $M=\underleftarrow{\lim}\mathbb{M}_i$ is called $\mathsf{ILB}$-manifold\index{$\mathsf{ILB}$-manifold} if
\begin{description}
\item[\textbf{(ILBM 1)}]
$\forall i \in \mathbb{N}, \;M_{i+1}\subset M_i$;
\item[\textbf{(ILBM 2)}]
$\forall i \in \mathbb{N}, \;\delta_i^{i+1}:M_{i+1}\to M_i$ is the canonical inclusion which is a weak immersion with dense range.
\end{description}
\end{definition}
Note that this definition is stronger than the definition of $\mathsf{ILB}$-manifold in the Omori's sense (see \cite{Omo}) since we impose the condition $\textbf{(PSBM4)}$.  In this case, $M=\displaystyle\bigcap_{i\in \mathbb{N}} M_i$.

\section{Projective limits of Banach vector bundles}
\label{__ProjectiveLimitsOfBanachVectorBundles}

Let $\left(  M_{i},\delta_{i}^{j}\right)  _{j\geq i}$ be a projective sequence of Banach manifolds where each
manifold $M_{i}$ is modeled on the Banach space $\mathbb{M}_{i}$.\\
For any integer $i$, let $\left(  E_{i},\pi_{i},M_{i}\right)  $ be the Banach
vector bundle whose type fibre is the Banach vector space $\mathbb{E}_{i}$
where $\left(  \mathbb{E}_{i},\lambda_{i}^{j}\right)  _{j\geq i}$ is a projective sequence of Banach spaces.

\begin{definition}
\label{D_ProjectiveSequenceBanachVectorBundles}
$\left( (E_i,\pi_i,M_i),\left(f_i^j,\delta_i^j \right) \right)_{j \geq i}$, where $f_{i}^{j}:E_j \to E_i$ is a morphism of vector bundles, is called a projective sequence of Banach vector bundles\index{projective sequence!of Banach vector bundles} on the projective sequence of manifolds $\left(  M_{i},\delta_{i}^{j}\right)  _{j\geq i}$ if for
all $\left(  x_{i}\right)  $ there exists a projective sequence of
trivializations $\left(  U_{i},\tau_{i}\right)  $ of $\left(  E_{i},\pi
_{i},M_{i}\right)  ,$ where $\tau_{i}:\left(  \pi_{i}\right)  ^{-1}\left(
U_{i}\right)  \to U_{i}\times\mathbb{E}_{i}$ are local
diffeomorphisms, such that $x_{i}\in U_{i}$ (open in $M_{i}$) and where
$U=\underleftarrow{\lim}U_{i}$ is a non empty open set in $M$
 where, for all $(i,j) \in \mathbb{N}^2$ such that $j\geq i,$ we have the compatibility condition
\begin{description}
\item[(\textbf{PLBVB})]
$\left(  \delta_{i}^{j}\times\lambda_{i}^{j}\right)  \circ\tau_{j}=\tau_{i}\circ f_{i}^{j}$.
\end{description}
\end{definition}

With the previous notations,  $(U=\underleftarrow{\lim}U_{i}, \tau=\underleftarrow{\lim}\tau_i)$   is called a \emph{ projective bundle chart limit}\index{projective bundle chart limit}. The triple of  projective limit
 $(E=\underleftarrow{\lim}E_{i}, \pi=\underleftarrow{\lim}\pi_{i}, M=\underleftarrow{\lim}M_{i}))$ is called a \emph{projective limit of Banach bundles} or $\mathsf{PLB}$-bundle\index{$\mathsf{PLB}$-bundle} for short. \\

The following proposition generalizes the result of \cite{Gal3} about the projective limit of tangent bundles to Banach manifolds. 

\begin{proposition}
\label{P_ProjectiveLimitOfBanachVectorBundles}
Let $\left( (E_i,\pi_i,M_i),\left(f_i^j,\delta_i^j \right) \right)_{j \geq i}$ be a projective sequence of Banach vector bundles. \\
Then $\left(  \underleftarrow{\lim}E_i,\underleftarrow{\lim}\pi_i,\underleftarrow{\lim}M_i \right)  $ is a Fr\'{e}chet vector bundle.
\end{proposition}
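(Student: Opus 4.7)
The plan is to build the Fr\'echet bundle structure directly from the projective limits of the trivializations, using the compatibility condition \textbf{(PLBVB)} as the glue. First I would verify that $E=\underleftarrow{\lim}E_i$ carries a natural Fr\'echet manifold structure and that $\pi=\underleftarrow{\lim}\pi_i:E\to M$ is a well-defined smooth map. For the manifold structure on $E$, I would regard each trivialization $\tau_i:\pi_i^{-1}(U_i)\to U_i\times\mathbb{E}_i$ as giving a chart of $E_i$ modelled on $\mathbb{M}_i\times\mathbb{E}_i$, and observe that the condition $(\delta_i^j\times\lambda_i^j)\circ\tau_j=\tau_i\circ f_i^j$ exhibits $(E_i,f_i^j)_{j\geq i}$ as a projective sequence of Banach manifolds in the sense of Definition \ref{D_ProjectiveSequenceofBanachManifolds}, with projective model $\mathbb{M}\times\mathbb{E}$, where $\mathbb{E}=\underleftarrow{\lim}\mathbb{E}_i$. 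The smoothness of $\pi$ then follows from Proposition \ref{P_ProjectiveLimitsOfDifferentialMaps}.

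Next I would construct the Fr\'echet bundle trivializations. Given $x=(x_i)\in M$, pick a projective sequence of bundle trivializations $(U_i,\tau_i)$ with $x_i\in U_i$ and $U=\underleftarrow{\lim}U_i$ a nonempty open set of $M$ (this is Definition \ref{D_ProjectiveSequenceBanachVectorBundles}). Using \textbf{(PLBVB)} the maps $\tau_i$ form a projective sequence of diffeomorphisms between the projective sequences $(\pi_i^{-1}(U_i),f_i^j)$ and $(U_i\times\mathbb{E}_i,\delta_i^j\times\lambda_i^j)$. Taking projective limits, I obtain
\[
\tau:=\underleftarrow{\lim}\tau_i:\pi^{-1}(U)\longrightarrow U\times\mathbb{E},
\]
which is a homeomorphism (projective limit of homeomorphisms) and smooth in the convenient sense by Proposition \ref{P_ProjectiveLimitsOfDifferentialMaps}, with smooth inverse $\underleftarrow{\lim}\tau_i^{-1}$. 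I would then check fibrewise that $\tau$ carries $\pi^{-1}(x)=\underleftarrow{\lim}\pi_i^{-1}(x_i)$ linearly and bijectively onto $\{x\}\times\mathbb{E}$: each $\tau_i$ is linear on $\pi_i^{-1}(x_i)$, so the limit is linear on $\pi^{-1}(x)$; this endows each fiber with the Fr\'echet vector space structure of $\mathbb{E}$.

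Finally, I would verify that the transition maps between two such projective limit trivializations $(U,\tau)$ and $(U',\tau')$ are fiberwise linear Fr\'echet isomorphisms depending smoothly on the base point. Since, for each $i$, $\tau'_i\circ\tau_i^{-1}$ on $(U_i\cap U'_i)\times\mathbb{E}_i$ has the form $(y_i,v_i)\mapsto(y_i,g_i(y_i)\cdot v_i)$ with $g_i:U_i\cap U'_i\to\mathrm{GL}(\mathbb{E}_i)$ smooth, and since \textbf{(PLBVB)} forces the family $(g_i)$ to be compatible with the bonding maps $\lambda_i^j$, the projective limit $g=\underleftarrow{\lim}g_i$ is well defined with values in the continuous linear isomorphisms of $\mathbb{E}$, and $\tau'\circ\tau^{-1}(y,v)=(y,g(y)\cdot v)$ is smooth on $(U\cap U')\times\mathbb{E}$. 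The expected main obstacle is precisely this last step, that is, showing that the fiberwise linear cocycle $g=\underleftarrow{\lim}g_i$ is smooth into the relevant space of continuous linear operators on the Fr\'echet space $\mathbb{E}$; this requires combining Proposition \ref{P_ProjectiveLimitsOfDifferentialMaps} with a careful check that evaluation on vectors $v=(v_i)\in\mathbb{E}$ respects projective limits, which is ensured by \textbf{(PLBVB)}. Once this is settled, the data $(U,\tau)$ together with the cocycle $\{g_{UU'}\}$ give $(E,\pi,M)$ the structure of a Fr\'echet vector bundle with typical fiber $\mathbb{E}=\underleftarrow{\lim}\mathbb{E}_i$.
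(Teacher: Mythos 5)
Your overall architecture is the standard one and matches what the paper (following Galanis) does: the compatibility condition \textbf{(PLBVB)} makes the trivializations into a projective sequence of diffeomorphisms, the limits $\tau=\underleftarrow{\lim}\tau_i$ give fibrewise linear homeomorphisms $\pi^{-1}(U)\to U\times\mathbb{E}$, and the smoothness of the chart changings follows from Proposition \ref{P_ProjectiveLimitsOfDifferentialMaps}. The first two stages of your plan are sound.

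The gap is in your last step, and it is exactly the point the paper flags immediately after the statement of the proposition. You propose to show that the cocycle $g=\underleftarrow{\lim}g_i$ is smooth into ``the relevant space of continuous linear operators on the Fr\'echet space $\mathbb{E}$''. There is no such space available in the Fr\'echet category: as recalled in Appendix F, $\mathcal{L}(\mathbb{E})$ carries only a non-metrizable locally convex topology, and the paper states explicitly that $\operatorname{GL}(\mathbb{E})$ cannot be endowed with a Lie group structure and hence cannot play the role of structural group. So the target of your operator-valued cocycle is ill-defined as stated, and you cannot argue openness of the invertibles or smoothness of inversion the way one would for a Banach bundle. The resolution, quoted in the paper right after the proposition, is to replace $\operatorname{GL}(\mathbb{E})$ by the generalized Lie group $H^{0}(\mathbb{E})=\underleftarrow{\lim}H^{0}_{i}(\mathbb{E})$, where $H^{0}_{i}(\mathbb{E})$ is the Banach--Lie group of tuples $(h_{1},\dots,h_{i})\in\prod_{j=1}^{i}\operatorname{GL}(\mathbb{E}_{j})$ commuting with the bonding maps $\lambda_{k}^{j}$. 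Condition \textbf{(PLBVB)} guarantees precisely that $(g_{1}(y_{1}),\dots,g_{i}(y_{i}))$ lands in $H^{0}_{i}(\mathbb{E})$ and that these are smooth Banach-valued maps compatible with the projections, so the transition functions form a smooth cocycle with values in $H^{0}(\mathbb{E})$, which is the structure group of the limit bundle. (Your separate claim that the joint map $(y,v)\mapsto(y,g(y)\cdot v)$ is smooth on $(U\cap U')\times\mathbb{E}$ in the convenient sense is correct and does follow from Proposition \ref{P_ProjectiveLimitsOfDifferentialMaps}; it is only the operator-valued formulation that must be rerouted through $H^{0}(\mathbb{E})$.)
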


\begin{notation}
From now on and for the sake of simplicity, the projective sequence of vector bundles $\left( (E_i,\pi_i,M_i),\left(f_i^j,\delta_i^j \right) \right)_{j \geq i}$  will be denoted $\left(  E_{i},\pi_{i},M_{i}\right)_{\underleftarrow{i}}$.
\end{notation}

Remark that $\operatorname{GL}\left(  \mathbb{E}\right)  $ cannot be endowed with a structure
of Lie group. So it cannot play the role of structural group. We then
consider, as in \cite{Gal2}, the generalized Lie group $H^{0}\left(
\mathbb{E}\right)  =\underleftarrow{\lim}H_{i}^{0}\left(  \mathbb{E}\right)
$ which is the projective limit of the Banach-Lie groups
\[
H_{i}^{0}\left(  \mathbb{E}\right)  =\left\{  \left(  h_{1},\dots
,h_{i}\right)  \in{\displaystyle\prod\limits_{j=1}^{i}}\operatorname{GL}\left(
\mathbb{E}_{j}\right)  :\lambda_{k}^{j}\circ h_{j}=h_{k}\circ\lambda_{k}%
^{j},\text{ for }k\leq j\leq i\right\}.
\]
We then obtain the differentiability of the transition functions $\mathtt{T}$.
\begin{example}
\label{Ex_ProjectiveLimitOfBanachVectorBundles}
As a particular case of Proposition \ref{P_ProjectiveLimitOfBanachVectorBundles},   we can consider
 the projective sequence of tangent bundles $\left( (E_i,\pi_i,M_i),\left(T\delta_i^j,\delta_i^j \right) \right)_{j \geq i}$ of a projective sequence of  Banach manifolds $(M_i,\delta_i^j)_{j \geq i}$. Thus, if each $M_i$ is modeled on the Banach space $\mathbb{M}_i$,  $\left(  \underleftarrow{\lim}TM_{i},\underleftarrow{\lim}\pi
_{i},\underleftarrow{\lim}M_{i} \right)  $ is a Fr\'echet vector bundle whose typical fibre is $\mathbb{M}=\underleftarrow{\lim}\mathbb{M}_{i}$ with structural group $H^{0}\left( \mathbb{M} \right)$. As we have already seen, this result was firstly proved in \cite{Gal3}.
\end{example}

As in Appendix  \ref{__ProjectiveLimitsOfBanachManifolds}, we introduce
\begin{definition}
\label{D_StrongProjectiveLimitOfBanachBundle}  A sequence $\left(  E_{i},\pi_{i},M_{i}\right)
_{\underleftarrow{i}}$ is called a submersive  projective sequence of Banach vector bundles if $\left(E_i,\pi, M_i\right)_{ \underleftarrow{i}}$ is a submersive projective sequence of Banach manifolds and if around each $x\in M$, there exists a  projective limit chart bundle $(U=\underleftarrow{\lim}U_{i}, \tau=\underleftarrow{\lim}\tau_i)$ such that for all $i\in \mathbb{N}$, we have a decomposition $\mathbb{E}_{i+1}=\ker\bar{\lambda}_i^{i+1}\oplus \mathbb{E}'_i$ such that the condition \emph{(\textbf{PLBVB})} is true.
\end{definition}

The projective limit  $(E,\pi, M)$ of a   projective sequence of Banach vector bundles $\left(E_i,\pi, M_i\right)_{ \underleftarrow{i}}$ is called a \emph{submersive projective  limit of Banach bundles} or \emph{submersive $\mathsf{PLB}$-bundle}\index{submersive $\mathsf{PLB}$-bundle} for short.\\

Now, we have the following result whose proof is similar to Proposition \ref{P_StrongProjectiveLimitOfBanachManifolds}:

\begin{proposition}
\label{P_StrongProjectiveLimitOfBanachBundle}
Let $\left(  E_{i},\pi_{i},M_{i}\right)_{\underleftarrow{i}}$ be a submersive projective sequence of Banach bundles. Then, for each $i\in \mathbb{N}$, the  map $\lambda_i: E\to E_i$ is a submersion.
\end{proposition}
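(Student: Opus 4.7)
The plan is to follow the template of the proof of Proposition \ref{P_StrongProjectiveLimitOfBanachManifolds}, transferring it from the base manifolds to the bundles. Fix $i_0 \in \mathbb{N}$ and $e \in E$ with $\pi(e) = x = \underleftarrow{\lim} x_i$. I need to exhibit a local description of $\lambda_{i_0}: E \to E_{i_0}$ around $e$ showing that its differential is surjective with split kernel (submersion in the Fr\'echet sense), and then reduce the global statement to this local model.

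First I would use Definition \ref{D_StrongProjectiveLimitOfBanachBundle} to pick a projective limit bundle chart $(U = \underleftarrow{\lim} U_i, \tau = \underleftarrow{\lim} \tau_i)$ around $x$ such that the submersive decomposition holds: for every $i$, $\mathbb{E}_{i+1} = \ker \overline{\lambda}_i^{i+1} \oplus \mathbb{E}'_i$ with compatibility $(\overline{\delta}_i^{i+1} \times \overline{\lambda}_i^{i+1}) \circ \tau_{i+1} = \tau_i \circ f_i^{i+1}$. Simultaneously, by Definition \ref{D_SubmersiveProjectiveSequenceOf BanachManifolds}, I may shrink $U$ so that the chart $\varphi = \underleftarrow{\lim}\varphi_i$ on $M$ also has the decompositions $\mathbb{M}_{i+1} = \ker \overline{\delta}_i^{i+1} \oplus \mathbb{M}'_i$.

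Next, iterating these finite-rank splittings inductively as in Lemma \ref{L_EProductOfBanachSpaces} (used already in the proof of Corollary \ref{C_CoherentWeakSymplecticBanachManifold}), I obtain identifications
\[
\mathbb{M}_i \cong \prod_{k=0}^{i} \mathbb{M}'_k, \qquad \mathbb{E}_i \cong \prod_{k=0}^{i} \mathbb{E}'_k,
\]
and, passing to the projective limit,
\[
\mathbb{M} \cong \prod_{k=0}^{\infty} \mathbb{M}'_k, \qquad \mathbb{E} \cong \prod_{k=0}^{\infty} \mathbb{E}'_k,
\]
with the bonding maps $\overline{\delta}_i^{j}$ and $\overline{\lambda}_i^{j}$ being the natural projections onto the first $i+1$ factors. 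Under the trivializations $\tau$ and $\tau_{i_0}$ (and the charts $\varphi$, $\varphi_{i_0}$), the compatibility (\textbf{PLBVB}) reads $(\overline{\delta}_{i_0} \times \overline{\lambda}_{i_0}) \circ \tau = \tau_{i_0} \circ \lambda_{i_0}$, so $\lambda_{i_0}$ becomes the canonical projection
\[
\Bigl(\prod_{k=0}^{\infty} \mathbb{M}'_k\Bigr) \times \Bigl(\prod_{k=0}^{\infty} \mathbb{E}'_k\Bigr) \longrightarrow \Bigl(\prod_{k=0}^{i_0} \mathbb{M}'_k\Bigr) \times \Bigl(\prod_{k=0}^{i_0} \mathbb{E}'_k\Bigr).
\]

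This linear projection has an obvious continuous section (inclusion into the first factors) and its kernel $\prod_{k>i_0}\mathbb{M}'_k \times \prod_{k>i_0}\mathbb{E}'_k$ is a complemented closed Fr\'echet subspace, so $\lambda_{i_0}$ is a submersion in a neighbourhood of $e$. Since $e$ was arbitrary, $\lambda_{i_0}: E \to E_{i_0}$ is a submersion. The main subtlety, and the only place where a genuine argument is needed beyond unwinding definitions, is the iterative application of (SPSBM 3) together with (\textbf{PLBVB}) to get the product decompositions of the typical fibres in a way that is \emph{compatible} between the base and the total space of the bundle; this is exactly the reason we invoke Lemma \ref{L_EProductOfBanachSpaces} in both the manifold and the fibre directions.
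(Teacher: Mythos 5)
Your argument is correct and is exactly the strategy the paper intends: the paper gives no written proof of this proposition, stating only that it is ``similar to Proposition \ref{P_StrongProjectiveLimitOfBanachManifolds}'' (itself deferred to \cite{BCP}), and your reduction via the submersive projective limit bundle charts of Definition \ref{D_StrongProjectiveLimitOfBanachBundle}, the iterated splittings of Lemma \ref{L_EProductOfBanachSpaces}, and the resulting local model of $\lambda_{i_0}$ as a canonical projection with continuous linear section and complemented kernel is a faithful implementation of that transfer from the manifold case to the bundle case. The only cosmetic point is that the factors in your product decompositions should be the successive kernels $\ker\overline{\lambda}_{k-1}^{k}$ (with $\mathbb{E}_0$ as the first factor), whereas the paper's $\mathbb{E}'_k$ denotes the complement of the kernel inside $\mathbb{E}_{k+1}$; this does not affect the argument.
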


\section{The Banach space $\mathcal{H}_b\left(  \mathbb{F}_{1},\mathbb{F}_{2}\right) $}
\label{__TheFrechetSpaceHF1F2-TheBanachSpaceHbF1F2}

Let $(\mathbb{F}_{1},\nu^1_n)$ (resp. $(\mathbb{F}_{2},\nu^2_n)$)$\mathbb{\ }$ be a graded Fr\'{e}chet
space.

Recall that a linear map $L:\mathbb{F}_{1}\to \mathbb{F}_{2}$ is \emph{continuous}\index{continuous linear map}\index{linear map!continuous} if
\[
\forall n\in\mathbb{N},\exists k_{n}\in\mathbb{N},\exists C_{n}>0:\forall
x\in\mathbb{F}_{1},\nu_{2}^{n}\left(  L.x\right)  \leq C_{n}\nu_{1}^{k_{n}%
}\left(  x\right).
\]
The space $\mathcal{L}\left(  \mathbb{F}_{1},\mathbb{F}_{2}\right)  $ of
continuous linear maps between both these Fr\'{e}chet spaces generally drops
out of the Fr\'{e}chet category. Indeed, $\mathcal{L}\left(  \mathbb{F}%
_{1},\mathbb{F}_{2}\right)  $ is a Hausdorff locally convex topological vector
space whose topology is defined by the family of semi-norms $\left\{
p_{n,B}\right\}  $:%
\[
p_{n,B}\left(  L\right)  =\displaystyle\sup_{x\in
	B}\left\{  \nu^{2}_{n}\left(  L.x\right) \right\}
\]
where $n\in\mathbb{N}$ and $B$ is any bounded subset of $\mathbb{F}_1$. This topology is not metrizable since the
family $\left\{  p_{n,B}\right\}  $ is not countable.\\
So $\mathcal{L}\left(  \mathbb{F}_{1},\mathbb{F}_{2}\right)  $ will be replaced,
under certain assumptions, by a projective limit of appropriate functional
spaces as introduced in \cite{Gal2}.

We denote by $\mathcal{L}\left(  \mathbb{B}_{1}^{n},\mathbb{B}_{2}^{n}\right) $ the space of linear continuous maps (or equivalently bounded
linear maps because $\mathbb{B}_{1}^{n}$ and $\mathbb{B}_{2}^{n}$ are normed
spaces). We then have the following result (\cite{DGV}, Theorem 2.3.10).

\begin{theorem}
\label{T_HF1F2}
The space of all continuous linear maps between $\mathbb{F}_{1}$ and $\mathbb{F}_{2}$ which can be represented as projective limits
\[
	\mathcal{H}\left(  \mathbb{F}_{1},\mathbb{F}_{2}\right)  =\left\{  \left(
	L_{n}\right)  \in\prod\limits_{n\in\mathbb{N}}\mathcal{L}\left(
	\mathbb{B}_{1}^{n},\mathbb{B}_{2}^{n}\right)  :\underleftarrow{\lim}%
	L_{n}\text{ exists}\right\}
\]
\index{$\mathcal{H}\left(  \mathbb{F}_{1},\mathbb{F}_{2}\right)$}is a Fr\'{e}chet space.
\end{theorem}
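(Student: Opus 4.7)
The plan is to realize $\mathcal{H}(\mathbb{F}_1,\mathbb{F}_2)$ as a closed linear subspace of the countable product $\mathcal{P}:=\prod_{n\in\mathbb{N}}\mathcal{L}(\mathbb{B}_1^n,\mathbb{B}_2^n)$ of Banach spaces and invoke the classical fact that a closed subspace of a Fréchet space is Fréchet. First I would record that $\mathcal{P}$, equipped with the countable family of seminorms $q_n\bigl((L_k)_k\bigr)=\|L_n\|^{\mathrm{op}}$, is a Fréchet space: the family is clearly separating, and completeness follows coordinate-wise from the Banach completeness of each $\mathcal{L}(\mathbb{B}_1^n,\mathbb{B}_2^n)$.

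Next I would make the projective-limit compatibility condition explicit. The gradings of $\mathbb{F}_1$ and $\mathbb{F}_2$ produce canonical continuous linking maps $\lambda_n^{n+1}:\mathbb{B}_1^{n+1}\to\mathbb{B}_1^n$ and $\mu_n^{n+1}:\mathbb{B}_2^{n+1}\to\mathbb{B}_2^n$, and by the general criterion recalled in Appendix \ref{__ProjectiveLimitsOfBanachSpaces}, the sequence $(L_n)\in\mathcal{P}$ admits a well-defined projective limit $\underleftarrow{\lim}L_n:\mathbb{F}_1\to\mathbb{F}_2$ if and only if
\[
\mu_n^{n+1}\circ L_{n+1}=L_n\circ\lambda_n^{n+1}\qquad\text{for every }n\in\mathbb{N}.
\]
Thus $\mathcal{H}(\mathbb{F}_1,\mathbb{F}_2)$ is exactly the set of $(L_n)\in\mathcal{P}$ cut out by this countable family of linear equations.

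Then I would verify that this set is closed in $\mathcal{P}$. For each $n$, the map
\[
\Phi_n:\mathcal{P}\longrightarrow\mathcal{L}(\mathbb{B}_1^{n+1},\mathbb{B}_2^n),\qquad (L_k)_k\longmapsto \mu_n^{n+1}\circ L_{n+1}-L_n\circ\lambda_n^{n+1},
\]
is linear, and continuous with respect to the seminorms $q_n,q_{n+1}$ on $\mathcal{P}$, because left/right composition with a fixed bounded operator is norm-continuous between operator-norm Banach spaces. The subspace $\mathcal{H}(\mathbb{F}_1,\mathbb{F}_2)=\bigcap_n\ker\Phi_n$ is therefore a closed linear subspace of $\mathcal{P}$, hence a Fréchet space for the induced topology, which coincides with the natural projective-limit topology generated by the restricted seminorms $q_n$.

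The only subtle point, which I would treat carefully, is the equivalence between "$\underleftarrow{\lim}L_n$ exists" in the sense of the statement and the explicit compatibility condition above; once this is clarified via the universal property of projective limits of Banach spaces (Appendix \ref{__ProjectiveLimitsOfBanachSpaces}), the rest is routine. No obstacle of substance is expected: the argument reduces to closedness of a system of continuous linear equations in a countable product of Banach spaces.
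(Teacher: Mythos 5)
Your argument is correct: realizing $\mathcal{H}\left(\mathbb{F}_{1},\mathbb{F}_{2}\right)$ as the intersection of the kernels of the continuous linear maps $\Phi_n$ inside the Fr\'echet product $\prod_{n}\mathcal{L}\left(\mathbb{B}_{1}^{n},\mathbb{B}_{2}^{n}\right)$, and noting that the consecutive compatibility conditions characterize existence of $\underleftarrow{\lim}L_n$ (using density of the image of $\mathbb{F}_1$ in each local Banach space $\mathbb{B}_1^n$), is exactly the standard proof. The paper itself gives no proof of this statement but quotes it from \cite{DGV} (Theorem 2.3.10), where the argument is essentially the closed-subspace-of-a-countable-product one you describe.
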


For this sequence $\left(  L_{n}\right) _{n \in \mathbb{N}}  $ of linear maps, for any integer
$0\leq n\leq m$, the following diagram is commutative
\[
\xymatrix{
           \mathbb{B}_{1}^{n}  \ar@{<-}[r]^{\;\;\;{(\delta_1)}_{n}^{m}} \ar[d]_{L_n} & \mathbb{B}_{1}^{m} \ar[d]^{L_m}\\
           \mathbb{B}_{2}^{n} \ar@{<-}[r]^{\;\;\;{(\delta_2)}_{n}^{m}}               &  \mathbb{B}_{2}^{m}\\
}
\]
On $\mathcal{H}\left(  \mathbb{F}_{1},\mathbb{F}_{2}\right) $, the topology can be defined by the sequence of seminorms $p_n$ given by
\[
p_n\left(  L\right)  =\displaystyle\max_{0\leq k\leq n} \sup\left\{  \nu^{2}_{k}\left(  L.x\right)  ,x\in \mathbb{F}_1,\; \nu^1_k(x)=1
\right\}
\]
so that $\left(\mathcal{H}\left(  \mathbb{F}_{1},\mathbb{F}_{2}\right) ,p_n\right)$ is a graded Fr\'echet space.

\begin{remark}
	\label{R_SeminormEquivalentpn}
	 For $l\in\left\lbrace 1,2 \right\rbrace$ , given a graduation $ \left( \nu^l_n \right)$  on a Fr\'echet space $\mathbb{F}_l$, let $\mathbb{B}_l^n$ be  the associated local Banach space and $\delta_l^n:\mathbb{F}_l\to \mathbb{B}_l^n$ the canonical projection.\\
	 The quotient norm  $\tilde{\nu}^l_n$  associated to $\nu^l_n$ is defined by
	\begin{eqnarray}
	\label{eq_DefTildenu}
	\tilde{\nu}^l_n(\delta_n(z))=\sup\{\nu^l_n(y):\; \delta_n(y)=\delta_n(z) \}.
	\end{eqnarray}
	We denote by $(\tilde{\nu}^2_n)^{\operatorname{op}}$ the corresponding operator norm on $\mathcal{L}(\mathbb{B}_1^n,\mathbb{B}_2^n)$.\\
	If $L= \underleftarrow{\lim}L_n$ where $L_n:\mathbb{B}_1^n\to\mathbb{B}_2^n$, then we have
	\[
	(\tilde{\nu}^2_n)^{\operatorname{op}}(L_n)=\sup\{\tilde{\nu}^2_n(L_n.x),\;\;x\in \mathbb{B}_1^n\;\; \tilde{\nu}^1_n(x)\leq 1\}=\sup\{\nu^2_n(L.x), x\in \mathbb{F}_1, \nu^1(x)\leq 1\}.
	\]
	This implies that
	\[
	p_n(L)=\displaystyle\max_{0\leq i\leq n}(\tilde{\nu}^2_i)^{\operatorname{op}}(L_n).
	\]
\end{remark}

\begin{definition}
	\label{D_UniformlyBoundedOperator}
	Let $(\mathbb{F}_{1},\nu^1_n)$ and  $(\mathbb{F}_{2},\nu^2_n)$ be  graded Fr\'{e}chet spaces.
	A linear map $L:\mathbb{F}_{1}\to \mathbb{F}_{2}$ is called a uniformly bounded operator\index{uniformly bounded operator}\index{operator!uniformly bounded}, if
	\[
	\exists C>O : \forall n\in \mathbb{N}, \; \nu_n(L(x))\leq C\mu_n(x).
	\] 	
\end{definition}

We denote by $\mathcal{H}_b\left(  \mathbb{F}_{1},\mathbb{F}_{2}\right) $\index{$\mathcal{H}_b\left(  \mathbb{F}_{1},\mathbb{F}_{2}\right) $} the set of uniformly bounded operators. Of course $\mathcal{H}_b\left(  \mathbb{F}_{1},\mathbb{F}_{2}\right) $ is contained in $\mathcal{H}\left(  \mathbb{F}_{1},\mathbb{F}_{2}\right)$ and $L\in\mathcal{H}\left(  \mathbb{F}_{1},\mathbb{F}_{2}\right)$  belongs to $\mathcal{H}_b\left(  \mathbb{F}_{1},\mathbb{F}_{2}\right) $ if and only if $\displaystyle\sup_{n\in \mathbb{N}} p_n(L)<\infty$ and so
\[
\mathcal{H}_b\left(  \mathbb{F}_{1},\mathbb{F}_{2}\right) =\left[\mathcal{H}\left(  \mathbb{F}_{1},\mathbb{F}_{2}\right)\right]_b.
\]
When $\mathbb{F}=\mathbb{F}_1=\mathbb{F}_2$ and $\nu^1_n=\nu^2_n$ for all $n\in \mathbb{N}$, the set $\mathcal{H}\left(  \mathbb{F},\mathbb{F}\right)$ (resp. $\mathcal{H}_b\left(  \mathbb{F},\mathbb{F}\right))$ is simply denoted $\mathcal{H}\left(  \mathbb{F}\right)$ (resp.  $\mathcal{H}_b\left(  \mathbb{F}\right)$). \\

We denote by $\mathcal{IH}_{b}\left(  \mathbb{F}_{1},\mathbb{F}_{2}\right) $\index{$\mathcal{IH}_{b}\left(  \mathbb{F}_{1},\mathbb{F}_{2}\right) $}  (resp. $\mathcal{SH}_{b}\left(  \mathbb{F}_{1},\mathbb{F}_{2}\right) $\index{$\mathcal{SH}_{b}\left(  \mathbb{F}_{1},\mathbb{F}_{2}\right) $}) the set of injective  (resp. surjective)  operators of $\mathcal{H}_b\left(  \mathbb{F}_{1},\mathbb{F}_{2}\right) $ with closed range.
\begin{proposition} (\cite{BCP})
\label{P_InjectiveSurjectiveBH}
${}$
\begin{enumerate}
		\item
 Each operator $L\in \mathcal{H}\left(  \mathbb{F}_{1},\mathbb{F}_{2}\right) $ has a closed range if and only if,  for each $n\in \mathbb{N}$, the induced operator $L_n:\mathbb{B}_1^n\to\mathbb{B}_2^n$ has  a closed range.
		\item
 $\mathcal{IH}_{b}\left(  \mathbb{F}_{1},\mathbb{F}_{2}\right) $ is an open subset of  $\mathcal{H}_b\left(  \mathbb{F}_{1},\mathbb{F}_{2}\right) $.
		\item
 $\mathcal{SH}_{b}\left(  \mathbb{F}_{1},\mathbb{F}_{2}\right) $ is an open subset of  $\mathcal{H}_b\left(  \mathbb{F}_{1},\mathbb{F}_{2}\right) $.
\end{enumerate}
\end{proposition}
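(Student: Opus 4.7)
The plan is to prove the three parts in sequence, with Parts (2) and (3) building on Part (1) and on classical open-mapping arguments between Banach spaces.

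For Part (1), I proceed by diagram chasing in the commutative squares $L_n \circ \delta_1^n = \delta_2^n \circ L$ that identify $L$ with the projective limit of its components. Assume first that each $L_n$ has closed range. Given a sequence $L(x_k) \to y$ in $\mathbb{F}_2$, projecting to level $n$ yields $L_n(\delta_1^n x_k) \to \delta_2^n y$ in the closed subspace $L_n(\mathbb{B}_1^n) \subset \mathbb{B}_2^n$; using compatibility of the bonding maps with the $L_n$, I assemble coherent preimages into a thread $z \in \mathbb{F}_1$ satisfying $L(z) = y$, so that $L$ has closed range. Conversely, if $L$ has closed range, I exploit the density of $\delta_l^n(\mathbb{F}_l)$ in $\mathbb{B}_l^n$ (from the reduced structure of the projective sequence) together with the commutative diagram to transfer closedness from $L(\mathbb{F}_1) \subset \mathbb{F}_2$ down to each $L_n(\mathbb{B}_1^n) \subset \mathbb{B}_2^n$.

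For Part (2), let $L \in \mathcal{IH}_b(\mathbb{F}_1, \mathbb{F}_2)$. I first use injectivity of $L$ combined with Part (1) to show that each induced factor map $\mathbb{B}_1^n / \ker L_n \to L_n(\mathbb{B}_1^n)$ is an isomorphism of Banach spaces, providing constants $c_n > 0$ encoding a levelwise bounded-below property of $L_n$ modulo its kernel. The crucial step is to promote these to a \emph{uniform} constant $c := \inf_n c_n > 0$; I plan to argue by contradiction, showing that $c_n \to 0$ would produce a thread in $\mathbb{F}_1$ that contradicts either the injectivity or the closedness of the range of $L$ at the Fréchet level. Once a uniform $c > 0$ is at hand, every $L' \in \mathcal{H}_b$ with $\sup_n (\tilde{\nu}^2_n)^{\operatorname{op}}(L_n - L'_n) < c$ remains bounded below at every level with constant at least $c - \|L_n - L'_n\|$, so each $L'_n$ is injective with closed range; Part (1) then gives $L' \in \mathcal{IH}_b$.

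For Part (3), the argument is parallel. Surjectivity of $L$ together with Part (1) forces each $L_n$ to be surjective (since $L_n(\mathbb{B}_1^n) \supseteq \delta_2^n(L(\mathbb{F}_1)) = \delta_2^n(\mathbb{F}_2)$ is dense in $\mathbb{B}_2^n$ and closed by Part (1)). The Banach open-mapping theorem then produces a right-inverse-type bound $d_n$ at each level, and the main task is to extract a uniform bound $d := \sup_n d_n < \infty$, by the same contradiction principle applied to the Fréchet-level surjectivity of $L$. A Neumann-series estimate then shows that every $L' \in \mathcal{H}_b$ with $\sup_n (\tilde{\nu}^2_n)^{\operatorname{op}}(L_n - L'_n) < 1/d$ remains surjective at every level, whence $L' \in \mathcal{SH}_b$.

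The principal obstacle in both (2) and (3) is the passage from levelwise to uniform bounds. Perturbations in the $\mathcal{H}_b$-topology are uniform in $n$, so without uniform constants $c > 0$ or $d < \infty$ controlling the levelwise invertibility of $L$, the Neumann-series and bounded-below estimates cannot be propagated to every level simultaneously. Extracting these uniform constants from the global Fréchet-space hypotheses, combined with the reduced structure of the projective sequence, is the heart of the argument.
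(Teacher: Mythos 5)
The paper gives no proof of this Proposition: it is quoted verbatim from \cite{BCP} (listed as ``in preparation''), so there is nothing internal to compare your argument against and it must stand on its own. It does not, for two concrete reasons.

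First, the perturbation step in your Part (2) is false as stated. If $\ker L_n\neq\{0\}$, the bounded-below constant $c_n$ you extract only controls $\nu_2^n(L_nx)\geq c_n\,d(x,\ker L_n)$, and this property is \emph{not} stable under small perturbations: take $\mathbb{B}_1^n=\mathbb{B}_2^n=H_1\oplus H_2$ a Hilbert space, $L_n$ the projection onto $H_1$, and $L'_n(x_1+x_2)=x_1+\varepsilon Tx_2$ with $T:H_2\to H_2$ compact, injective, with dense non-closed range; then $\Vert L'_n-L_n\Vert=\varepsilon\Vert T\Vert$ is arbitrarily small, yet $L'_n$ has non-closed range and its lower bound on the directions of $\ker L_n$ is of order $\varepsilon$, not $c-\varepsilon$. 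So your claim that $L'_n$ ``remains bounded below at every level with constant at least $c-\Vert L_n-L'_n\Vert$, so each $L'_n$ is injective with closed range'' only makes sense if each $L_n$ is itself injective and bounded below on all of $\mathbb{B}_1^n$ --- and injectivity of $L=\underleftarrow{\lim}L_n$ on the Fr\'echet space $\mathbb{F}_1$ does not give injectivity of the individual $L_n$ on the local Banach spaces. This reduction is missing and is not cosmetic.

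Second, the two steps you yourself identify as the heart of the matter are only announced, not carried out. In Part (1), ``assemble coherent preimages into a thread'' hides the fact that the fibres $L_n^{-1}(\delta_2^n y)$ form a projective system of affine subspaces whose projective limit can be empty; some Mittag--Leffler-type or surjectivity argument on the restricted bonding maps is required. Likewise the converse direction (``transfer closedness down via density'') is not an argument: projections of closed subspaces need not be closed. In Parts (2) and (3), the extraction of the uniform constants $c=\inf_n c_n>0$ and $d=\sup_n d_n<\infty$ is deferred to ``a contradiction principle,'' but the natural attempt produces witnesses $x_n\in\mathbb{B}_1^n$ living in different Banach spaces, and approximating each by an element of $\mathbb{F}_1$ with simultaneous control of all the seminorms is exactly the difficulty; nothing in the proposal explains how the reduced structure of the sequence resolves it. Until these points are supplied, the proposal is a plan rather than a proof.
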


We are in situation to end this section by the following result:

\begin{theorem}(\cite{BCP})
\label{T_UniformlyBoundedProperties}
${}$
\begin{enumerate}
\item
The Banach space $\mathcal{H}_b(\mathbb{F})$ has a Banach-Lie algebra structure and the set $\mathcal{GH}_b(\mathbb{F})$ of uniformly bounded isomorphisms of $\mathbb{F}$ is open in $\mathcal{H}_{b}(  \mathbb{F})$.
\item
$\mathcal{GH}_b(\mathbb{F})$ has a structure of  Banach-Lie group whose Lie algebra is $\mathcal{H}_b(\mathbb{F})$.
\item
If $\mathbb{F}$ is identified with the projective $\underleftarrow{\lim} \mathbb{B}^n$ we denote by $\exp_n:\mathcal{L}(\mathbb{B}_n) \to \mathcal{GL}(\mathbb{B}_n)$, then we a have a well defined smooth map $\exp:=\underleftarrow{\lim}\exp_n: \mathcal{H}_b(\mathcal{F}) \to \mathcal{GH}_b(\mathbb{F})$ which is a diffeomorphism from an open set of $0 \in \mathcal{H}_b(\mathcal{F})$ onto a a neighbourhood of $\operatorname{Id}_\mathbb{F}$.
\end{enumerate}
\end{theorem}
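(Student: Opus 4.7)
The plan is to establish Point (1) by showing that $\mathcal{H}_b(\mathbb{F})$ is a unital Banach algebra under composition, from which the Banach--Lie algebra structure follows by equipping it with the commutator bracket $[L,L']=L\circ L'-L'\circ L$. Concretely, for $L=\underleftarrow{\lim}L_n$ and $L'=\underleftarrow{\lim}L'_n$ in $\mathcal{H}_b(\mathbb{F})$, the composition $L\circ L'=\underleftarrow{\lim}(L_n\circ L'_n)$ satisfies $\|L_n\circ L'_n\|^{\operatorname{op}}\leq \|L_n\|^{\operatorname{op}}\|L'_n\|^{\operatorname{op}}\leq \|L\|_b\|L'\|_b$ uniformly in $n$, so $L\circ L'\in \mathcal{H}_b(\mathbb{F})$ with $\|L\circ L'\|_b\leq \|L\|_b\|L'\|_b$. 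For the openness of $\mathcal{GH}_b(\mathbb{F})$, I would invoke the classical Neumann series argument adapted to this Banach-algebra setting: if $L\in \mathcal{GH}_b(\mathbb{F})$ (meaning both $L$ and $L^{-1}$ are uniformly bounded) and $K\in\mathcal{H}_b(\mathbb{F})$ satisfies $\|K\|_b<\|L^{-1}\|_b^{-1}$, then $L+K=L(\operatorname{Id}+L^{-1}K)$ with $\|L^{-1}K\|_b<1$, and $(\operatorname{Id}+L^{-1}K)^{-1}=\sum_{k\geq 0}(-L^{-1}K)^k$ converges absolutely in $\mathcal{H}_b(\mathbb{F})$, producing a uniformly bounded inverse $(L+K)^{-1}=(\operatorname{Id}+L^{-1}K)^{-1}L^{-1}$ for $L+K$.

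Point (2) then follows immediately: as an open subset of the Banach space $\mathcal{H}_b(\mathbb{F})$, the set $\mathcal{GH}_b(\mathbb{F})$ inherits a Banach manifold structure with the inclusion as a global chart. The multiplication $(L,L')\mapsto L\circ L'$ is the restriction of a continuous bilinear map to the open product, hence real-analytic; inversion is locally given by the absolutely convergent Neumann series of the previous paragraph and is therefore also real-analytic. This endows $\mathcal{GH}_b(\mathbb{F})$ with a Banach--Lie group structure. The tangent space at $\operatorname{Id}_{\mathbb{F}}$ is canonically $\mathcal{H}_b(\mathbb{F})$, and a standard computation (identical to the Banach case, valid because all the relevant series converge in the $\|\cdot\|_b$ norm) shows that the Lie bracket induced by the conjugation action coincides with the commutator.

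For Point (3), the idea is to define $\exp(L):=\sum_{k\geq 0}\frac{L^k}{k!}$ directly inside the Banach algebra $\mathcal{H}_b(\mathbb{F})$; convergence is immediate from $\sum_k\frac{\|L\|_b^k}{k!}=e^{\|L\|_b}$. Because the canonical projection $L\mapsto L_n$ is a continuous algebra homomorphism from $\mathcal{H}_b(\mathbb{F})$ to $\mathcal{L}(\mathbb{B}^n)$, one gets $[\exp(L)]_n=\sum_{k\geq 0}\frac{L_n^k}{k!}=\exp_n(L_n)$, so the projective limit $\underleftarrow{\lim}\exp_n$ is well defined on $\mathcal{H}_b(\mathbb{F})$ and coincides with $\exp$. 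Its differential at $0$ equals $\operatorname{Id}_{\mathcal{H}_b(\mathbb{F})}$, so the Banach-space inverse function theorem yields an open neighbourhood $U$ of $0\in\mathcal{H}_b(\mathbb{F})$ on which $\exp$ is a diffeomorphism onto an open neighbourhood of $\operatorname{Id}_{\mathbb{F}}$ in $\mathcal{GH}_b(\mathbb{F})$.

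The main obstacle is bookkeeping rather than any deep new idea: at every step one must verify that the candidate operator remains in the \emph{uniformly bounded} subspace $\mathcal{H}_b(\mathbb{F})$ rather than merely in $\mathcal{H}(\mathbb{F})$, which relies systematically on the submultiplicativity of $\|\cdot\|_b$ and on the compatibility of the projective-limit structure with composition. A secondary delicate point is to check that the Lie bracket arising from the group structure really is the commutator; this is however standard Banach--Lie-group material, since once the exponential map has been constructed and shown to be a local diffeomorphism at the identity, the argument of the Banach case transfers verbatim.
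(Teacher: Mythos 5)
Your argument is essentially correct, but note that the paper itself gives no proof of this statement: Theorem \ref{T_UniformlyBoundedProperties} is quoted from the reference \cite{BCP} (listed as ``in preparation''), so there is nothing in the present text to compare against. Your route --- realize $\mathcal{H}_b(\mathbb{F})$ as a unital Banach algebra under composition with the submultiplicative norm $\sup_n p_n$, get openness of the invertibles by the Neumann series, the Lie group structure from analyticity of multiplication and inversion, and the exponential as the absolutely convergent power series whose $n$-th component is $\exp_n(L_n)$ --- is the standard and natural one, and each step goes through. Two points deserve to be made explicit. First, the Neumann and exponential series only land in $\mathcal{H}_b(\mathbb{F})$ because that space is \emph{complete} for $\|\cdot\|_b$; the theorem's wording presupposes this, but a self-contained proof should verify it (a $\|\cdot\|_b$-Cauchy sequence is Cauchy in each $\mathcal{L}(\mathbb{B}^n)$, the limits are compatible with the bonding maps by continuity, and the resulting operator is uniformly bounded). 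Second, your reading of $\mathcal{GH}_b(\mathbb{F})$ as the group of elements invertible \emph{within} the algebra (both $L$ and $L^{-1}$ uniformly bounded) is not just convenient but necessary: with the literal reading ``isomorphisms of $\mathbb{F}$ that happen to be uniformly bounded'', openness fails --- e.g.\ for $\mathbb{F}=\underleftarrow{\lim}\mathbb{R}^{n+1}$ the diagonal operator $\mathrm{diag}(1,\tfrac12,\tfrac13,\dots)$ is a uniformly bounded isomorphism whose inverse is not uniformly bounded, and arbitrarily small perturbations in $\|\cdot\|_b$ kill injectivity. It would strengthen your write-up to state this explicitly as the definition being used rather than as a parenthetical.
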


\section{A theorem of existence of ODE}

 The following result is in fact a reformulation in our context of Theorem 1 in \cite{Lob}.  
 
 \begin{theorem}
	\label{T_SolutionODEOnFrechetSpaces}
	Let $\mathbb{F}$ a Fr\'echet space realized as the limit of a surjective projective sequence of Banach spaces
	$ \left( \mathbb{B}_n,\lambda_n^m \right)_{m \geq n}$ whose topology is defined by the sequence of seminorms $\left( \nu_n\right)_{n \in \mathbb{N}}$.  Let $I$ be an open interval in $\mathbb{R}$ and $U$ be an open set of $I\times \mathbb{F}$. Then  $U$ is a surjective projective limit of open sets $U_n \subset I \times \mathbb{B}_n$. Consider a smooth map $f=\underleftarrow{\lim}f_n: U \to \mathbb{F}$, projective limit of maps $f_n: U_n\to \mathbb{B}_n$.
\footnote{This means that we have: $ \forall m\geq n, \; \lambda_n^m\circ f_m=f_n\circ (Id_\mathbb{R}\times \lambda_n^m)$}
	Assume that  for every point $\left( t,x) \right)\in U$, and every $n\in \mathbb{N} $, there exists an integrable function $K_n>0$ such that
	\begin{equation}
		\label{eq_LipschitzAssumption}
		\forall \left( (t,x), (t,x')\right) \in U^2, \; \nu_n(f(t,x)-f(t,x'))\leq K_n(t)\nu_n(x-x').
	\end{equation}
	and consider the differential equation:
	\begin{equation}
		\label{eq_ODEInFrechetSpace}
		\dot{x}=\phi \left( t,x \right).
	\end{equation}
	\begin{enumerate}
		\item
For any $(t_0,x_0)\in U$, there exists  $\alpha >0$  with $I_\alpha=[t_0-\alpha, t_0+\alpha]\subset I$, an open pseudo-ball $V=B(x_0,r)\subset U$ and a map $\Phi: I_\alpha\times I_\alpha \times V\to \mathbb{F}$ such that
		\[
		t\mapsto \Phi(t,\tau, x)
		\]
		is the unique solution of (\ref{eq_ODEInFrechetSpace})
		with initial condition $\Phi(\tau,\tau, x )=x$ for all $x\in V$.
		\item
$V$ is the  projective limit of the open  balls  $V_n$ of $\mathbb{B}_n$.  For each $n\in \mathbb{N}$, the curve $t\mapsto \lambda_n\circ \Phi(t,\tau, \lambda_n(x))$ is the unique solution $\gamma:I_\alpha\to \mathbb{B}_n$ of the differential equation $\dot{x}_n=\phi_n \left( t,x_n \right)$ with initial condition $\gamma(\tau)=\lambda_n(x)$.
	\end{enumerate}
\end{theorem}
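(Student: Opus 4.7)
The plan is a level-by-level Picard--Lindelöf, followed by passage to the projective limit. First, because $U=\underleftarrow{\lim}U_n$ is open in $I\times \mathbb{F}$, one can find an integer $N$, an $\alpha_1>0$ and a radius $\rho>0$ such that the tube $[t_0-\alpha_1,t_0+\alpha_1]\times B_{p_N}(x_0,\rho)$ lies in $U$, which translates into $I_{\alpha_1}\times B_{\tilde{\nu}_n}(x_{0,n},\rho)\subset U_n$ for $n\leq N$; the inclusion is then propagated to all $n\in\mathbb{N}$ using that the bonding maps $\lambda_n^m$ are norm-decreasing for the quotient norms $\tilde{\nu}_n$ (Appendix F, Remark~\ref{R_SeminormEquivalentpn}). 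The coherence relation $\lambda_n^m\circ f_m=f_n\circ(\operatorname{Id}\times\lambda_n^m)$ combined with the Lipschitz hypothesis (\ref{eq_LipschitzAssumption}) gives the estimate $\tilde{\nu}_n(f_n(t,y)-f_n(t,y'))\leq K_n(t)\,\tilde{\nu}_n(y-y')$ on each $U_n$, so at each level we are in the setting of classical Picard--Lindelöf on a Banach space.

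Next, for each $n$ the classical theorem on the Banach space $\mathbb{B}_n$ produces, for a suitable $0<\alpha_n\leq\alpha_1$, a unique smooth solution map
\[
\Phi_n:I_{\alpha_n}\times I_{\alpha_n}\times V_n\to \mathbb{B}_n,\qquad V_n=B_{\tilde{\nu}_n}(x_{0,n},\rho/2),
\]
of $\dot{y}=f_n(t,y)$ with $\Phi_n(\tau,\tau,y)=y$, provided the familiar sufficient conditions
\[
\alpha_n\cdot\sup_{I_{\alpha_1}\times V_n}\tilde{\nu}_n(f_n)<\rho/2\qquad\text{and}\qquad \int_{I_{\alpha_n}}K_n(s)\,ds<1
\]
are met. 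For $m\geq n$, applying $\lambda_n^m$ to the ODE at level $m$ and invoking uniqueness at level $n$ gives the crucial compatibility
\[
\lambda_n^m\circ\Phi_m(t,\tau,y_m)=\Phi_n(t,\tau,\lambda_n^m(y_m)),
\]
so the family $(\Phi_n)$ is a projective sequence of smooth maps in the sense of Proposition~\ref{P_ProjectiveLimitsOfDifferentialMaps}.

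The decisive step is to extract a single $\alpha>0$ valid for every $n$. The bounds $\sup_{V_n}\tilde{\nu}_n(f_n)$ and the integrals $\int_{I_\alpha}K_n$ can, a priori, depend on $n$; the plan is to control them simultaneously by exploiting that $f$ is defined on the open set $U$ of the Fréchet space and that the bonding maps are contractions. Since values of $f_n$ are obtained from values of $f$ through $\lambda_n$ (and $K_n(t)$ appears at level $n$), choosing $\rho$ small and then $\alpha$ small enough so that both inequalities hold at the bottom level $N$ forces them to hold at every $n$ above, because the quantities at level $n>N$ are bounded by those at $N$ pulled through the chain of bonding maps. This is the expected main obstacle: without the projective/contractive structure there would be no reason for the times $\alpha_n$ to stay bounded below, and all the machinery of Appendix F (uniformly bounded operators, the topology $p_n$) is introduced precisely to make this compatibility explicit.

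Once $\alpha$ is secured, define $V=\underleftarrow{\lim}V_n$ (the pseudo-ball of the statement) and
\[
\Phi=\underleftarrow{\lim}\Phi_n:I_\alpha\times I_\alpha\times V\to\mathbb{F}.
\]
By Proposition~\ref{P_ProjectiveLimitsOfDifferentialMaps}, $\Phi$ is smooth in the convenient sense and its differential is the projective limit of the $d\Phi_n$, so differentiating termwise yields $\dot{\Phi}(t,\tau,x)=\underleftarrow{\lim}f_n(t,\Phi_n(t,\tau,\lambda_n(x)))=f(t,\Phi(t,\tau,x))$ with the required initial condition. Uniqueness on $\mathbb{F}$ is inherited from uniqueness at each Banach level: two threads solving (\ref{eq_ODEInFrechetSpace}) with the same initial datum project to the same solutions of $\dot{y}_n=f_n(t,y_n)$ on each $\mathbb{B}_n$, and by classical uniqueness there they coincide level by level, hence in $\mathbb{F}$. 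This simultaneously establishes point (1) and the explicit projection identity of point (2).
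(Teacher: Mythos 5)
First, note that the paper does not actually prove this statement: it is presented as a reformulation of Theorem~1 of \cite{Lob}, so the benchmark is Lobanov's Picard iteration argument in the Fr\'echet space itself, not a level-by-level scheme.

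Your decisive step --- extracting a single $\alpha>0$ valid for all $n$ --- rests on a claim that runs in the wrong direction. The bonding maps $\lambda_n^m:\mathbb{B}_m\to\mathbb{B}_n$ ($m\geq n$) are norm-decreasing for the quotient norms, which means $\tilde{\nu}_n(\lambda_n^m(y))\leq\tilde{\nu}_m(y)$: quantities at a \emph{lower} level are dominated by those at a \emph{higher} level, not the reverse. Hence $\sup_{V_n}\tilde{\nu}_n(f_n)$ and the Lipschitz data $K_n$ are in general increasing in $n$ and are \emph{not} controlled by the level-$N$ quantities ``pulled through the chain of bonding maps''. With your sufficient conditions $\alpha_n\cdot\sup\tilde{\nu}_n(f_n)<\rho/2$ and $\int_{I_{\alpha_n}}K_n<1$, the existence times $\alpha_n$ may therefore tend to $0$, and $\bigcap_n I_{\alpha_n}$ collapses to $\{t_0\}$ --- exactly the shrinking-domain phenomenon of Remark~\ref{R_ProjectiveLimitTopologyFrechettoplogy} that the paper exploits in Example~\ref{Ex_NoDarbouxChart} to defeat Moser's method. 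A secondary instance of the same reversal: for $n>N$ the projective system only gives $U_n\subset(\operatorname{Id}\times\lambda_N^n)^{-1}(U_N)$, so the inclusion $I_{\alpha_1}\times B_{\tilde{\nu}_n}(x_{0,n},\rho)\subset U_n$ does not propagate upward either.

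The repair is essentially Lobanov's argument. Run a \emph{single} Picard iteration $x_{k+1}(t)=x+\int_\tau^t f(s,x_k(s))\,ds$ in $\mathbb{F}$. Since $U$ is open in the Fr\'echet topology, it contains a tube $I_\epsilon\times\{p_N(x-x_0)<\rho\}$ cut out by \emph{finitely many} seminorms; keeping all iterates in $U$ therefore only requires $\int_{I_\alpha}p_N(f(s,x_0))\,ds+\rho\int_{I_\alpha}\max_{n\leq N}K_n(s)\,ds<\rho$, a condition on level-$\leq N$ data alone, which fixes one $\alpha$ once and for all. Convergence in every seminorm $\nu_n$ on that \emph{fixed} interval then follows from (\ref{eq_LipschitzAssumption}) by the factorial estimate $\nu_n(x_{k+1}(t)-x_k(t))\leq C_n\bigl(\int_{I_\alpha}K_n\bigr)^{k}/k!$, which converges with no smallness condition on $\int K_n$; so no further shrinking occurs at higher levels. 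Your compatibility identity $\lambda_n^m\circ\Phi_m=\Phi_n\circ(\operatorname{Id}\times\operatorname{Id}\times\lambda_n^m)$ and the uniqueness argument are correct once this uniform interval is in place, and they do yield point (2).
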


\section{Proof of Proposition \ref{P_PropertiesWeakIsometricLinear2Forms}}\label{__ProofProposition12}

The proof of this Proposition \ref{P_PropertiesWeakIsometricLinear2Forms} needs the following Lemma :

\begin{lemma}
\label{L_EProductOfBanachSpaces}
Let $\mathbb{E}$ be a projective limit of a reductive projective sequence $\left( \mathbb{E}_i, \ell_i^j \right) _{j\geq i}$. Assume that, for all $(i,j)\in \mathbb{N}^2$ such that $j\geq i$, the kernel of $\ell_i^j$ is supplemented.
\begin{enumerate}
\item
For each $i\in \mathbb{N}$ and each $j\geq i$ we have a decomposition
\begin{eqnarray}\label{eq_DecompositionEl}
\mathbb{E}_j=\mathbb{E}_{i}^j\oplus\mathbb{E}_{i+1}^j\oplus\cdots \oplus\mathbb{E}_{j-1}^j\oplus \ker\ell_{j-1}^{j}
\end{eqnarray}
with the following properties for all $j \geq i$
\begin{enumerate}
\item[(a)]
$\ker\ell_l^j=\mathbb{E}_{l+1}^j\oplus\cdots \oplus\mathbb{E}_{j-2}^j\oplus \ker\ell_{j-1}^{j}$;
\item[(b)]
the restriction of  $(\ell')_l^j$ of $\ell_l^j$ to $(\mathbb{E}_{i}^j\oplus\cdots\oplus \mathbb{E}_{l}^j)$ is  injective with dense range in $\mathbb{E}_l$;
\item[(c)]
$\ell_l^j( \mathbb{E}_h^j)$ is dense in $ \mathbb{E}_h^l $  for all $ i\leq h\leq l$, is dense in $\ker\ell_{l-1}^l$  for $h=l$ and  $\ell_l^j( \mathbb{E}_h^j)=\{0\}$ for $l< h\leq j-1$.
\end{enumerate}
\item
Let $\ell_i:\mathbb{E}\to \mathbb{E}_i$ the canonical projection. Then $E=\ker\ell_i\oplus \mathbb{F}'_i$ and the restriction $\ell_i'$ of $\ell$ to $\mathbb{E}'_i$ is a continuous map injective map into $\mathbb{E}_i$ with dense range.  Moreover, if $||\;||_i$ is a norm on $\mathbb{E}_i$, then $\nu_i=||\;||_i\circ \ell_i$ is a semi-norm on $E$ and the restriction of  $\nu_i$ to $\mathbb{F}'_i$ is a norm and in this case, $\ell'_i$ is an isometry. In particular, the completion $\overline{\mathbb{F}'_i}$ of $\mathbb{F}'_i$ is isomorphic to $\mathbb{E}_i$.
\item
We set $\mathbb{K}_i=\ker\ell_{i-1}^i$ for $i \geq 1$ and $\mathbb{K}_0=\mathbb{E}_0$.  If $\mathbb{E}'_j=\displaystyle\prod_{l=0}^j\mathbb{K}_l$, then there exist bounding maps  $\kappa_i^j:\mathbb{E}'_j\to \mathbb{E}'_i$ with dense range, so that $\left( \mathbb{E}'_i,\kappa_i^j \right)_{j\geq i}$ is a reduced projective sequence. If $\mathbb{E}'=\underleftarrow{\lim}\mathbb{E}'_i$ there exists an injective continuous linear map $\underleftarrow{\lim}\theta_i:\mathbb{E}\to\mathbb{E}'$ with dense range where $\theta_i$ is an injective linear map from $\mathbb{E}_i$ into $\mathbb{E}'_i$ with dense range. Moreover, if $\left( \mathbb{E}_i, \ell_i^j \right) _{j \geq i}$ if a surjective projective sequence, then each $\theta_i$, $i\in \mathbb{N}$  and $\theta$ are isomorphisms.
\end{enumerate}
\end{lemma}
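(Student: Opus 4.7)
The plan is to prove (1) by induction on $j-i$, then derive (2) and (3) as consequences of the resulting structural decomposition.

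For part (1), fix $j\geq i$ and consider the nested flag
$$\ker\ell_{j-1}^j\subset \ker\ell_{j-2}^j\subset\cdots\subset \ker\ell_i^j\subset \mathbb{E}_j.$$
I would define $\mathbb{E}_l^j$ as a topological complement of $\ker\ell_l^j$ inside $\ker\ell_{l-1}^j$ (and inside $\mathbb{E}_j$ when $l=i$). Such complements exist: each $\ker\ell_l^j$ is supplemented in $\mathbb{E}_j$ by hypothesis, so if $P_l$ is a projector with kernel $\ker\ell_l^j$, its restriction to $\ker\ell_{l-1}^j$ is a projector whose range is a complement of $\ker\ell_l^j$ inside $\ker\ell_{l-1}^j$. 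Telescoping these splittings yields the decomposition (\ref{eq_DecompositionEl}) together with (a). Property (b) follows at once: $\mathbb{E}_i^j\oplus\cdots\oplus\mathbb{E}_l^j$ is by construction a complement of $\ker\ell_l^j$ in $\mathbb{E}_j$, so $\ell_l^j$ is injective on this sum, and its image is $\ell_l^j(\mathbb{E}_j)$, dense in $\mathbb{E}_l$ by reductivity.

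The main difficulty is property (c), which couples choices of complements across different $j$. To secure it, I would refine the induction on $j$ so that $\mathbb{E}_h^j$ is always chosen inside $(\ell_{j-1}^j)^{-1}(\mathbb{E}_h^{j-1})$ for $h<j-1$, and inside $(\ell_{j-1}^j)^{-1}(\ker\ell_{j-2}^{j-1})$ for $h=j-1$, while remaining a supplement of $\ker\ell_h^j$ in $\ker\ell_{h-1}^j$. The main obstacle is to show that such a compatible choice is possible: the supplementedness of the kernels provides continuous projections in $\mathbb{E}_{j-1}$ that allow one to transfer the density of $\ell_{j-1}^j(\mathbb{E}_j)$ in $\mathbb{E}_{j-1}$ to density of $\ell_{j-1}^j(\mathbb{E}_h^j)$ inside the prescribed summand $\mathbb{E}_h^{j-1}$ (or inside $\ker\ell_{l-1}^l$ in the diagonal case). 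Once the choice is made, iterating $\ell_l^j=\ell_l^{j-1}\circ \ell_{j-1}^j$ propagates the density statement to all pairs $l\leq h$, giving (c).

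For part (2), define
$$\mathbb{F}'_i=\{\,x=(x_j)_{j\geq i}\in\mathbb{E} \,:\, x_j\in \mathbb{E}_i^j \text{ for every } j\geq i\,\}.$$
The inclusions $\ell_{j-1}^j(\mathbb{E}_i^j)\subset\mathbb{E}_i^{j-1}$ built into the construction of (1) make $\mathbb{F}'_i$ a sub-projective limit, hence a closed subspace of $\mathbb{E}$. The decomposition (a) at $l=i$, namely $\mathbb{E}_j=\mathbb{E}_i^j\oplus\ker\ell_i^j$, assembles into the splitting $\mathbb{E}=\ker\ell_i\oplus\mathbb{F}'_i$. By (b) at $l=i$, the restriction $\ell_i'$ of $\ell_i$ to $\mathbb{F}'_i$ is injective with dense image in $\mathbb{E}_i$. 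The seminorm $\nu_i=\|\cdot\|_i\circ\ell_i$ on $\mathbb{E}$ vanishes exactly on $\ker\ell_i$, hence restricts to a genuine norm on $\mathbb{F}'_i$ for which $\ell_i'$ is an isometry onto its dense range, so the completion $\overline{\mathbb{F}'_i}$ is canonically isometric to $\mathbb{E}_i$.

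For part (3), the maps $\kappa_i^j:\mathbb{E}'_j\to\mathbb{E}'_i$ are the canonical surjective truncations of finite products, whence $(\mathbb{E}'_i,\kappa_i^j)_{j\geq i}$ is a reduced (in fact surjective) projective sequence with limit $\mathbb{E}'=\prod_{l=0}^{\infty}\mathbb{K}_l$. Using the decomposition of (1) based at index $0$, every $x\in\mathbb{E}_i$ writes uniquely as $x=x_0+\cdots+x_{i-1}+z_i$ with $x_l\in\mathbb{E}_l^i$ and $z_i\in\mathbb{K}_i$; define $\theta_i(x):=(\ell_0^i(x_0),\dots,\ell_{i-1}^i(x_{i-1}),z_i)$, which lands in $\mathbb{E}'_i$ because by (c) each $\ell_l^i(x_l)\in\mathbb{K}_l$. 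Injectivity follows from (b), and density of the image from the density statements in (c). Property (c) also gives $\kappa_i^j\circ\theta_j=\theta_i\circ\ell_i^j$, so $\theta=\underleftarrow{\lim}\theta_i$ is a well-defined continuous injection with dense range. When the sequence is surjective, each density in (c) is an equality; hence every $\theta_i$ is a continuous linear bijection between Banach spaces and so an isomorphism by the open mapping theorem, and passing to the projective limit preserves this.
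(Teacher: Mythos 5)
Your overall architecture is the same as the paper's: prove (1) by induction on $j$, tying the summands at level $j$ to those at level $j-1$ through $\ell_{j-1}^{j}$, then read off (2) and (3) from the resulting compatible decompositions. Your single-level construction of the $\mathbb{E}_l^j$ by splitting the flag $\ker\ell_{j-1}^j\subset\cdots\subset\ker\ell_i^j\subset\mathbb{E}_j$ with restricted projectors is correct (the projector onto a complement of $\ker\ell_l^j$ does map $\ker\ell_{l-1}^j$ into itself, since the kernel of the projector is contained in $\ker\ell_{l-1}^j$), and parts (2) and (3) are handled exactly as in the paper, including the definition of $\theta_i$ summand by summand and the intertwining relation $\kappa_i^j\circ\theta_j=\theta_i\circ\ell_i^j$.

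The genuine gap sits exactly where you put your finger: property (1)(c). You write that ``the main obstacle is to show that such a compatible choice is possible'' and then assert that continuous projections ``allow one to transfer the density'' of $\ell_{j-1}^j(\mathbb{E}_j)$ in $\mathbb{E}_{j-1}$ to density of $\ell_{j-1}^j(\mathbb{E}_h^j)$ in the prescribed summand. That transfer is not automatic and is the entire content of (c). What you need, e.g.\ in the diagonal case $h=j-1$, is that $\ell_{j-1}^j(\mathbb{E}_j)\cap\ker\ell_{j-2}^{j-1}$ be dense in $\ker\ell_{j-2}^{j-1}$; but a dense subspace of a Banach space $A\oplus B$ need not meet the closed complemented subspace $A$ in a dense subspace of $A$ (the graph of a discontinuous linear functional on $B$ is dense in $B\times\mathbb{R}$ yet meets $\{0\}\times\mathbb{R}$ only in $0$). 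Projecting the dense range onto $A$ along $B$ does give a dense subspace of $A$, but that projected set is not the intersection you need, so the projections do not close the argument. For comparison, the paper does not choose complements subject to constraints and then verify compatibility: it \emph{defines} the new summands as preimages, $\mathbb{E}_h^{j+1}:=\bigl[(\ell')_j^{j+1}\bigr]^{-1}(\mathbb{E}_h^{j})$, where $(\ell')_j^{j+1}$ is the restriction of $\ell_j^{j+1}$ to a chosen complement $\mathbb{F}_{j+1}$ of $\ker\ell_j^{j+1}$; the inclusion $\ell_j^{j+1}(\mathbb{E}_h^{j+1})\subset\mathbb{E}_h^{j}$ then holds by construction, and the remaining burden is to show that these preimages span $\mathbb{F}_{j+1}$ and have dense images. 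Your write-up must either adopt that definition and supply those verifications, or actually exhibit the compatible complements you postulate; as it stands, the density statements of (c) --- on which the dense-range claims of (2), the well-definedness and density of $\theta=\underleftarrow{\lim}\theta_i$ in (3), and hence the application to Proposition \ref{P_PropertiesWeakIsometricLinear2Forms} all rest --- are announced but not proved.
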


\begin{remark}
\label{R_ILB-case}
Note that, if $\left( \mathbb{E}_i, \ell_i^j \right) _{j \geq i}$ is a $\mathsf{ILB}$ sequence\footnote{ cf. Definition \ref{D_ILBManifold}}, the decomposition \ref{eq_DecompositionEl} is reduced to $\mathbb{E}_i^j=\mathbb{E}_i\cap \mathbb{E}_j=(\ell_i^j )^{-1}(\mathbb{E}_i )$. We have $\mathbb{E}'_i=\mathbb{E}_0$ for all $i\in\mathbb{N}$ and $\theta_i$ is the inclusion of $\mathbb{E}_i$ in $ \mathbb{E}_0$. Thus, in this case, the morphism $\theta:\mathbb{E} \to \mathbb{E}'$ is simply the injection of $\mathbb{E}=\displaystyle\bigcap_{i\in \mathbb{N}}\mathbb{E}_i$ into $\mathbb{E}_0$. This means that the only interesting context of Lemma \ref{L_EProductOfBanachSpaces} is when $\ker\ell_i^l$ is not reduced to zero for some pairs $(j,i)$ and $j\geq i$.
\end{remark}

\begin{proof}
Fix some $i\in \mathbb{N}$ and assume  that,  for all  $i\leq l \leq j$, we have a decomposition of type (\ref{eq_DecompositionEl}) with properties (b) and (c).\\
It is clear that this assumption is true for $l=i$ and $l=i+1$. At first,
 we have a decomposition
 \[
 \mathbb{E}_{j+1}=\ker\ell_j^{j+1}\oplus \mathbb{F}_{j+1}.
 \]
 Therefore the restriction $(\ell')_j^{j+1}$ of $\ell_j^{j+1}$ to $\mathbb{F}_{j+1}$ is an injective continuous map from $\mathbb{F}_{j+1}$ into $\mathbb{E}_j$ and  $(\ell')_j^{j+1}$ and $\ell_i^{j+1}$ have the same range, so $(\ell')_j^{j+1}(\mathbb{F}_{j+1})$ is dense in $\mathbb{E}_j$.
 Therefore,  according to (\ref{eq_DecompositionEl}), each vector space $\mathbb{K}_j^{j+1}=[(\ell')_j^{j+1}]^{-1}(\ker\ell_{j-1}^j)\;$, $\mathbb{K}_l^{j+1}= [(\ell')_j^{j+1})]^{-1}(\mathbb{K}_l^j)\;$ for all $i\leq l<j$
   are Banach subspaces of $\mathbb{F}_{j+1}$ and we have the following decomposition:
 \[
 \mathbb{F}_{j+1}=\mathbb{E}_i^{j+1}\oplus\mathbb{E}_{i+1}^{j+1}\oplus\mathbb{E}_{i+2}^{j+1}\oplus\cdots \oplus\mathbb{E}_{j}^{j+1}.
 \]
It follows that,  for $j > l > i$, we have
\begin{align*}
(\ell_j^{j+1})^{-1}(\ker \ell_l^j)  & =(\ell_j^{j+1})^{-1}(\mathbb{E}_{l+1}^j\oplus\cdots \oplus\mathbb{E}_{j-1}^j\oplus \ker\ell_{j-1}^j) \\
                                    & =\mathbb{E}_{l+1}^{j+1}\oplus\cdots \oplus\mathbb{E}_{j-1}^{j+1}\oplus \mathbb{E}_{j-1}^j\oplus \ker\ell_j^{j+1}
\end{align*}
and also:
\begin{align*}
\ell_l^{j+1}(\mathbb{E}_i^{j+1}\oplus\mathbb{E}_{i+1}^{j+1}\oplus\cdots\oplus \mathbb{E}_{j}^{j+1})  & =\ell_l^j(\ell_j^{j+1}(\mathbb{E}_i^{j+1}\oplus\mathbb{E}_{i+1}^{j+1}\oplus\cdots\oplus \mathbb{E}_{j}^{j+1}) \\
& =\ell_l^j(\mathbb{E}_i^{j}\oplus\mathbb{E}_{i+1}^{j}\oplus\cdots\oplus \mathbb{E}_{j-1}^{j}\oplus \ker\ell_{l-1}^l)\\
& =\ell_l^j(\mathbb{E}_j)
\end{align*}
which is dense in $\mathbb{E}_l$.\\
For $l=j$, we have
\[
\ell_j^{j+1}(\mathbb{E}_i^{j+1}\oplus\mathbb{E}_{i+1}^{j+1}\oplus\mathbb{E}_{i+2}^{j+1}\oplus\cdots \oplus\mathbb{E}_{j}^{j+1})
=\ell_j^{j+1}(\mathbb{E}_{j+1})
\]
which is dense in $\mathbb{E}_j$.
Finally, according to the definition of  $(\ell')_j^{j+1}$ and the definition $\mathbb{E}_h^{j+1}$ it follows that   $(\ell')_j^{j+1}$ is an injective continuous map from  $\mathbb{E}_h^{j+1}$ into  $\mathbb{E}_h^{j}$  with dense range for all $i\leq h<j+1$.
Thus (1) is proved.\\

For $i$ fixed, on the one hand,  the sequence $\left( \ker \ell_i^l,( \ell_l^j)_{|\ker\ell_i^j} \right) _{j\geq i}$ is a projective system of Banach spaces according to properties  (1) and (3) in the first part of Lemma \ref{L_EProductOfBanachSpaces}. Since $\ell_i=\underleftarrow{\lim}\ell_i^j$, then $\ker\ell_i^j=\underleftarrow{\lim}\ker\ell_i^j$ which is a closed Fr\'echet subspace of $\mathbb{E}$.

On the other hand, we have $\mathbb{E}_j=\ker\ell_i^j\oplus \mathbb{E}_i^j$ and from (3), the sequence $\left( \mathbb{E}_i^l, (\ell_l^j)_{| \mathbb{E}_i^j} \right) _{l\geq j}$ is a projective sequence of Banach spaces. Thus if $\mathbb{F}'_i=\underleftarrow{\lim}\mathbb{E}_i^j$, we have $E=\ker\ell_i\oplus \mathbb{F}'_i$. It follows that the restriction $\ell'_i$ of $\ell_i$ to $\mathbb{F}'_i$ is an isomorphism onto $\ell_i(\mathbb{F}'_i)$ which is dense in $\mathbb{E}_i$. If $||\;||_i$ is a norm on $\mathbb{E}_i$, then $\nu_i=||\;||\circ \ell_i$ is a semi-norm on $\mathbb{E}$ whose kernel is precisely $\ker\ell_i$. Thus, the restriction of $\nu_i$ to $\mathbb{F}'_i$ is a norm and so $\ell'_i$ is an isometry  which ends the proof of (2).

(3) We set $\mathbb{K}_l=\ker\ell_{l-1}^l$ and $\mathbb{E}'_j=\displaystyle\prod_{l=0}^j \mathbb{K}_l$. We consider the followings maps:
\begin{description}
\item
$ \theta_j: \mathbb{E}_j\to\mathbb{E}'_j$ defined in the following way for $i\in \mathbb{N}$:
according to the decomposition (\ref{eq_DecompositionEl}), each $x^j\in \mathbb{E}_j$ can be written as a sequence $ (x^j_l)_{0\leq l\leq j}$  where $x^j_l$ belongs to $\mathbb{E}^j_l$ and so  $\theta_j\left((x^j_l)\right)=\ell^j_l\left((x^j_l)\right) $  belongs to $\mathbb{E}'_i$ from property (3)
\footnote{Take $h=l$ in property (3)}
and so is well defined.
\item
$\kappa_i^j:\mathbb{E}'_j\to \mathbb{E}'_i$ defined in the following way:
if $\overline{x'}_i=(x'_0,\dots,x'_l\dots,x'_i)$ and $\overline{y'}_j=(y'_0,\dots,y'_l\dots,y'_j)$ then

 $\kappa_i^j\left(\overline{x'}_j=(x'_l)\right):=\left((\ell_l^j(x'_l)\right)$ for $0\leq l\leq i$ and for all $i\in \mathbb{N}$ and $j\geq i$.
\end{description}

Note that $\theta_j$ is a continuous linear map which is injective. Indeed, if $(x'_l)$ belongs to $\theta_j(\mathbb{E}_j)$ then $x'_l$ belongs to $\ell_l^j(\mathbb{E}_l^j)\subset \ker\ell_{l-1}^l$. The restriction $\ell_l^j$ to $\mathbb{E}_l^j$ being injective according to property (2), we have a unique $x_l^j\in \mathbb{E}_l^j$ such that $\ell_l^j(x_l^j)=x'_l$ 
for all $l \in \left\{ 0, \dots, j \right\} $. But, from property (3), $\ell_l^j(\mathbb{E}_l^j)$ is dense in $\mathbb{K}_l $, thus $\theta_j$ has a dense range in $\mathbb{E}'_j$. According to these notations, we can see that we have $\theta_i\circ \ell_i^j=\kappa_i^j\circ \theta_j$
It  follows that we get an injective continuous linear map $\theta=\underleftarrow{\lim}\theta_i$ from $\mathbb{E}=\underleftarrow{\lim}\mathbb{E}_i$ to $\mathbb{E}'=\underleftarrow{\lim}\mathbb{E}'_i$
But $\mathbb{E}$ can be identified with (cf. Appendix \ref{__ProjectiveLimitsOfTopologicalSpaces})
$$\left\{(x_i)\in \prod_{i\in \mathbb{N}}\mathbb{E}_i,\;:\; x^i_l=\ell_i^j(x^j_l),\;\ 0\leq l\leq i ,\; j\geq i,\; i\in \mathbb{N}\right\}$$
In the same way,   $\mathbb{E}'$ can be identified with
\[
\left\{(\overline{x'_i})\in \prod_{i\in \mathbb{N}}\mathbb{E}'_i,\;:\; x'_l=\kappa_i^j(x'_l),\; 0\leq l\leq i,\; j\geq i,\; i\in \mathbb{N}\right\}.
\]
This implies that $\theta$ is a continuous injective map from $\mathbb{E}$ to $\mathbb{E}'$ with dense range.

Now assume that  $ \ell_i^j$ is surjective for all $j\geq i$ and $\left( i,j \right) \in \mathbb{N}^2$. Then clearly this implies that $\kappa_i^j$ is also surjective and so $\theta_j$ is an isomorphism for all $j\geq i$. In this way, $\theta$ is also an isomorphism.

\end{proof}

\begin{proof}[Proof of Proposition \ref{P_PropertiesWeakIsometricLinear2Forms}] 
From our assumption, on  the sequence $(\omega_i)_ {i \in \mathbb{N}}$
and Proposition \ref{P_IsometricProperties}, we have a decomposition $\mathbb{E}_i=\ker\ell_{i-1}^i\oplus\mathbb{F}_i$ with  $\mathbb{F}_i=(\ker\ell_{i-1}^i)^\perp$ for all $i\geq 1$.
Thus we can apply lemma \ref{L_EProductOfBanachSpaces}. Thus,  for all $1\leq j\leq n$,  we have a  decomposition
\[
\mathbb{E}_{j}=\ker\ell_{j-1}^{j}\oplus \mathbb{F}_{j} \textrm{ with }
\mathbb{F}_{j}=\mathbb{E}_0^{j}\oplus \mathbb{E}_{1}^{j}\oplus\mathbb{E}_{2}^{j}\oplus\cdots \oplus\mathbb{E}_{j-1}^{j}.
\]

At first, $\omega_0$ is  a symplectic form  $\omega_{\mathbb{K}_0}$ on $\mathbb{K}_0=\mathbb{E}_0$. From the assumption  on  the sequence $(\omega_i)_ {i \in \mathbb{N}}$,  by induction on $i$,  $\omega_i$ induces a symplectic form $\omega_{\mathbb{K}_i}$ on $\mathbb{K}_i$. In this way, we obtain a symplectic form  $\omega'_i$ on $\mathbb{E}'_i$ by
\begin{eqnarray}\label{eq_omega'i}
\omega_{i}'(\overline{u'}_i,\overline{v'}_i)=\sum_{l=0}^i \omega_{\mathbb{K}_l}(u'_l, v'_l)
\end{eqnarray}
if $\overline{u'}_i=(u'_0,\dots u'_i)$ and $\overline{v'}_i=(v'_0,\dots v'_i)$.
According to the notations of the proof of Lemma \ref{L_EProductOfBanachSpaces}, for all $\left( i,j \right) \in \mathbb{N}^2$ such that $j\geq i$,  from the definition of $\kappa_i^j$ and $\theta_j$,  it is easy to see that
\begin{eqnarray}\label{eq_coherenceomega'}
{\omega_j}_{| \mathbb{K}_l} =\left((\kappa_i^j)^*\omega'_i\right)_{| \mathbb{K}_l}\,\;:\; \forall\; \; 0\leq l\leq i \textrm{ and }\omega_i=\theta_i^*\omega'_i.
\end{eqnarray}
Now the sequence $\left( \mathbb{E}'_i\times\mathbb{E}'_i,\kappa_i^j\times\kappa_i^j \right)  _{j\geq i}$ is a reduced projective system. Thus according to (\ref{eq_coherenceomega'}), we can define  a $2$-form $\omega'$ on $\mathbb{E}'$ in the following way:\\

if $\overline{u'}=(u'_l)\in E$ and $\overline{v'}=(v'_l)\in \mathbb{E}'$ then
$$\omega'(\overline{u'},\overline{v'})=\underleftarrow{\lim}\omega'_i\left((u'_0,\dots,u'_i),(v'_0,\dots,v'_i)\right)$$

It remains to show that $\omega'$ is symplectic. At first, note that by construction of  $\omega'_i$ we have $\omega'_i(u, v)=0$ for all $u\in \mathbb{K}_j$ and $v\in \displaystyle\prod_{l\not=j, 0\leq l\leq i}\mathbb{K}_l$. Assume $\omega'(\overline{u'},\overline{v'})=0$ for all $\overline{v'}\in \mathbb{E}'$ this implies that for all $l\in \mathbb{N}$, we have  $\omega_{\mathbb{K}_l}(u'_l, v'_l)=0$ for all $v_l\in \mathbb{K}_l$ and so we must have have $u'_l=0$ for all $l\in \mathbb{N}$.  Now, since  $\theta=\underleftarrow{\lim}\theta_i$ is injective, and $\omega_i=\theta_i^*\omega'_i$ and the range of $\theta$ is dense this implies the results for $\omega$.\\
\end{proof}


\begin{thebibliography}{9999}                                                                                             
\bibitem{A-PD}
J. C. \'Alvarez Paiva and C. E. Dur\'an
 \textbf{Isometric Submersions of Finsler Manifolds}
J. C. Álvarez Paiva and C. E. Durán
Proceedings of the American Mathematical Society
Vol. 129, No. 8 ( 2001), pp. 2409-2417 




\bibitem{AdFo}
R. A. Adams and J. J. F. Fournier \textit{Sobolev spaces},
Elsevier/Academic Press, Amsterdam (2003).

\bibitem{Bam}
D. Bambusi \textit{On the Darboux Theorem for weak
symplectic manifolds}, Proceedings AMS, Vol 127-N°11 (1999), pp. 3383-3391.


\bibitem{BCP}
D.~Belti{\c{t}}{\u{a}}, F.~Pelletier, P.~Cabau: Direct and Projective Limits of
  Geometric {B}anach Structures.
\newblock in preparation, 2020.

\bibitem{CaPe}
P. Cabau, F. Pelletier \textit{Projective and direct limits of Banach of $G$ and tensor structures}, arxiv.org/pdf/1901.09010.pdf  (to appear in 2020 in BGA.)



\bibitem{Car}
H. Cartan, \textit{Calcul diff\'erentiel}, Hermann, Paris, (1971).

\bibitem{DGV}
C.T.J. Dodson, G. Galanis, E. Vassiliou, \textit{Geometry in
a Fr\'echet context: a Projective Limit Approach}, Cambridge University Press (2015).



\bibitem{Eft1}
K. Eftekharinasab, \textit{Geometry of bounded Fr\'echet manifold},
Rocky Mountain Journal of Mathematics
vol. 46, no. 3 (2016), 895-913.



\bibitem{Eft2}
K. Eftekharinasab, \textit{On the generalization of the Darboux theorem}, Proceedings of the International Geometry Center, Vol 12, No. 2 (2019) 1-10




\bibitem{Gal1}
G.N. Galanis, \textit{Projective Limits of Banach-Lie groups},
Periodica Mathematica Hungarica \textbf{32} (1996) 179--191.

\bibitem{Gal2}
G.N. Galanis, \textit{Projective Limits of Banach Vector Bundles},
Portugaliae Mathematica \textbf{55} 1 (1998) 11--24.

\bibitem{Gal3}
G.N. Galanis, \textit{Differential and Geometric Structure for the
Tangent Bundle of a Projective Limit Manifold}, Rend. Sem. Univ. Padova
\textbf{112} (2004).

\bibitem{Glo}
H. Gl\"ockner, \textit{Implicit Functions from Topological Vector Spaces to Fr\'echet Spaces in the Presence
of Metric Estimates}, preprint, arXiv: math.FA/0612673v4 (2006)

\bibitem{KaSw}
N. J. Kalton, R. C. Swanson, \textit{A symplectic Banach
space with no Lagrangian subspaces}, Transactions AMS, vol 23-N°1 (1982), pp. 385-392.

\bibitem{KrMi}
A. Kriegel, P.W. Michor, \textit{The convenient Setting of
Global Analysis}, (AMS Mathematical Surveys and Monographs) \textbf{53} (1997).

\bibitem{Ku1}
P. Kumar, \textit{ Darboux chart  for a weak
symplectic Banach manifolds}, Int. J. Geom. Methods Mod. Phys. 12 N7 (2015).

\bibitem{Ku2}
P. Kumar, \textit{Darboux chart on projective limit of weak
symplectic Banach manifold}, Int. J. Geom. Methods Mod. Phys., 12, (2015).

\bibitem{Ku3}
P. Kumar, \textit{Existence of 'Darboux chart' on loop space}, arXiv:1309.2190 (2013).


\bibitem{Lob}S.G. Lobanov, \textit{Picard's theorem for ordinary differential equations in locally convex spaces}, zvestiya: Mathematics, Vol 41, N° 3 (1993)

\bibitem{Ma1}
J. E. Marsden \textit{Lectures on Geometric Methods in
Mathematical Physics}, SIAM, (1981).

\bibitem{Ma2}
J. E. Marsden \textit{Darboux's Theorem fails for weak
symplectic form}, Proceedings AMS Vol 32, N°2 (1972),  pp. 590-592.

\bibitem{MeSe}
N-G. Meyers, J Serrin \textit{ "H = W"} Proc. Nat. Acad.
Sci. U.S.A 51 (1964), 1055-1056., SIAM, (1981), pp.1055-1056.



\bibitem{Mos}
J. K. Moser, \textit{On the volume elements on a manifold}.
Trans. AMS, 120, 286-294 (1965), pp. 286-294.

\bibitem{Mul}
O. M\"uller, \textit{A metric approach to Fr\'echet geometry}, Journal of Geometry and physics 58 (2008), 1477-1500.

\bibitem{Omo}
H. Omori, \textit{Infinite-Dimensional Lie Groups}, Translations on Mathematical Monographs \textbf{158} Amer. Math. Soc. 1987.

\bibitem{Pe}
F. Pelletier, \textit{On Darboux theorem for symplectic forms on direct limits of symplectic Banach manifolds}.
International Journal of Geometric Methods in Modern Physics, 15, (2018)





\bibitem{Swa}
R. C. Swanson, \textit{ Linear symplectic structure on
Banach spaces}, Rocky Mountain Jour of Math, vol 10-N°2 (1980), pp. 305-318.



\bibitem{Wei}
A. Weinstein, \textit{Symplectic Manifolds and Their
Lagrangian Submanifolds}, Advances in Math vol 6 (1971), pp. 329-346.






\end{thebibliography}
\end{document}